\newtheorem{THM}{Theorem}[section]
\newtheorem{LEM}[THM]{Lemma}
\newtheorem{COR}[THM]{Corollary}
\newtheorem{PROP}[THM]{Proposition}
\theoremstyle{definition}
\newtheorem{EX}[THM]{Example}
\def\shift(#1)(#2){\!\!\downarrow\!{}^{#1}_{\raise .1ex\vbox to 0pt{\vss\hbox{$\scriptstyle #2$}}}\,}
\def\ucl(#1){\lfloor #1 \rfloor}
\def\dcl(#1){\lceil #1 \rceil}
\def\specrel#1#2{\mathrel{\mathop{\kern0pt #1}\limits_{#2}}}
\newcommand\B{\mathcal B}
\def\S{\mathcal S}
\def\curlyle{\preccurlyeq}
\def\lowfwd #1#2#3{{\mathop{\kern0pt #1}\limits^{\kern#2pt\raise.#3ex
\vbox to 0pt{\hbox{$\scriptscriptstyle\rightarrow$}\vss}}}}
\def\lowbkwd #1#2#3{{\mathop{\kern0pt #1}\limits^{\kern#2pt\raise.#3ex
\vbox to 0pt{\hbox{$\scriptscriptstyle\leftarrow$}\vss}}}}
\def\fwd #1#2{{\lowfwd{#1}{#2}{15}}}
\def\ve{\kern-1.5pt\lowfwd e{1.5}2\kern-1pt}
\def\vedash{{\mathop{\kern0pt e\lower.5pt\hbox{${}
     \scriptstyle'$}}\limits^{\kern0pt\raise.02ex
     \vbox to 0pt{\hbox{$\scriptscriptstyle\rightarrow$}\vss}}}}
\def\ev{\kern-1pt\lowbkwd e{0.5}2\kern-1pt}
\def\vf{\kern-2pt\lowfwd f{2.5}2\kern-1pt}
\def\vfdash{{\mathop{\kern0pt f\raise 1pt\hbox{${}
     \scriptstyle'$}}\limits^{\kern2pt\raise.02ex
     \vbox to 0pt{\hbox{$\scriptscriptstyle\rightarrow$}\vss}}}}
\def\vp{\lowfwd p{1.5}2}
\def\vr{\lowfwd r{1.5}2}
\def\rv{\lowbkwd r02}
\def\vrdash{{\mathop{\kern0pt r\lower.5pt\hbox{${}
     \scriptstyle'$}}\limits^{\kern0pt\raise.02ex
     \vbox to 0pt{\hbox{$\scriptscriptstyle\rightarrow$}\vss}}}}
\def\rvdash{{\mathop{\kern0pt r\lower.5pt\hbox{${}
     \scriptstyle'$}}\limits^{\kern0pt\raise.02ex
     \vbox to 0pt{\hbox{$\scriptscriptstyle\leftarrow$}\vss}}}}
\def\vrdashp{{\mathop{\kern0pt r_p\kern-4pt\lower.5pt\hbox{${}
     \scriptstyle'$}}\limits^{\kern0pt\raise.02ex
     \vbox to 0pt{\hbox{$\scriptscriptstyle\rightarrow$}\vss}}}\,}
\def\rvdashp{{\mathop{\kern0pt r_p\kern-4pt\lower.5pt\hbox{${}
     \scriptstyle'$}}\limits^{\kern0pt\raise.02ex
     \vbox to 0pt{\hbox{$\scriptscriptstyle\leftarrow$}\vss}}}\,}
\def\vrddash{{\mathop{\kern0pt r\lower.5pt\hbox{${}
     \scriptstyle''$}}\limits^{\kern0pt\raise.02ex
     \vbox to 0pt{\hbox{$\scriptscriptstyle\rightarrow$}\vss}}}}
\def\vrone{\lowfwd {r_1}12}
\def\vrtwo{\lowfwd {r_2}12}
\def\vs{\lowfwd s{1.5}1}
\def\sv{{{\lowbkwd s{1.5}1}\hskip-1pt}}
\def\vsdash{{\mathop{\kern0pt s\lower.5pt\hbox{${}
     \scriptstyle'$}}\limits^{\kern0pt\raise.02ex
     \vbox to 0pt{\hbox{$\scriptscriptstyle\rightarrow$}\vss}}}}
\def\svdash{{\mathop{\kern0pt s\lower.5pt\hbox{${}
     \scriptstyle'$}}\limits^{\kern0pt\raise.02ex
     \vbox to 0pt{\hbox{$\scriptscriptstyle\leftarrow$}\vss}}}}
\def\vsddash{{\mathop{\kern0pt s\lower.5pt\hbox{${}
     \scriptstyle''$}}\limits^{\kern0pt\raise.02ex
     \vbox to 0pt{\hbox{$\scriptscriptstyle\rightarrow$}\vss}}}}
\def\svddash{{\mathop{\kern0pt s\lower.5pt\hbox{${}
     \scriptstyle''$}}\limits^{\kern0pt\raise.02ex
     \vbox to 0pt{\hbox{$\scriptscriptstyle\leftarrow$}\vss}}}}
\def\vsdashp{{\mathop{\kern0pt s_p\kern-4pt\lower.5pt\hbox{${}
     \scriptstyle'$}}\limits^{\kern0pt\raise.02ex
     \vbox to 0pt{\hbox{$\scriptscriptstyle\rightarrow$}\vss}}}\,}
\def\svdashp{{\mathop{\kern0pt s_p\kern-4pt\lower.5pt\hbox{${}
     \scriptstyle'$}}\limits^{\kern0pt\raise.02ex
     \vbox to 0pt{\hbox{$\scriptscriptstyle\leftarrow$}\vss}}}\,}
\def\vsidash{{\mathop{\kern0pt s_i\kern-3.5pt\lower.3pt\hbox{${}
     \scriptstyle'$}}\limits^{\kern0pt\raise.02ex
     \vbox to 0pt{\hbox{$\scriptscriptstyle\rightarrow$}\vss}}}}
\def\vsp{\lowfwd {s_p}11}
\def\vsqdash{{\mathop{\kern0pt s_q\kern-3.5pt\lower.3pt\hbox{${}
     \scriptstyle'$}}\limits^{\kern0pt\raise.02ex
     \vbox to 0pt{\hbox{$\scriptscriptstyle\rightarrow$}\vss}}}}
\def\vS{{\hskip-1pt{\fwd S3}\hskip-1pt}} 
\def\vR{{\hskip-1pt{\fwd R3}\hskip-1pt}} 
\def\vSp{\lowfwd {S_p}11}
\def\vSstar{{\mathop{\kern0pt S\lower-1pt\hbox{$^*$}}\limits^{\kern2pt
     \vbox to 0pt{\hbox{$\scriptscriptstyle\rightarrow$}\vss}}}}
\def\vSdash{{\mathop{\kern0pt S\lower-1pt\hbox{${}
     \scriptstyle'$}}\limits^{\kern2pt\raise.1ex
     \vbox to 0pt{\hbox{$\scriptscriptstyle\rightarrow$}\vss}}}}
\def\vt{\lowfwd t{1.5}1}
\def\tv{\lowbkwd t{1.5}1}
\def\vO{\mathcal{O}}
\def\vOc{\overline{\mathcal{O}}}
\def\sub{\subseteq}
\newcommand\COMMENT[1]{}
\def\?#1{\vadjust{\vbox to 0pt{\vss\vskip-8pt\leftline{%
     \llap{\hbox{\vbox{\pretolerance=-1
     \doublehyphendemerits=0\finalhyphendemerits=0
     \hsize20truemm\tolerance=10000\small
     \lineskip=0pt\lineskiplimit=0pt
     \rightskip=0pt plus16truemm\baselineskip8pt\noindent
     \hskip0pt        
     #1\endgraf}\hskip2truemm}}}\vss}}}
\newenvironment{txteq*}
  {
    \begin{equation*}
    \begin{minipage}[c]{0.85\textwidth} 
    \em                                
  }
  {\end{minipage}\end{equation*}\ignorespacesafterend}
\title{Profinite tree sets}
 \author{Jakob Kneip} %
\renewcommand\S{\mathcal S}
\newcommand{\braces}[1]{\left(#1\right)}
\newcommand{\menge}[1]{\left\{#1\right\}}
\newcommand{\abs}[1]{\left |#1\right |}
\newcommand{\class}[1]{[#1]_D}
\newcommand{\tn}[1]{\textnormal{#1}}
\def\N{\mathbb{N}}
\newcommand{\lne}{\lneqq}
\def\lowfwd #1#2#3{{\mathop{\kern0pt #1}\limits^{\kern#2pt\raise.#3ex
\vbox to 0pt{\hbox{$\scriptscriptstyle\rightarrow$}\vss}}}}
\def\lowbkwd #1#2#3{{\mathop{\kern0pt #1}\limits^{\kern#2pt\raise.#3ex
\vbox to 0pt{\hbox{$\scriptscriptstyle\leftarrow$}\vss}}}}
\def\fwd #1#2{{\lowfwd{#1}{#2}{15}}}
\def\ve{\kern-1pt\lowfwd e{1.5}2\kern-1pt}
\def\ev{\kern-1pt\lowbkwd e{1.5}2\kern-1pt}
\def\vp{\lowfwd p{1.5}2}
\def\vr{\lowfwd r{1.5}2}
\def\rv{\lowbkwd r02}
\def\vu{\lowfwd u{1.5}2}
\def\uv{\lowbkwd u02}
\def\vrdash{{\mathop{\kern0pt r\lower.5pt\hbox{${}
     \scriptstyle'$}}\limits^{\kern0pt\raise.02ex
     \vbox to 0pt{\hbox{$\scriptscriptstyle\rightarrow$}\vss}}}}
\def\rvdash{{\mathop{\kern0pt r\lower.5pt\hbox{${}
     \scriptstyle'$}}\limits^{\kern0pt\raise.02ex
     \vbox to 0pt{\hbox{$\scriptscriptstyle\leftarrow$}\vss}}}}
\def\vrone{\lowfwd {r_1}12}
\def\vrtwo{\lowfwd {r_2}12}
\def\vs{\lowfwd s{1.5}1}
\def\sv{\lowbkwd s{1.5}1}
\def\vsidash{{\mathop{\kern0pt s_i\kern-3.5pt\lower.3pt\hbox{${}
     \scriptstyle'$}}\limits^{\kern0pt\raise.02ex
     \vbox to 0pt{\hbox{$\scriptscriptstyle\rightarrow$}\vss}}}}
\def\vS{{\hskip-1pt{\fwd S3}\hskip-1pt}} 
\def\vSr{{\vec S}_{\raise.1ex\vbox to 0pt{\vss\hbox{$\scriptstyle\ge\vr$}}}}
\def\vSdash{{\mathop{\kern0pt S\lower-1pt\hbox{${}
     \scriptstyle'$}}\limits^{\kern2pt\raise.1ex
     \vbox to 0pt{\hbox{$\scriptscriptstyle\rightarrow$}\vss}}}}
\def\vsdash{{\mathop{\kern0pt s\lower.5pt\hbox{${}
     \scriptstyle'$}}\limits^{\kern0pt\raise.02ex
     \vbox to 0pt{\hbox{$\scriptscriptstyle\rightarrow$}\vss}}}}
\def\svdash{{\mathop{\kern0pt s\lower.5pt\hbox{${}
     \scriptstyle'$}}\limits^{\kern0pt\raise.02ex
     \vbox to 0pt{\hbox{$\scriptscriptstyle\leftarrow$}\vss}}}}
\def\vtdash{{\mathop{\kern0pt t\lower0pt\hbox{${}
     \scriptstyle'$}}\limits^{\kern0pt\raise.1ex
     \vbox to 0pt{\hbox{$\scriptscriptstyle\rightarrow$}\vss}}}}
\def\tvdash{{\mathop{\kern0pt t\lower0pt\hbox{${}
     \scriptstyle'$}}\limits^{\kern0pt\raise.1ex
     \vbox to 0pt{\hbox{$\scriptscriptstyle\leftarrow$}\vss}}}}
\def\vddash{{\mathop{\kern0pt d\raise1pt\hbox{${}
     \scriptstyle'$}}\limits^{\kern0pt\raise.02ex
     \vbox to 0pt{\hbox{$\scriptscriptstyle\rightarrow$}\vss}}}}
\def\dvdash{{\mathop{\kern0pt d\raise1pt\hbox{${}
     \scriptstyle'$}}\limits^{\kern0pt\raise.02ex
     \vbox to 0pt{\hbox{$\scriptscriptstyle\leftarrow$}\vss}}}}
\def\vtstar{{\mathop{\kern0pt t\raise2.5pt\hbox{${}
     \scriptstyle*$}}\limits^{\kern0pt\raise.1ex
     \vbox to 0pt{\hbox{$\scriptscriptstyle\rightarrow$}\vss}}}}
\def\tvstar{{\mathop{\kern0pt t\raise2.5pt\hbox{${}
     \scriptstyle*$}}\limits^{\kern0pt\raise.1ex
     \vbox to 0pt{\hbox{$\scriptscriptstyle\leftarrow$}\vss}}}}
\def\vtstarD{{\mathop{\kern0pt t\kern.5pt\raise3pt\hbox{${}
     \scriptstyle*$}{\kern-5.5pt\lower3pt\hbox{$
     \scriptstyle D$}}}\limits^{\kern0pt\raise.1ex
     \vbox to 0pt{\hbox{$\scriptscriptstyle\rightarrow$}\vss}}}}
\def\tvstarD{{\mathop{\kern0pt t\kern.5pt\raise3pt\hbox{${}
     \scriptstyle*$}{\kern-5.5pt\lower3pt\hbox{$
     \scriptstyle D$}}}\limits^{\kern0pt\raise.1ex
     \vbox to 0pt{\hbox{$\scriptscriptstyle\leftarrow$}\vss}}}}
\def\vt{\lowfwd t{1.5}1}
\def\tv{\lowbkwd t{1.5}1}
\def\vO{\mathcal{O}}
\def\vOc{\overline{\mathcal{O}}}
\def\vm{\lowfwd m{1.5}1}
\def\mv{\lowbkwd m{1.5}1}
\def\vt{\lowfwd t{1.5}1}
\def\tv{\lowbkwd t{1.5}1}
\def\vl{\lowfwd l{1.5}1}
\def\vx{\lowfwd x{1.5}1}
\def\xv{\lowbkwd x{1.5}1}
\def\vy{\lowfwd y{1.5}1}
\def\yv{\lowbkwd y{1.5}1}
\def\vz{\lowfwd z{1.5}1}
\def\vd{\lowfwd d{1.5}1}
\def\dv{\lowbkwd d{1.5}1}
\def\vb{\lowfwd b{1.5}1}
\def\vE{\lowfwd E{1.5}1}
\def\vM{\lowfwd M{1.5}1}
\def\Mv{\lowbkwd M{1.5}1}
\def\invlim{\varprojlim\,\,}
\def\D{\mathcal{D}}
\def\ph{\varphi}
\def\sube{\subseteq}
\newcommand{\dplus}[1]{D^+(#1)}
\newcommand{\dminus}[1]{D^-(#1)}
\begin{document}
\abovedisplayshortskip=-3pt plus3pt
\belowdisplayshortskip=6pt

\maketitle

\begin{abstract}\noindent
Tree sets are posets with additional structure that generalize tree-like objects in graphs, matroids, or other combinatorial structures. They are a special class of abstract separation systems.

We study infinite tree sets and how they relate to the finite tree sets they induce, and obtain a characterization of infinite tree sets in combinatorial terms.
\end{abstract}

\section{Introduction}\label{sec:intro}

This paper is a sequel to, and assumes familiarity with, two earlier papers~\cite{AbstractSepSys,ProfiniteASS}. The first of these~\cite{AbstractSepSys} introduced finite abstract separation systems, whereas the latter~\cite{ProfiniteASS} laid the foundations for extending the theory of separation systems to a broad class of infinite separation systems.

The theory of abstract separation systems introduced in~\cite{AbstractSepSys} aims to generalize the notion of tangles, a notion originally invented and studied by Robertson and Seymour in~\cite{GMX} to capture highly connected objects or regions in graphs, from graphs to other types of highly cohesive regions in graphs, matroids, or other combinatorial structures. The fundamental idea of Robertson and Seymour in~\cite{GMX} was to describe dense objects in graphs not directly, say by specifying a set of vertices, but indirectly, by having each low-order separation of the graph point towards that object. In contrast to specifying a list of vertices, this indirect approach allows one to capture objects or regions in the graph that are highly-connected in a global sense but not locally. A typical example for such a region is a large grid in a graph: since every vertex of a grid has low degree, the grid cannot be said to be locally highly connected. However, a large grid still constitutes a dense and highly cohesive structure in a graph, as witnessed by the fact that a large grid forces high tree-width.

For tangles in graphs, Robertson and Seymour~\cite{GMX} proved two fundamental types of theorems: a {\em tree-of-tangles theorem}, which shows how to find a tree-like shape in the graph which displays all the different tangles in that graph, and a {\em tangle-tree duality theorem}, which shows that if a graph has no tangles (of a particular order), then the entire graph can be cut by low-order separations into a tree-like structure witnessing this absence of tangles. Both of these types of theorems can be established in the framework of abstract separation systems. In fact, for the special case of separations of graphs, Robertson and Seymour's original version of these theorem can be obtained from the generalized abstract theorems~\cite{ProfilesNew,TangleTreeAbstract}.

In~\cite{ProfiniteASS} the foundations were laid for extending the tree-of-tangles theorem and the tangle-tree duality theorem to infinite separation systems: \cite{ProfiniteASS} introduced, and studied, separation systems that are profinite -- i.e. which are determined by the finite separation systems they induce. For this class of separation systems it is then possible to establish a tree-of-tangles theorem and a tangle-tree-duality theorem by applying and lifting their finite versions using compactness arguments~\cite{duality1inf}.

The central object in both the tree-of-tangles theorem as well as the tangle-tree duality theorem for abstract separation systems, apart from the tangles themselves, is the structure of a {\em tree set}, a nested separation system without any trivial elements: the tree-of-tangles theorem finds a tree set which distinguishes a given set of tangles, and the tangle-tree duality theorem finds a tree set witnessing that there are no tangles. The understanding of extensions of these theorems from finite to profinite separation systems thus necessitate a thorough understanding of the properties of profinite tree sets.

A recurring question for profinite separation systems is the following: if every induced finite subsystem has a certain property, does this property carry over to the profinite system -- and conversely, if the profinite separation system has a certain structure, can its induced finite subsystems be assumed to have that structure, too? In~\cite{ProfiniteASS} affirmative answers to both parts of this question were given for two of the most basic properties of separation systems: nestedness and regularity. A profinite separation system is nested as soon as all its finite subsystems are; and every nested profinite separation system can be obtained from suitable finite nested systems. For the second part a straightforward compactness argument is used to show that, in fact, all relevant separations of almost all finite subsystems are nested. The same assertion, and indeed the same general argument, holds for regularity, too~\cite[Proposition~5.6]{ProfiniteASS}.

In this paper we give a positive answer to the above question for the structural property of being a tree set, that is, being nested and containing no trivial separations (see~\cite{AbstractSepSys,ProfiniteASS} for formal definitions). Concretely, we show the following:

\begin{restatable}{THM}{ThmTreesets}\label{thm:treesets}
	{\em
	\tn{ }
	\begin{enumerate}
	\item[\tn{(i)}] Every inverse limit of finite tree sets is a tree set.
	\item[\tn{(ii)}] Every profinite tree set is an inverse limit of finite tree sets.
	\end{enumerate}
	}
\end{restatable}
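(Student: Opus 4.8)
The plan is to prove the two parts separately, the first by a short compactness argument and the second by a more delicate one that must control trivial separations.

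\textbf{Part (i).} Write $\vS=\invlim\vS_i$ for finite tree sets $\vS_i$ with projections $\pi_i\colon\vS\to\vS_i$, so that $\pi_i$ factors through $\pi_j$ whenever $i\le j$. For nestedness, given $\vr,\vs\in\vS$ their images are nested in every $\vS_i$, so one of the four comparabilities between $\pi_i(\vr)$ and $\pi_i(\vs)$ holds; as there are only finitely many types and the index set is directed, one type recurs cofinally and hence holds at every coordinate, so it lifts to $\vS$. (This is exactly the nestedness transfer already available from \cite{ProfiniteASS}.) For the absence of trivial separations, suppose $\vs$ were trivial with witness $r$, so $\vs<\vr$ and $\vs<\rv$. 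Strictness gives $\pi_i(\vs)\ne\pi_i(\vr)$ and $\pi_j(\vs)\ne\pi_j(\rv)$ for some $i,j$; since unequal images stay unequal along the system, choosing $k\ge i,j$ yields $\pi_k(\vs)<\pi_k(\vr)$ and $\pi_k(\vs)<\pi_k(\rv)$. Then $\pi_k(\vs)$ is trivial in $\vS_k$, contradicting that $\vS_k$ is a tree set.

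\textbf{Part (ii).} Write the profinite tree set $\vS$ as $\invlim\vS_i$ over finite separation systems. By the nestedness transfer of \cite{ProfiniteASS} we may take each $\vS_i$ nested, and since a cofiltered limit of finite sets is unchanged when each term is replaced by the image of the limit, we may assume $\vS_i=\pi_i(\vS)$ with surjective bonding maps. Note first that every finite subset of $\vS$ closed under $*$ is itself a finite tree set: a trivial separation in such a subset would be witnessed inside $\vS$, which is impossible. Hence the sole obstruction to the $\vS_i$ being tree sets is the presence of \emph{spurious} trivial separations, that is, relations $\pi_i(\vs)<\pi_i(\vr)$ and $\pi_i(\vs)<\pi_i(\rv)$ holding in $\vS_i$ without $\vs<\vr$ and $\vs<\rv$ holding in $\vS$.

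Such spurious trivialities can be resolved one at a time. Fix $i$ and lifts $\vr,\vs\in\vS$ of a trivial pair in $\vS_i$. Since $\vS$ is a tree set and $s\ne r$ are nested, a short case check shows that the only genuine comparabilities compatible with $\pi_i(\vs)<\pi_i(\vr)$ and $\pi_i(\vs)<\pi_i(\rv)$ are $\vs\le\vr$ and $\vs\le\rv$, and these cannot both hold, as that would make $\vs$ trivial in $\vS$; so exactly one holds, say $\vs\le\vr$ while $\vs\not\le\rv$. Then $\pi_j(\vs)\not\le\pi_j(\rv)$ for some $j\ge i$, so this witness is destroyed in $\vS_j$. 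As $\vS_i$ is finite it carries only finitely many trivial configurations, so some $j\ge i$ destroys all of them at once. Deleting from each $\vS_i$ every trivial or degenerate separation then produces finite tree sets $\vT_i$, and it remains to assemble these into a single inverse system with limit $\vS$.

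This assembly is the main obstacle. Deleting trivial separations does not commute with the bonding maps -- a non-trivial separation of $\vS_j$ may map to a trivial one of $\vS_i$, while refining a coarse piece can spawn fresh trivialities -- so the $\vT_i$ need not form a subsystem, and a naive cofinal passage keeps reintroducing trivial separations. The crux is therefore a compactness argument controlling the existential behaviour of the witnesses uniformly: one must show that the projections of each separation of $\vS$ are non-trivial in cofinally many pieces, and that enough such good pieces can be selected coherently along a cofinal chain for their inverse limit to recover $\vS$. I expect this to be the hardest step, since -- unlike nestedness, a universally quantified relation among pairs that lifts by a single pigeonhole -- triviality is existentially quantified and its witnesses migrate through the system; taming them is precisely where the combinatorial characterization of tree sets must be brought to bear.
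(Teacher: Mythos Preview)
Your Part~(i) is correct and matches the paper's argument.

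Your Part~(ii), however, is not a proof: you correctly locate the difficulty and then stop. Worse, the line of attack you sketch for ``the hardest step'' is blocked by a known obstruction. You write that ``one must show that the projections of each separation of $\vS$ are non-trivial in cofinally many pieces.'' This is precisely what can fail: Example~5.7 of \cite{ProfiniteASS} (which the paper cites explicitly for this purpose) exhibits a profinite tree set containing a non-trivial separation whose projection to \emph{every} finite coordinate system is trivial. So no amount of passing to cofinal subsystems, deleting trivial elements, or ``destroying witnesses'' one at a time will work: the same separation keeps acquiring fresh witnesses of triviality at every level, and you cannot get rid of them inside the given inverse system. Your paragraph about resolving spurious trivialities at level $i$ by passing to some $j\ge i$ only handles a fixed pair of lifts; at level $j$ other preimages of the same element may still witness triviality, and indeed must do so in such examples.

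The paper therefore abandons the given inverse system entirely and builds new finite quotients of $\tau$ from scratch. For each finite ``selection'' $D$ of separations (meeting every splitting star it touches in at least two points) one defines an equivalence relation $\sim_D$ on $\tau$; the classes form a finite poset $\tau/D$, and under suitable hypotheses on $D$ (branch-closed) and on $\tau$ (chain-complete) this is a finite tree set. A carefully chosen directed family $\D$ of such selections then gives an inverse system $(\tau/D\mid D\in\D)$ whose limit is isomorphic to $\tau$. The real work lies in verifying that every profinite tree set satisfies the structural hypotheses needed for this construction (chain-completeness, splittability, control on branching between separations), which yields a purely combinatorial characterisation of profinite tree sets as a by-product. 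None of this is visible from your proposal; the essential new idea is to manufacture finite tree-set quotients directly rather than to repair the coordinate systems one started with.
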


The first part of Theorem~\ref{thm:treesets} is similar to the first part of Proposition~\cite[Proposition~5.6]{ProfiniteASS}. However, the second part of Theorem~\ref{thm:treesets} is much more difficult to show. The reason for this is that the compactness arguments used for nestedness and regularity do not work for tree sets: as~\cite[Example~5.7]{ProfiniteASS} shows it is possible that a separation is non-trivial in the profinite tree set, but that each of its induced finite separations is trivial in the respective finite subsystem. This makes it impossible to obtain the profinite tree set from finite tree sets by simply passing to suitable finite subsystems, as in the proof of~\cite[Proposition~5.6]{ProfiniteASS}.

In order to prove Theorem~\ref{thm:treesets} we will formulate a way of breaking up a given profinite tree~$ \tau $ set into finitely many parts in such a way that these parts form a finite tree set. The technical details of this are somewhat involved and are laid out in Section~\ref{sec:distinguishing}. Following that, in Section~\ref{sec:proof}, we will show that a carefully selected family of these finite tree sets can be used to re-obtain the profinite tree set~$ \tau $. To get this re-assembly of $ \tau $ to work we shall need to assume some that~$ \tau $ has certain structural properties; thus, to finish our proof of Theorem~\ref{thm:treesets}, we then need to verify that all profinite tree sets indeed have these structural properties. In doing so we will also obtain a characterization of the profinite tree sets in purely combinatorial terms.

Finally, in Section~\ref{sec:representations}, we apply the knowledge gained in the previous sections to extend certain representation theorems to profinite tree sets. In those theorems we seek to represent a tree set $ \tau $ as a separation system of bipartitions of a suitable groundset. This is easy to do in principle (see~\cite{TreeSets}), but becomes a challenging problem when one wants to minimize the groundset used for the representation.

\section{Separation Systems}\label{sec:separations}

For definitions and a basic discussion of abstract separation systems as well as of tree sets we refer the reader to \cite{AbstractSepSys} and \cite{TreeSets}. Additionally, we shall use the following terms.

We call a separation {\em co-small\/} if its inverse is small. If an oriented separation $\vr$ is trivial and this is witnessed by some separation~$s$, we also call the orientations $\vs,\sv$ of~$s$ {\em witnesses\/} of the triviality of~$\vr$. If $ \sigma $ is a splitting star of some separation system $ \vS $, and $ \sigma $ has size at least three, we call $ \sigma $ a {\em branching star} of $ \vS $ and its elements {\em branching points} of $ \vS $.

We shall be using the following two lemmas from~\cite{AbstractSepSys}:

\begin{LEM}[Extension Lemma]\tn{\cite[Lemma 4.1]{AbstractSepSys}}\label{lem:extension}
	Let $S$ be a set of unoriented separations, and let $P$ be a consistent partial orientation of $S$.
	\begin{enumerate}\itemsep=0pt
		\item[\tn{(i)}]$P$ extends to a consistent orientation~$ O $ of $S$ if and only if no element of $P$ is co-trivial in $S$.
		\item[\tn{(ii)}]If $\vp$ is maximal in $P$, then $O$ in \tn{(i)} can be chosen with $\vp$ maximal in $O$ if and only if $\vp$ is nontrivial in $\vS$.
		\item[\tn{(iii)}]If $S$ is nested, then the orientation~$ O $ in \tn{(ii)} is unique.
	\end{enumerate}
\end{LEM}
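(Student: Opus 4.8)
The plan is to prove the Extension Lemma in three parts, matching statements (i)--(iii), building each on the previous. The underlying strategy throughout is to think of a consistent partial orientation $P$ as a set of ``decisions'' about how to orient separations, and to extend it greedily while respecting consistency.

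For part (i), the forward direction is immediate: if some $\vp\in P$ is co-trivial in $S$, witnessed by $s$, then any consistent orientation must orient $s$ so as to point the same way as $\vp$ on both orientations of $s$, which is impossible, so no consistent orientation can contain $\vp$, let alone extend $P$. For the converse, I would extend $P$ one separation at a time. The main device is the following: given a consistent partial orientation that does not yet orient some $s$, I want to add one of $\vs,\sv$ while preserving consistency and the no-co-triviality of the newly added element. The key observation is that at least one of the two orientations of $s$ can be added consistently; if adding $\vs$ created an inconsistency, this would force $\sv$ to be consistent, and one checks that co-triviality is exactly the obstruction that would rule out \emph{both} choices simultaneously. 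A transfinite induction (or an application of Zorn's Lemma to the poset of consistent partial orientations containing $P$ whose elements are not co-trivial, ordered by inclusion) then yields a maximal such orientation, which must be a full orientation $O$ of $S$.

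For part (ii), I would run the same extension process but take care to start with $\vp$ maximal and never add any separation $\vq$ with $\vq\ge\vp$ unless forced. The condition that $\vp$ is nontrivial in $\vS$ is precisely what guarantees no such forcing occurs: triviality of $\vp$ would mean some witness $s$ has both orientations above $\vp$, so that orienting $s$ consistently would place an element strictly above $\vp$ in $O$. Conversely, if $\vp$ is nontrivial, I can always orient each remaining $s$ so as not to place its chosen orientation above $\vp$, keeping $\vp$ maximal; this requires checking that the ``downward'' choice for each $s$ remains consistent, which again follows from consistency being obstructed only by co-triviality.

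For part (iii), under the assumption that $S$ is nested, uniqueness should follow because nestedness severely constrains how separations relate: for nested $S$, once the maximal elements of $O$ are fixed, every other element's orientation is determined by the requirement that it lie below some maximal element, and distinct orientations of the same separation cannot both satisfy the consistency-and-maximality constraints. I would argue that any two orientations $O,O'$ as in (ii) agree on every separation by comparing them element by element using the nestedness relation. The main obstacle I anticipate is part (i)'s converse direction: making the one-step extension argument fully rigorous, in particular verifying that co-triviality is the \emph{only} obstruction to extending consistently, requires a careful case analysis of how a newly oriented $\vs$ can fail consistency against the existing partial orientation, and handling the transfinite bookkeeping cleanly.
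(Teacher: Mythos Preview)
The paper does not prove this lemma: it is quoted verbatim from \cite[Lemma~4.1]{AbstractSepSys} and used as a black box, so there is no proof in the present paper to compare your proposal against. Your outline is essentially the standard argument (and matches the proof in \cite{AbstractSepSys}): Zorn's Lemma on consistent partial orientations for~(i), with the observation that co-triviality of $\vp$ is exactly the configuration in which \emph{both} orientations of the witness $s$ are inconsistent with $\vp$; the refinement for~(ii) that triviality of $\vp$ is exactly when some $s$ forces an element strictly above $\vp$; and for~(iii) that nestedness makes the down-closure of the maximal elements determine the whole orientation.

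One small point: your phrasing of the forward direction of~(i) (``orient $s$ so as to point the same way as $\vp$ on both orientations of $s$'') is garbled. The clean statement is that if $\pv$ is trivial with witness~$s$, then $\pv\lneqq\vs$ and $\pv\lneqq\sv$, so whichever orientation of $s$ lies in $O$ forms, together with $\vp$, an inconsistent pair. Otherwise your sketch is sound and no genuine idea is missing.
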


\goodbreak

Nested separation systems without degenerate or trivial elements are known as {\em tree sets\/}. A subset $\sigma\sub\vS$ {\em splits\/} a nested separation system~$\vS$ if $ \vS $ has a consistent orientation $ O $ such that $ O\subseteq\dcl(\sigma) $ and $ \sigma $ is precisely the set of maximal elements of~$ O $. Conversely, a consistent orientation $O$ of~$\vS$ {\em splits} (at~$\sigma$) if it is contained in the down-closure of the set~$\sigma$ of its maximal elements.

The consistent orientations of a finite nested separation system can be recovered from its splitting subsets by taking their down-closures, but infinite separation systems can have consistent orientations without any maximal elements.%
   \COMMENT{and which are, therefore, not the down-closure of their maximal elements, and hence not the down-closure of any splitting star}

\begin{LEM}\tn{\cite[Lemmas 4.4, 4.5]{AbstractSepSys}}\label{Remark8}
	The splitting subsets of a nested separation system $\vS$ without degenerate elements are proper stars. Their elements are neither trivial nor co-trivial in~$\vS$. If $S$ has a degenerate element~$s$, then $\{\vs\}$ is the unique splitting subset of~$\vS$.
\end{LEM}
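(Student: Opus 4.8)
The plan is to establish the three assertions in turn, obtaining the first two directly from the interplay of maximality, consistency and nestedness, and dispatching the degenerate case by a separate short argument. For the first assertion, let $\sigma$ split $\vS$, witnessed by a consistent orientation $O$ with $\sigma=\max O$ and $O\sub\dcl(\sigma)$, and fix distinct $\vr,\vs\in\sigma$; since $\sigma\sub O$ and $O$ is an orientation, $r\neq s$. Being distinct maximal elements of $O$, neither of $\vr,\vs$ lies below the other, so they are incomparable. Nestedness of $\vS$ yields a comparability between some orientation of $r$ and some orientation of $s$; applying the involution, every such comparability that is compatible with the incomparability of $\vr,\vs$ reduces either to $\vr\le\sv$ (the star relation) or to $\rv\le\vs$. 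The latter is precisely the configuration that consistency of $O$ forbids — the inverse of the member $\vr$ lying below the member $\vs$ — so it cannot hold, leaving $\vr\le\sv$. Thus $\sigma$ is a star, and it is proper because $\vS$ has no degenerate elements and $\sigma\sub O$ contains no separation together with its inverse.

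For the second assertion, fix $\vr\in\sigma$. Were $\vr$ trivial, witnessed by some $t$, then $\vr<\vt$ and $\vr<\tv$; but $O$ contains one of $\vt,\tv$, which would then lie strictly above $\vr$ in $O$, contradicting the maximality of $\vr$ in $O$. Were $\vr$ co-trivial, then $\rv$ would be trivial, witnessed by some $t$ with $\rv<\vt$ and $\rv<\tv$; whichever of $\vt,\tv$ lies in $O$ then sits above $\rv$, so the inverse $\rv$ of the member $\vr\in O$ lies below another member of $O$, contradicting consistency. Hence no element of $\sigma$ is trivial or co-trivial.

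For the degenerate case, suppose $\vs=\sv$. A short case check using the involution shows that nestedness forces at least one of $\vr,\rv$ below $\vs$ for every $r$. Take any consistent orientation $O$ of $\vS$; one exists by Lemma~\ref{lem:extension}(i) applied to the empty partial orientation, which has no co-trivial element. Then $\vs\in O$, and for every other $\vr\in O$ the relation $\rv\le\vs$ is excluded by consistency, so the comparability just noted must read $\vr\le\vs$, whence $\vr<\vs$ as $r\neq s$. Therefore $\vs$ is the unique maximal element of $O$ and $O\sub\dcl(\vs)$, so $O$ splits at $\{\vs\}$; since this holds for \emph{every} consistent orientation, $\{\vs\}$ is a splitting subset and is the only one.

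The delicate point is this last part. A degenerate separation need not be the greatest element of $\vS$, so one may not simply declare it maximal; what makes the argument work is that consistency forces every other separation to be oriented strictly below $\vs$, which simultaneously pins $\{\vs\}$ down as the set of maximal elements of any consistent orientation and, through the Extension Lemma, secures the existence of such an orientation. By contrast, the first two assertions are immediate once one observes that maximality excludes the relations $\vr\le\vs$ and $\vs\le\vr$ among members of $\sigma$, while consistency excludes the anti-star relation $\rv\le\vs$.
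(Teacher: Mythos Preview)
Your argument is correct. The paper does not supply its own proof of this lemma; it is quoted verbatim from \cite[Lemmas~4.4, 4.5]{AbstractSepSys}, so there is nothing in the present paper to compare against. Your treatment of the three assertions---using maximality to rule out $\vr\le\vs$ and $\vs\le\vr$, consistency to rule out $\rv\le\vs$, and the Extension Lemma to produce a consistent orientation in the degenerate case---matches the standard derivation and is sound.

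One small remark on presentation: your justification of ``proper'' appeals to $\sigma$ not containing a separation together with its inverse, but the operative content of properness here is that $\sigma$ is an antichain of nondegenerate separations, which you have in fact already established via the incomparability of distinct maximal elements of~$O$. Also note that this paper deliberately avoids the symbol~$<$ in favour of~$\lne$ (meaning $\le$ together with $r\ne s$); your uses of~$<$ are unambiguous in context, but if you want to match the paper's conventions you should switch to~$\lne$.
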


The splitting stars of the edge tree set $\vE(T) $ of a tree~$ T$, for example, are the sets $\vec F_t$ of edges at a node~$t$ all oriented towards~$t$. By this correspondence, the nodes of~$T$ can be recovered from its edge tree set; if $T$ is finite, its nodes correspond bijectively to its consistent orientations.

Given two separation systems $ R,S $, a map $ f\colon \vR\to \vS $ is a {\em homomorphism} of separation systems if it commutes with their involutions and respects the ordering on~$\vR$. Formally, we say that $ f $ \textit{commutes with the involutions} of $\vR$ and~$\vS$ if $ \braces{f(\vr)}^*=f(\rv) $ for all ${\vr\in \vR}$. It \textit{respects the ordering} on~$\vR$ if $ f({\vrone})\le f({\vrtwo}) $ whenever $ {\vrone}\le{\vrtwo} $. Note that the condition for $ f $ to be order-respecting is not `if and only if': we allow that $ f({\vrone})\le f({\vrtwo}) $ also for incomparable~$ {\vrone},{\vrtwo}\in R $. Furthermore~$ f $ need not be injective. It can therefore happen that ${\vrone \le \vrtwo}$ with $ \vrone\ne\vrtwo $ but $f(\vrone) = f(\vrtwo)$, so $f$ need not preserve strict inequality. A~bijective homomorphism of separation systems whose inverse is also a homomorphism is an {\em isomorphism\/}.

We shall now prove two handy lemmas which provide sufficient conditions for a homomorphism of separation systems to be an isomorphism. These lemmas will be tailored towards their intended applications in Section~\ref{sec:proftreesets} but may be of some use in general.

As all trivial separations are small every regular nested separation system is a tree set. These two properties, regularity and nestedness, are preserved by homomorphisms of separations systems, albeit in different directions: the image of nested separations is nested, and the pre-image of regular separations is regular.

\begin{LEM}\label{beforeIso}
	Let $ f\colon \vR\to \vS $ be a homomorphism of separation systems. If $ S $ is regular then so is $ R $; and if $ R $ is nested then so is its image in~$ S $.
\end{LEM}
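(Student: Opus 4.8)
The plan is to prove the two assertions separately, each by unfolding the relevant definition and using only the defining properties of a homomorphism: that $f$ commutes with the involutions and respects the ordering.

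For the first assertion, suppose $S$ is regular; I want to show $R$ is regular, i.e. that no separation of $R$ is co-small, equivalently (depending on the convention in use) that no $\vr \in \vR$ satisfies $\vr \le \rv$. I would argue contrapositively: suppose some $\vr \in \vR$ witnesses non-regularity of $R$, say $\vr \le \rv$. Applying $f$ and using that it respects the ordering gives $f(\vr) \le f(\rv)$, and since $f$ commutes with the involutions we have $f(\rv) = (f(\vr))^*$, so $f(\vr) \le (f(\vr))^*$. Thus $f(\vr)$ witnesses that $S$ is not regular, contradicting the hypothesis. Hence $R$ is regular. This direction is essentially immediate from the two homomorphism axioms.

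For the second assertion, suppose $R$ is nested; I want to show the image $f(\vR) \subseteq \vS$ is nested, i.e. that any two elements of the image are nested. Take $f(\vrone), f(\vrtwo)$ in the image with $\vrone, \vrtwo \in \vR$. Since $R$ is nested, $\vrone$ and $\vrtwo$ are nested, meaning some orientation of $r_1$ is comparable with some orientation of $r_2$; say $\vrone \le \vrtwo$ after suitably orienting (and using that $f$ commutes with involutions to handle the four orientation cases uniformly). Applying the order-respecting property yields $f(\vrone) \le f(\vrtwo)$, and since $f$ commutes with involutions the corresponding inequality holds for the images of the other orientations as well. Therefore $f(\vrone)$ and $f(\vrtwo)$ are comparable, so they are nested, and the image is a nested separation system.

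The only subtlety I anticipate is bookkeeping in the nestedness argument: nestedness of $r_1, r_2$ is a statement about \emph{some} pair of their four orientations being comparable, so I must check that whichever orientations witness nestedness in $R$, their $f$-images witness nestedness in $S$. This is handled cleanly by the fact that $f$ commutes with the involutions, so that the image of each of the four orientations of $r_i$ is one of the four orientations of the image, and the comparability transports directly. I expect no genuine obstacle here — both halves follow formally from the two defining axioms of a homomorphism — so the main care is simply to state the orientation cases correctly rather than to overcome any structural difficulty.
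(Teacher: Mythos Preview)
Your proposal is correct and follows essentially the same approach as the paper's own proof: both argue the regularity claim contrapositively by pushing a small $\vr$ forward to a small $f(\vr)$, and both handle nestedness by transporting the witnessing comparability of orientations in $R$ along $f$. The only minor slip is terminological---writing ``co-small'' where you mean ``small'' in the phrase ``no separation of $R$ is co-small''---but since a system has a small element iff it has a co-small one, this does not affect the argument.
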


\begin{proof}
	First suppose some $ \vr\in \vR $ is small, that is $ \vr\le\rv $. But then
	\[ f(\vr)\le f(\rv)=\braces{f(\vr)}^*, \]
	so $ \vS $ contains a small element. Therefore if $ S $ is regular then $ R $ must be too.
	
	Now consider two unoriented separations $ s,s'\in S $. If there are $ r,r'\in R $ with $ s=f(r) $ and $ s'=f(r') $ and $ R $ is nested, then, say, $ \vr\le\vrdash $ and thus $ \vs:=f(\vr)\le f(\vrdash)=:\vsdash $. Hence if $ R $ is nested its image in $ S $ is nested too.
\end{proof}


Lemma~\ref{beforeIso} makes it possible to show that a homomorphism $ f\colon \vR\to \vS $ of separation systems is an isomorphism of tree sets without knowing beforehand that either $ R $ or $ S $ is a tree set:

\begin{LEM}\label{Isomorphism}
	Let $ f\colon \vR\to \vS $ be a bijective homomorphism of separation systems. If $ R $ is nested and $ S $ is regular then $ f $ is an isomorphism of tree sets.
\end{LEM}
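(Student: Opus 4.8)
The plan is to split the statement into two tasks: first, to show that both $R$ and $S$ are tree sets, and second, to show that the bijection $f$ reflects the ordering, so that $f^{-1}$ is again a homomorphism. For the first task I would simply invoke Lemma~\ref{beforeIso}. Since $S$ is regular, so is $R$; and since $R$ is nested, its image $f(R)$ in $S$ is nested, which equals $S$ because $f$ is onto. Hence $R$ is nested and regular and $S$ is nested and regular, so by the observation that every regular nested separation system is a tree set, both $R$ and $S$ are tree sets.

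It then remains to show that $f$ is an isomorphism, i.e.\ that $f^{-1}\colon\vS\to\vR$ is a homomorphism. As $f$ is a bijection commuting with the involutions, $f^{-1}$ commutes with them automatically, so the only thing left to verify is that $f^{-1}$ respects the ordering of $\vS$. Concretely, given $\vs\le\vsdash$ in $\vS$, I would set $\vr:=f^{-1}(\vs)$ and $\vrdash:=f^{-1}(\vsdash)$ and aim to prove $\vr\le\vrdash$.

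The crux is to use that $R$ is nested: the separations $r$ and $r'$ therefore have comparable orientations, so one of $\vr\le\vrdash$, $\vr\le\rvdash$, $\rv\le\vrdash$, $\rv\le\rvdash$ holds. Applying the order-respecting map $f$, and using that it commutes with the involutions, turns these into relations among $\vs,\sv,\vsdash,\svdash$, which I would combine with the hypothesis $\vs\le\vsdash$. In the second case this chains to $\vs\le\sv$, and in the third to $\svdash\le\vsdash$, each exhibiting a small separation of $S$ and so contradicting its regularity; in the fourth case it yields $\vsdash\le\vs$, whence $\vs=\vsdash$ by antisymmetry and therefore $\vr=\vrdash$ since $f$ is injective. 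Thus only the first case survives, giving $\vr\le\vrdash$ as required, and $f^{-1}$ respects the ordering.

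The main obstacle — and the reason the statement is not immediate — is exactly this reflection of the ordering: a homomorphism is only required to respect the ordering, not to reflect it, so $f^{-1}$ need not be order-respecting in general. The real work lies in showing that nestedness of $R$ reduces the possible orientation patterns of $r$ and $r'$ to just four, and that regularity of $S$ (together with the injectivity of $f$) rules out all but the correct one.
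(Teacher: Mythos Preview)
Your proof is correct and follows essentially the same approach as the paper's: invoke Lemma~\ref{beforeIso} to conclude that both $R$ and $S$ are regular nested (hence tree sets), then use nestedness of $R$ to reduce to four orientation cases and eliminate the unwanted ones via regularity of $S$ and injectivity of $f$. The only cosmetic difference is the labelling of the cases; the substance of the argument is identical.
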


\begin{proof}
	From Lemma~\ref{beforeIso} it follows that both $ R $ and $ S $ are regular and nested, which means they are regular tree sets. Therefore all we need to show is that the inverse of $ f $ is order-preserving, i.e. that $ \vr_1\le\vr_2 $ whenever $ f(\vr_1)\le f(\vr_2) $.
	Let $ \vr_1,\vr_2\in \vR $ with $ f(\vr_1)\le f(\vr_2) $ be given. Since $ R $ is nested $ r_1 $ and $ r_2 $ have comparable orientations.
	
	If $ \vr_1\ge\vr_2 $ then $ f(\vr_1)=f(\vr_2) $, implying $ \vr_1=\vr_2 $ and hence the claim.
	
	If $ \vr_1\le\rv_2 $ then $ f(\vr_1)\le f(\vr_2),f(\rv_2) $, contradicting the fact that $ S $ is a regular tree set.
	
	Finally if $ \vr_1\ge\rv_2 $ then $ f(\rv_2)\le f(\vr_2) $, contradicting the fact that $ S $ is regular.
	
	Hence $ \vr_1\le\vr_2 $, as desired.
\end{proof}

In our applications we sometimes already know that $ S $ is a tree set, but not that $ S $ is regular. The proof of Lemma~\ref{Isomorphism} still goes through though if we know that the pre-images of small separations are small:

\begin{LEM}\label{lem:isononreg}
	Let $ f\colon \vR\to \vS $ be a bijective homomorphism of separation systems. If $ R $ is nested, $ S $ is a tree set, and $ \vr\in \vR $ is small whenever $ f(\vr) $ is small, then $ f $ is an isomorphism of tree sets.
\end{LEM}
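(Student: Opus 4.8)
The plan is to mimic the proof of Lemma~\ref{Isomorphism} as closely as possible, substituting the weaker hypothesis on small separations for the regularity of~$S$ wherever the latter was used. First I would invoke Lemma~\ref{beforeIso} to deduce that the image of~$R$ in~$S$ is nested; since $f$ is bijective this image is all of~$\vS$, so $S$ is nested, and as $S$ is assumed to be a tree set there is nothing further to prove about nestedness. It then suffices to show that $f^{-1}$ is order-preserving, i.e. that $\vr_1\le\vr_2$ whenever $f(\vr_1)\le f(\vr_2)$; given this, $f$ is a bijective homomorphism with order-preserving inverse that commutes with the involutions, hence an isomorphism, and since $S$ is a tree set and isomorphism transports this property, $R$ is a tree set too.

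Next I would carry out the case analysis from Lemma~\ref{Isomorphism}. Fix $\vr_1,\vr_2\in\vR$ with $f(\vr_1)\le f(\vr_2)$. Because $R$ is nested, $r_1$ and $r_2$ have comparable orientations, giving four cases. If $\vr_1\ge\vr_2$, then $f(\vr_1)=f(\vr_2)$, whence $\vr_1=\vr_2$ by bijectivity and we are done. The cases $\vr_1\le\rv_2$ and $\vr_1\ge\rv_2$ must be ruled out, and this is exactly where the original proof used regularity of~$S$; I expect this substitution to be the only real point requiring care.

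The key idea is that the forbidden cases each force a \emph{small} separation in~$R$, which the hypothesis then pushes into a small, hence—because $S$ is a tree set with no trivial separations and all trivial separations are small—problematic separation in~$S$. Concretely, if $\vr_1\ge\rv_2$, then combining with $f(\vr_1)\le f(\vr_2)$ gives $f(\rv_2)\le f(\vr_1)\le f(\vr_2)$, so $f(\rv_2)\le f(\vr_2)=\bigl(f(\rv_2)\bigr)^*$, meaning $f(\rv_2)$ is small; by hypothesis $\rv_2$ is then small in~$R$, i.e. $\rv_2\le\vr_2$, so $r_2$ is degenerate, contradicting that $R$ is a tree set. (Alternatively one argues directly that a small non-degenerate image would have to be trivial in the tree set~$S$.) The remaining case $\vr_1\le\rv_2$ reduces to the previous one by symmetry: applying the involution to $\vr_1\le\rv_2$ yields $\vr_2\le\rv_1$, and since $f$ commutes with the involution we likewise get $f(\vr_1)\le f(\vr_2)=(f(\rv_2))^*$ forcing $f(\vr_1)$ small, hence $\vr_1$ small and $r_1$ degenerate, again contradicting that $R$ is a tree set.

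The main obstacle is simply to check that the weakened hypothesis—smallness of pre-images rather than full regularity of~$S$—is strong enough to close off the two bad cases, and to keep straight which orientation becomes small in each. Once the smallness of the relevant separation in~$R$ is established, the contradiction is immediate from the fact that a tree set contains no degenerate element. Hence $\vr_1\le\vr_2$ in all surviving cases, $f^{-1}$ is order-preserving, and $f$ is an isomorphism of tree sets.
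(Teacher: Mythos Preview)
Your case analysis goes wrong in the two ``bad'' cases. From $\vr_1\ge\rv_2$ you correctly derive that $f(\rv_2)$ is small and hence, by hypothesis, that $\rv_2\le\vr_2$. But this says only that $\rv_2$ is \emph{small}, not that $r_2$ is \emph{degenerate}; tree sets are permitted to contain small (non-degenerate, non-trivial) separations---indeed, the entire point of this lemma over Lemma~\ref{Isomorphism} is to handle exactly that situation. So neither ``$r_2$ is degenerate'' nor the parenthetical alternative (``a small non-degenerate image would have to be trivial in the tree set~$S$'') is valid. Moreover, your appeal to ``$R$ is a tree set'' is circular: you only intend to deduce this once $f$ is shown to be an isomorphism. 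The case $\vr_1\le\rv_2$ has the same defect: from $f(\vr_1)\le f(\vr_2)$ and $f(\vr_1)\le f(\rv_2)$ you cannot conclude that $f(\vr_1)$ is small, since neither of these is $f(\rv_1)$.

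The paper's proof uses the smallness hypothesis only in the preliminary sub-case $r_1=r_2$ (where $f(\vr_1)\le f(\vr_2)=f(\rv_1)$ genuinely forces $f(\vr_1)$ small, hence $\vr_1\le\rv_1=\vr_2$). Once $r_1\ne r_2$ is secured, the two bad cases are disposed of by \emph{triviality} rather than smallness: e.g.\ if $\vr_1\ge\rv_2$ with $r_1\ne r_2$, then $f(\rv_2)\le f(\vr_1)$ and $f(\rv_2)\le f(\rv_1)$ with $f(r_2)\ne f(r_1)$, so $f(\rv_2)$ is trivial in~$S$ with witness $f(r_1)$---and \emph{that} contradicts $S$ being a tree set. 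You need to separate off the $r_1=r_2$ case first and then argue via triviality, not smallness, in the remaining cases.
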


\begin{proof}
	It suffices to show that the inverse of $ f $ is order-preserving. Let $ \vr_1,\vr_2\in \vR $ with $ f(\vr_1)\le f(\vr_2) $ be given. If $ f(\vr_1)=f(\vr_2) $ or $ f(\rv_1)=f(\vr_2) $ we have $ \vr_1\le\vr_2 $ by assumption. Therefore we may assume that $ f(\vr_1)\lne f(\vr_2) $ and hence $ r_1\ne r_2 $. Since $ R $ is nested $ r_1 $ and $ r_2 $ have comparable orientations.
	
	If $ \vr_1\ge\vr_2 $ then $ f(\vr_1)=f(\vr_2) $ contradicting $ f(\vr_1)\lne f(\vr_2) $.
	
	If $ \vr_1\lne\rv_2 $ then $ f(\vr_1)\lne f(\vr_2),f(\rv_2) $, contradicting the fact that $ S $ contains no trivial element.
	
	Finally if $ \vr_1\ge\rv_2 $ then $ f(\rv_2)\lne f(\vr_1),f(\rv_2) $, again contradicting the fact that $ S $ contains no trivial element.
	
	Hence $ \vr_1\le\vr_2 $, as desired.
\end{proof}

Finally, we introduce the following non-standard notation: for separations $ \vr $ and $ \vs $ in some separation system we write $ \vr\lne\vs $ if $ \vr\le\vs $ and $ r\ne s $. Note that this is not the same as $ \vr<\vs $: if $ \vr<\vs $, that is, if $ \vr\le\vs $ and $ \vr $ and $ \vs $ differ as {\em oriented} separations, then $ r $ and $ s $ could still be the same {\em unoriented} separation if $ \vr=\sv $. On the other hand, if $ \vr\lne\vs $, then $ r $ and $ s $ must be distinct as {\em unoriented} separations. In nearly all places in this paper in which we consider separations $ \vr $ and $ \vs $ with $ \vr\le\vs $, and $ \vr $ and $ \vs $ are different oriented separations, we shall also want $ r $ and $ s $ to be different as unoriented separations and hence will write $ \vr\lne\vs $. To avoid confusion we shall, from now on, never use the symbol `$ < $' again.

\section{Profinite tree sets}\label{sec:proftreesets}

\subsection{Introduction}

We refer the reader to~\cite{ProfiniteASS} for an introduction to inverse limits of sets, inverse systems of separation systems, and profinite separation systems. In particular we assume familiarity with Section~3, Section~4, and Section~5 up to Example~5.7 of~\cite{ProfiniteASS}; we shall follow the terms and notation given there.

Let $ \S=(\vSp\mid p\in P) $ be an inverse system of finite separation systems and $ \vS=\invlim\S $ its inverse limit. The separations in $ \vS $ are of the form $ \vs=(\vsp\mid p\in P) $ with $ \vsp\in\vSp $. To enhance readability, when no confusion is possible, we will simply write $ \vs\in\vS $ for separations in $ \vS $ and implicitly assume that $ \vs=(\vsp\mid p\in P) $. Thus, if no context is given, $ \vsp $ will always be the projection of $ \vs $ to $ \vSp $.

We are interested in the relations between properties of the $ \vSp $ and the properties of~$ \vS $: which structural properties of separation systems can be `projected downwards' from $ \vS $, and which can be `lifted upwards' from the $ \vSp $ to $ \vS $? In~\cite{ProfiniteASS} this question was answered for the two properties of being nested, and being regular: both of these properties `lift up' in the sense that if all $ \vSp $ are nested (resp. regular), then $ \vS $ is nested (resp. regular), too. Moreover, both of these properties also `project downwards': if $ \vS $ is nested (resp. regular), then the $ \vSp $ can be assumed to be nested (resp. regular), too. More precisely: every nested (resp. regular) profinite separation system is the inverse limit of nested (resp. regular) finite separation systems.

Indeed, the following was shown in~\cite{ProfiniteASS}:

\begin{PROP}[\cite{ProfiniteASS}]\label{prop:regular}
	{\em
		\tn{ }
   \vskip-\medskipamount\vskip0pt
		\begin{enumerate}\itemsep=0pt
			\item[\tn{(i)}] Every inverse limit of finite regular separation systems is regular.
			\item[\tn{(ii)}] Every profinite regular separation system is an inverse limit of finite regular separation systems.
		\end{enumerate}
	}
\end{PROP}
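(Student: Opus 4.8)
The plan is to treat the two parts separately: part~(i) by a direct coordinatewise computation, and part~(ii), which is the substantial direction, by a compactness argument over the index set.

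For part~(i), recall from~\cite{ProfiniteASS} that both the involution and the partial order on an inverse limit $\vS=\invlim\S$ of separation systems $\S=(\vSp\mid p\in P)$ are defined coordinatewise. Hence a separation $\vs\in\vS$ is small, i.e.\ satisfies $\vs\le\sv$, if and only if $\vsp\le\svp$ holds in $\vSp$ for every $p\in P$. In particular, if $\vS$ had a small separation $\vs$, then its projection $\vsp$ would be small in $\vSp$ for each $p$, contradicting the regularity of the $\vSp$. Thus $\vS$ has no small separation and is regular. This lifting direction is easy, exactly as for nestedness in~\cite{ProfiniteASS}.

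For part~(ii), let $\vS=\invlim\S$ be a profinite regular separation system. The difficulty that rules out a pointwise argument is that smallness does not lift: a small separation of some $\vSp$ need not be the projection of a small separation of $\vS$. I would therefore first replace each $\vSp$ by the image $\pi_p(\vS)\sube\vSp$ of the limit under the projection $\pi_p$; since $\vS\sube\invlim(\pi_p(\vS))\sube\invlim(\vSp)=\vS$, this does not change the inverse limit, and I may assume each $\pi_p$ is onto. Now, for each index $q$ set
\[ B_q:=\menge{\vs\in\vS : \vsq\le\svq}, \]
the set of separations whose $q$-coordinate is small in $\vSq$. I would then record three facts about this family. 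First, each $B_q$ is a clopen subset of the compact space $\vS$, being determined by the single coordinate $q$. Second, because the bonding maps are order-preserving homomorphisms, smallness of the $q$-coordinate forces smallness of every coarser coordinate, so $B_q\sube B_{q'}$ whenever $q\ge q'$; thus the family $(B_q)_{q\in P}$ is decreasing along the directed set $P$. Third, since the order on $\vS$ is coordinatewise, $\bigcap_{q\in P}B_q$ is exactly the set of small separations of $\vS$, which is empty because $\vS$ is regular.

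The key step is then a compactness argument. The family $(B_q)$ is directed and consists of closed subsets of the compact space $\vS$; were every $B_q$ nonempty, directedness together with the finite intersection property would give $\bigcap_{q\in P}B_q\ne\es$, contradicting the third fact above. Hence $B_q=\es$ for some $q$, and then $B_{q'}=\es$ for every finer $q'\ge q$. Therefore $Q:=\menge{q\in P : B_q=\es}$ is a nonempty up-closed, and so cofinal, subset of $P$. For each $q\in Q$ no element of $\vSq=\pi_q(\vS)$ is small, which is to say $\vSq$ is regular. Restricting to the cofinal subsystem indexed by $Q$ leaves the limit unchanged and exhibits $\vS$ as the inverse limit of the finite regular separation systems $(\vSq\mid q\in Q)$, as required. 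The one real obstacle is the failure of smallness to lift; the device that overcomes it is the complementary observation that smallness propagates from finer to coarser coordinates, which makes the sets $B_q$ nested and lets compactness locate an index beyond which the projected system is already regular.
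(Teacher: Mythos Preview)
Your proof is correct and follows essentially the same approach as the paper. For~(i) both you and the paper observe that smallness is defined coordinatewise; for~(ii) both use the compactness of the inverse limit to show that if every $\vSp$ contained a small separation these would lift to a small separation of~$\vS$, so some~$\vS_{p_0}$---and hence every~$\vSq$ with $q\ge p_0$---must already be regular. Your phrasing via the nested closed sets $B_q\sube\vS$ is just a topological repackaging of the paper's inverse-system argument on the finite sets of small separations in each~$\vSp$.
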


The same assertion holds with `regular' being replaced by `nested' and follows from~\cite[Lemma~5.4]{ProfiniteASS}.

The proof of Proposition~\ref{prop:regular} is straightforward: (i) follows directly from the definition of an inverse limit of separation systems. For (ii), one observes that if every $ \vSp $ in an inverse system $ \S=(\vSp\mid p\in P) $ contains a small separation, then these small separation lift to a common element of $ \vS=\invlim\S $, which will be small, too. Therefore if $ \vS $ is regular then $ \S $ must already contain a sub-system of regular finite separation systems whose inverse limit is $ \vS $.

However, already in~\cite{ProfiniteASS} it was observed that the same is not true for trivial separations: it is possible that some $ \vs=(\vsp\mid p\in P)\in\vS $ is non-trivial in $ \vS $, but its projections $ \vsp\in\vSp $ are trivial in $ \vSp $ for all $ p\in P $. See~\cite[Example~5.7]{ProfiniteASS} for an example of this behaviour. The problem with these so-called {\em finitely trivial} separations is that the witnesses of the triviality of $ \vsp $ in $ \vSp $ may not lift to a witness of the triviality of $ \vs $ in $ \vS $.

The aim of this section is to overcome the difficulties laid out above and establish the following theorem:

\ThmTreesets*

As~\cite[Example~5.7]{ProfiniteASS} shows, it is not possible to prove (ii) of Theorem~\ref{thm:treesets} with a direct compactness argument. Therefore a new approach is needed in order to establish Theorem~\ref{thm:treesets}. Before we get to this we shall briefly deal with the much easier special case of regular tree sets:

\begin{THM}\label{thm:treesetsregular}
	{\em
		\tn{ }
		\begin{enumerate}
			\item[\tn{(i)}] Every inverse limit of regular finite tree sets is a regular tree set.
			\item[\tn{(ii)}] Every regular profinite tree set is an inverse limit of regular finite tree sets.
		\end{enumerate}
	}
\end{THM}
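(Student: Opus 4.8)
The plan is to reduce Theorem~\ref{thm:treesetsregular} to the already-established Proposition~\ref{prop:regular} together with the corresponding `nested' version, exploiting the fact that, in the regular case, being a tree set is equivalent to being nested and regular. This equivalence is precisely the content of the earlier remark that \emph{every regular nested separation system is a tree set} (since all trivial separations are small, and regularity forbids small separations being co-small in the offending way); conversely every tree set is nested by definition, and a regular tree set is of course regular. So a regular finite tree set is exactly a finite separation system that is both nested and regular, and likewise a regular profinite tree set is exactly a profinite separation system that is both nested and regular.

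For part~(i), I would start with an inverse system $\S=(\vSp\mid p\in P)$ of regular finite tree sets and set $\vS=\invlim\S$. Each $\vSp$ is nested and regular, so by Proposition~\ref{prop:regular}(i) the limit $\vS$ is regular, and by the analogous `nested' statement (following from~\cite[Lemma~5.4]{ProfiniteASS}) the limit $\vS$ is nested. Being regular and nested, $\vS$ is a tree set, and it is regular by construction; hence $\vS$ is a regular tree set. This direction is essentially immediate once the equivalence is invoked.

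For part~(ii), I would take a regular profinite tree set $\vS$; by definition it is profinite, hence an inverse limit $\vS=\invlim(\vSp\mid p\in P)$ of finite separation systems, and it is nested and regular. Applying Proposition~\ref{prop:regular}(ii) I obtain that $\vS$ is the inverse limit of regular finite separation systems, and applying the `nested' analogue I obtain that $\vS$ is the inverse limit of nested finite separation systems. The delicate point is to get both properties \emph{simultaneously} on the \emph{same} finite quotients: the proof sketch of Proposition~\ref{prop:regular}(ii) works by passing to a cofinal subsystem on which the $\vSp$ are already regular, and the nested version passes to a cofinal subsystem on which the $\vSp$ are already nested. Since the intersection of two cofinal (directed, up-closed) subsets of a directed poset is again cofinal, I would restrict to the common refinement and conclude that $\vS$ is the inverse limit of finite separation systems that are both nested and regular, i.e.\ regular finite tree sets.

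The main obstacle, therefore, is not any single hard estimate but the bookkeeping needed to combine the two cofinal-subsystem arguments into one, and to check that restricting to a cofinal subsystem of $P$ does not change the inverse limit $\vS$ up to isomorphism. The crucial structural input that makes this work — and that fails in the non-regular case, which is exactly why Theorem~\ref{thm:treesets}(ii) is so much harder — is that regularity and nestedness both `project downwards' by a simple compactness argument (a small or non-nested witness in $\vS$ lifts from, or forces, a corresponding witness in cofinally many $\vSp$), whereas triviality does not, owing to the phenomenon of finitely trivial separations described after~\cite[Example~5.7]{ProfiniteASS}. I would end by remarking that this is precisely why the regular case admits the short proof above while the general statement requires the machinery of the later sections.
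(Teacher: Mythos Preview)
Your approach is the same as the paper's: reduce to the regular + nested characterisation and invoke Proposition~\ref{prop:regular} and its nested analogue. Part~(i) is identical.

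For part~(ii), however, your execution contains a small but real slip. You claim that the nested analogue of Proposition~\ref{prop:regular}(ii) works by passing to a cofinal subsystem $P'\subseteq P$ on which the $\vSp$ are already nested, and then propose to intersect this with the cofinal set coming from regularity. But nestedness does \emph{not} project down to a cofinal subsystem in this way: if the bonding maps are not surjective, the $\vSp$ may contain separations not hit by $\vS$, and these could cross for every $p$. The correct manoeuvre---and the one the paper uses---is to first replace $\S$ by its \emph{surjective restriction} (same index set $P$, each $\vSp$ replaced by the image of $\vS$); since nestedness is preserved under surjective homomorphisms of separation systems, every $\vSp$ is then nested. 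Only after this does one find $p_0$ with $\vS_{p_0}$ regular and restrict to $p\ge p_0$. So the two steps are sequential, not a symmetric intersection of cofinal sets, and the nested step changes the $\vSp$ rather than the index set. Once you make this adjustment your argument goes through and coincides with the paper's.
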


\begin{proof}
	Assertion \tn{(i)} follows directly from~\cite[Lemma~5.4]{ProfiniteASS}, \cite[Proposition~5.6(i)]{ProfiniteASS} and the fact that every regular nested separation system is a regular tree set as all trivial separations are small.
	
	For \tn{(ii)} let $ \vS $ be a regular tree set with $ \vS=\invlim\S $ for an inverse system $ \S=(\vSp\mid p\in P) $. We may assume without loss of generality that $ \S $ is surjective; then every $ \vSp $ is nested. Furthermore as seen in the proof of \cite[Proposition~5.6(ii)]{ProfiniteASS} there exists some $ p_0\in P $ for which $ \vS_{p_0} $ is regular. Now $ (\vSp\mid p\ge p_0) $ is the desired inverse system of finite regular tree sets.	
\end{proof}

Let us now return to Theorem~\ref{thm:treesets}. Its first part is straightforward to prove:\\
\\
{\em Proof of Theorem~\ref{thm:treesets}}(i). Let $ \S=(\vSp\mid p\in P) $ be an inverse system of finite tree sets and $ \vS=\invlim\S $. Suppose some $ \vs=(\vs_p\mid p\in P)\in\vS $ is trivial in $ \vS $ with witness~$ r $. Since $ P $ is a directed set there is a $ p\in P $ such that $ \vs_p\ne\vr_p $ and~$ \vs_p\ne\rv_p $. But $ \vs\lne\vr,\rv $ in $ \vS $ implies that~$ \vs_p\lne\vr_p,\rv_p $ in~$ S_p $, contrary to the assumption that $ S_p $ is a tree set.

Moreover if every $ S_p $ is nested then so is~$ \vS $ by~\cite[Lemma~5.4]{ProfiniteASS}.

Therefore $ \vS $ is a tree set.\hfill$ \Box $\\
\\
We will postpone the proof of (ii) until the end of Section~\ref{sec:proof}. Our approach shall be to decompose a given tree set $ \tau $ into finitely many parts which together form a finite tree set. The family of all these `quotients' of $ \tau $ should then form an inverse system whose inverse limit is precisely~$ \tau $. However the exact definition of these decompositions is fairly technical, and we introduce it in the next section.

\subsection{Distinguishing separations}\label{sec:distinguishing}

This section lays the technical foundations for the proof of Theorem~\ref{thm:treesets}. Given a tree set~$ \tau $ our aim is to find a way of defining finite quotients of~$ \tau $ that form an inverse system of finite tree sets whose inverse limit is isomorphic to~$ \tau $. The latter part of this will be done in the next two sections, while in this section we define these `finite quotients' and analyse their properties.

To this end for any finite set of stars in~$ \tau $ we define an equivalence relation on~$ \tau $ which essentially breaks up~$ \tau $ into finitely many chunks. After proving a few basic facts about this equivalence relation we find certain conditions that ensure that the equivalence classes of~$ \tau $ form a finite tree set, as needed for the proof of Theorem~\ref{thm:treesets}(ii).

Following this main part of the section we analyse these equivalence relations a bit more and prove a few key lemmas.\\

Let~$ \tau $ be a tree set. A {\em selection} of~$ \tau $ is a non-empty finite set $ D\sube\tau $ of oriented separations with $ \abs{\sigma\cap D}\ne 1 $ for every splitting star $ \sigma $ of~$ \tau $.

Let us show that any selection $ D $ of~$ \tau $ divides~$ \tau $ into different sections between the stars that meet~$ D $. We make this precise by defining an equivalence relation~$ \sim_D $ on~$ \tau $.

Recall that, for $ \vr,\vs\in\tau $, we write $ \vr\lne\vs $ as shorthand notation for $ \vr\le\vs $ and $ r\ne s $.

For a separation $ \vs\in\tau $ and a selection $ D $ let
\[ \dplus{\vs}:=\menge{\vd\in D\mid \vd\lneqq\vs} \]
and
\[ \dminus{\vs}:=\dplus{\sv}\,. \]
Two separations $ \vs,\vr $ are $ D $-{\em equivalent} for a selection $ D $, denoted as $ \vs\sim_D\vr $, if $ \dplus{\vs}=\dplus{\vr} $ and $ \dminus{\vs}=\dminus{\vr} $. This is an equivalence relation with finitely many classes. We write $ \class{\vs} $ for the equivalence class of $ \vs\in\tau $ under~$ \sim_D $. A separation $ \vd\in D $ {\em distinguishes} $ \vr $ and $ \vs $ if
\[ \vd\in\braces{\dplus{\vr}}\triangle\braces{\dplus{\vs}}\tn{  or  }\vd\in\braces{\dminus{\vr}}\triangle\braces{\dminus{\vs}}. \]
Thus $ \vr $ and $ \vs $ are $ D $-equivalent if and only if no $ \vd\in D  $ distinguishes them.\\
\\
For a selection $ D $ of~$ \tau $ and separations $ \vr\le\vs $ it follows from the definitions that $ \dplus{\vr}\subseteq\dplus{\vs} $ and $ \dminus{\vr}\supseteq\dminus{\vs} $. Furthermore $ \dplus{\vs}\cap\dminus{\vs}=\emptyset $ for all $ \vs\in\tau $ as any element of this intersection would be trivial with witness~$ \vs $. This implies that $ \vs\sim_D\sv $ if and only if $ \dplus{\vs}=\dminus{\vs}=\emptyset $. But $ \dplus{\vs}\cup\dminus{\vs} $ is never empty and in fact contains an element of every splitting star that meets $ D $, so $ \vs\not\sim_D\sv $ for every $ \vs\in\tau $.\\
\\
The next lemma shows a few basic properties of~$ \sim_D $. The first of these is especially important, as it will enable us to turn the equivalence classes of $ \sim_D $ on $ \tau $ into a separation system.

\begin{LEM}\label{dist_list}
Let~$ \tau $ be a tree set, $ D $ a selection and~$ \vr,\vs,\vt\in\tau $.
\begin{enumerate}
\item[\tn{(i)}] If $ \vr\sim_D\vs $ then $ \rv\sim_D\sv $.
\item[\tn{(ii)}] If $ \vr\le\vs\le\vt $ and $ \vr\sim_D\vt $ then~$ \vr\sim_D\vs\sim_D\vt $.
\item[\tn{(iii)}] If there is no $ \vd\in D  $ with $ \vd\lneqq\vs $, and $ \vr\le\vs $, then~$ \vr\sim_D\vs $.
\item[\tn{(iv)}] If $ \vr\le\vs\le\tv $ and $ \vs\sim_D\vt $ then~$ \vr\sim_D\vs $.
\item[\tn{(v)}] If $ \vr\ge\vs\ge\tv $ and $ \vs\sim_D\vt $ then~$ \vr\sim_D\vs $.
\end{enumerate}
\end{LEM}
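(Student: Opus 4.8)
The five parts are interdependent, so I would prove them in the order (i), (ii), (iii), (iv), (v), deriving (iv) from (iii) and (v) from (iv) together with~(i). Part~(i) is immediate from the symmetry of $D^{\pm}$ under inversion: by definition $\dplus{\rv}=\dminus{\vr}$ and $\dminus{\rv}=\dplus{\vr}$, and likewise for~$s$, so the two equalities defining $\vr\sim_D\vs$ turn into the two equalities defining $\rv\sim_D\sv$. Part~(ii) is a squeeze argument using the monotonicity recorded just before the lemma: from $\vr\le\vs\le\vt$ one gets $\dplus{\vr}\subseteq\dplus{\vs}\subseteq\dplus{\vt}$ and $\dminus{\vr}\supseteq\dminus{\vs}\supseteq\dminus{\vt}$, and if the outer sets in each chain agree (because $\vr\sim_D\vt$) then all three agree, giving $\vr\sim_D\vs\sim_D\vt$.

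Part~(iii) is the heart of the lemma. Since $\vr\le\vs$ and $\dplus{\vs}=\emptyset$, monotonicity gives $\dplus{\vr}\subseteq\dplus{\vs}=\emptyset$, so the $D^{+}$-parts already agree. For the $D^{-}$-parts I have $\dminus{\vs}\subseteq\dminus{\vr}$ and would argue by contradiction that nothing lies in the difference: suppose $\vd\in D$ with $\vd\lneqq\rv$ but $\vd\not\lneqq\sv$. Using that $\tau$ is nested (so $d$ and $s$ have comparable orientations) and has no trivial separations, I would rule out all placements of~$\vd$ except two residual ones, namely $\vd=\sv$ and $\sv\lneqq\vd$. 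Indeed $\vd\lneqq\vs$ would put $\vd\in\dplus{\vs}=\emptyset$; and each of $\vd=\vs$ and $\vs\le\vd$ forces, via $\vd\le\rv$ and $\vr\le\vs$, that $\vr$ is trivial with witness~$s$, contradicting the tree-set property.

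In each of the two residual configurations there is a separation lying strictly below $\vs$ (namely $\vr$ when $\vd=\sv$, since then $\vr\lneqq\vs$; and $\dv$ when $\sv\lneqq\vd$, since then $\dv\lneqq\vs$), but \emph{a priori} no element of~$D$ need lie below~$\vs$. Converting such a separation-below-$\vs$ into an actual element of $\dplus{\vs}$ is the step I expect to be the main obstacle, and it is exactly where the selection hypothesis is needed; a small near-counterexample (putting a single badly placed separation into~$D$, e.g. $\bv$ in the edge tree set of the path on $v_0v_1v_2$) shows that the conclusion fails without it. The plan is to use that $\abs{\sigma\cap D}\ne 1$ for every splitting star~$\sigma$, together with the fact proved just before the lemma that $\dplus{\vx}\cup\dminus{\vx}$ meets every splitting star that meets~$D$: the offending $\vd$ forces a splitting star~$\sigma$ meeting~$D$ all of whose $D$-elements are squeezed below~$\vs$, so $\sigma$ supplies some $\vd'\in D$ with $\vd'\lneqq\vs$, i.e. $\vd'\in\dplus{\vs}$, contradicting $\dplus{\vs}=\emptyset$.

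Finally, (iv) and~(v) reduce to~(iii). For~(iv), the extra hypotheses force $\dplus{\vs}=\emptyset$: from $\vs\le\tv$ we get $\vt\le\sv$, hence $\dplus{\vt}\subseteq\dplus{\sv}=\dminus{\vs}$, while $\vs\sim_D\vt$ gives $\dplus{\vs}=\dplus{\vt}$; thus $\dplus{\vs}\subseteq\dminus{\vs}$, and since $\dplus{\vs}\cap\dminus{\vs}=\emptyset$ we conclude $\dplus{\vs}=\emptyset$. Now~(iii) applied to $\vr\le\vs$ yields $\vr\sim_D\vs$. For~(v), I would pass to inverses: $\vr\ge\vs\ge\tv$ gives $\rv\le\sv\le\vt$, and $\vs\sim_D\vt$ gives $\sv\sim_D\tv$ by~(i); applying~(iv) to the chain $\rv\le\sv\le\vt=(\tv)^{*}$ with $\sv\sim_D\tv$ gives $\rv\sim_D\sv$, whence $\vr\sim_D\vs$ by~(i) once more.
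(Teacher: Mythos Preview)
Your proposal is correct and follows essentially the same route as the paper. Parts~(i), (ii), (iv), (v) match the paper's arguments almost verbatim (the paper, like you, derives~(iv) from~(iii) by showing $\dplus{\vs}=\emptyset$, and~(v) from~(i) and~(iv)). For~(iii) you do a fuller case analysis than the paper, which simply asserts $\dv\lne\vs$ and then takes the splitting star~$\sigma$ containing~$\vd$ to produce a second element $\ve\in\sigma\cap D$ with $\ve\le\dv\lne\vs$; your extra residual case $\vd=\sv$ is one the paper glosses over, but the same splitting-star argument disposes of it. One phrasing to tighten: it is not that \emph{all} $D$-elements of~$\sigma$ are squeezed below~$\vs$ (indeed $\vd$ itself is not), but that any $\ve\in\sigma\cap D$ with $\ve\ne\vd$ satisfies $\ve\le\dv\le\vs$ with $e\ne s$, hence $\ve\lneqq\vs$.
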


\begin{proof}
$ \tn{(i)} $ This follows from
\[ \dplus{\sv}=\dminus{\vs}=\dminus{\vr}=\dplus{\rv} \]
and
\[ \dminus{\sv}=\dplus{\vs}=\dplus{\vr}=\dminus{\rv}. \]\\
\\
$ \tn{(ii)} $ By the observation above
\[ \dplus{\vr}\subseteq\dplus{\vs}\subseteq\dplus{\vt}=\dplus{\vr} \]
and similarly $ \dminus{\vr}=\dminus{\vs} $, hence $ \vr\sim_D\vs $.\\
\\
$ \tn{(iii)} $ By assumption $ \dplus{\vr}=\dplus{\vs}=\emptyset $. Furthermore $ \dminus{\vs}\subseteq\dminus{\vr} $. Suppose there is a $ \vd\in D  $ with $ \vd\lneqq\rv $ but not $ \vd\lneqq\sv $. Then $ \dv\lne\vs $ as $ \vd\not\lne\vs $ by assumption. Let $ \sigma $ be the splitting star containing $ \vd $ and $ \ve\in\sigma\cap D $ with $ \vd\ne\ve $. Then $ \ve\le\dv\lne\vs $ by the star property, contradicting the assumption that there is no such~$ \ve\in D $.\\
\\
$ \tn{(iv)} $ As $ \dplus{\vs}\subseteq\dplus{\tv}=\dplus{\sv} $ there can be no $ \vd\in D  $ with $ \vd\lneqq\vs $ as it would be trivial with witness $ s $. Thus $ \vs\sim_D\vr $ by (iii).\\
\\
$ \tn{(v)} $ This follows from (i) and (iv).
\end{proof}

We now define precisely in which way we want to turn the equivalence classes of~$ \tau $ into a separation system.

Let $ D $ be a selection of a tree set~$ \tau $. Then write
\[ \tau/D:=\braces{\menge{\class{\vs}\mid \vs\in\tau},\,\le\,,\,*\,}  \]
with $ (\class{\vs})^*:=\class{\sv} $ and $ \class{\vs}\le\class{\vr} $ if there are~$ \vsdash\in\class{\vs} $ and~$ \vrdash\in\class{\vr} $ with~$ \vsdash\le\vrdash $.

If for some selection $ D $ the relation $ \le $ of $ \tau/D $ is a partial order then~$ \tau/D $ is a separation system by Lemma~\ref{dist_list}(i). In that case $ \tau/D $ would even be nested because~$ \tau $ is.

Our aim is to ensure that $ \tau/D $ is a tree set. For this we first find sufficient conditions for $ \le $ to be a partial order, and then show that these conditions are strong enough to ensure that $ \tau/D $ does not contain any trivial elements.

The relation $ \le $ on $ \tau/D $ is reflexive by definition, thus we need to show that it is transitive and anti-symmetric. For the latter no further assumptions are needed, so we begin by proving the anti-symmetry.

\begin{LEM}\label{anti-symmetric}
Let~$ \tau $ be a tree set and $ D $ a selection. If $ \vr\le\vx $ and $ \vs\ge\vy $ for $ \vr,\vs,\vx,\vy\in\tau $ with $ \vr\sim_D\vs $ and $ \vx\sim_D\vy $ then also~$ \vr\sim_D\vx $.
\end{LEM}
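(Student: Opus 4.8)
The plan is to unfold the definition of $\sim_D$ and establish the two defining identities $\dplus{\vr}=\dplus{\vx}$ and $\dminus{\vr}=\dminus{\vx}$ separately and directly. The only tool required is the monotonicity recorded just before Lemma~\ref{dist_list}: for $\vr\le\vs$ one has $\dplus{\vr}\sube\dplus{\vs}$ and $\dminus{\vr}\supe\dminus{\vs}$. The four hypotheses $\vr\le\vx$, $\vy\le\vs$ (which is what $\vs\ge\vy$ says), $\vr\sim_D\vs$, and $\vx\sim_D\vy$ supply exactly the links needed to close a cycle of inclusions and force equality.

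For the positive part, I would first apply $\vr\le\vx$ to obtain $\dplus{\vr}\sube\dplus{\vx}$. Reading off the chain
\[ \dplus{\vx}=\dplus{\vy}\sube\dplus{\vs}=\dplus{\vr}, \]
where the two equalities come from $\vx\sim_D\vy$ and $\vr\sim_D\vs$ and the middle inclusion from $\vy\le\vs$, yields $\dplus{\vx}\sube\dplus{\vr}$. The two inclusions together give $\dplus{\vr}=\dplus{\vx}$.

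The negative part is entirely symmetric, with every inclusion reversed because $\dminus{\cdot}$ is order-reversing. Here $\vr\le\vx$ gives $\dminus{\vr}\supe\dminus{\vx}$, while
\[ \dminus{\vx}=\dminus{\vy}\supe\dminus{\vs}=\dminus{\vr} \]
(again using $\vx\sim_D\vy$, $\vy\le\vs$, and $\vr\sim_D\vs$) gives $\dminus{\vx}\supe\dminus{\vr}$, so $\dminus{\vr}=\dminus{\vx}$. Having matched both $\dplus{\cdot}$ and $\dminus{\cdot}$, I conclude $\vr\sim_D\vx$.

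There is no substantial obstacle: the statement is a pure inclusion-chase, and the only point demanding care is keeping the direction of the inclusions straight for $\dminus{\cdot}$, whose monotonicity is opposite to that of $\dplus{\cdot}$. The reason the argument collapses so cleanly is that the hypotheses $\vr\le\vx$ and $\vy\le\vs$ orient the square $\vr,\vs,\vx,\vy$ in opposite senses, so that the two equivalences $\vr\sim_D\vs$ and $\vx\sim_D\vy$ close the chain back on itself and squeeze all four sets of distinguishers into a single value.
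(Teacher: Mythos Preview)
Your proof is correct and follows exactly the same route as the paper's own argument: both unwind the definition of $\sim_D$ and close the chain of inclusions $\dplus{\vr}\sube\dplus{\vx}=\dplus{\vy}\sube\dplus{\vs}=\dplus{\vr}$ (and the analogous reversed chain for $\dminus{\cdot}$). The paper merely writes this as a single displayed line rather than spelling out each step.
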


\begin{proof}
We have
\[ \dplus{\vr}\sube\dplus{\vx}=\dplus{\vx}\sube\dplus{\vs}=\dplus{\vr} \]
and similarly $ \dminus{\vr}=\dminus{\vx} $.
\end{proof}

This shows that $ \le $ on $ \tau/D $ is antisymmetric. To prove transitivity we need further assumptions, as the following example demonstrates.

\begin{EX}\label{ex:non-trans}
Let $ T $ be the following graph.


\begin{center}

\begin{tikzpicture}[scale=0.5]
\tikzstyle{every node}=[circle, draw, fill=black!20, inner sep=1.5pt, minimum width=0.7pt]
	\node (1) at (0,3) {1};
	\node (2) at (2,2) {2};
	\node (3) at (4,2) {3};
	\node (4) at (6,2) {4};
	\node (5) at (8,3) {5};
	\node (6) at (4,0) {6};

	\draw (1) -- (2) -- (3) -- (4) -- (5);
	\draw (3) -- (6);
\end{tikzpicture}

\end{center}

The edge tree set $ \tau(T) $ (as defined in~\cite{TreeSets}) is regular and we have $ (6,3)\le(3,2) $ and $ (4,3)\le(3,6) $. For the selection $ D=\menge{(1,2),(3,2),(3,4),(5,4)} $ the edges $ (2,3) $ and $ (3,4) $ get identified in $ \tau(T)/D $, implying
\[ \class{(6,3)}\le\class{(3,2)}=\class{(4,3)}\le\class{(3,6)} \]
in $ \tau(T)/D $. But $ (6,3)\not\le(3,6) $, so $ \le $ is not transitive on $ \tau(T)/D $.
\end{EX}

This example exploits the fact that there is a branching star (i.e., a splitting star of size at least three) that does not meet $ D $ between two splitting stars that do meet~$ D $. In order to prevent this counterexample to transitivity one could ask that $ D $ meets every branching star that lies between two elements of~$ D $. But this alone is not enough to ensure that $ \le $ on $ \tau/D $ is transitive: if we replace the separation $ (6,3) $ in Example~\ref{ex:non-trans} above with a chain of order type $ \omega $ the resulting tree set would not have any branching stars, but the transitivity of $ \le $ would still fail for the same reason. Therefore we also need an assumption on~$ \tau $ that ensures that whenever there is a three-star as in the example above we can also find a branching star, which would then be subject to the condition on~$ D $.

Recall that $ \vb\in\tau $ is a branching point of~$ \tau $ if $ \vb $ lies in a splitting star of size at least three.

Call a selection $ D $ of a tree set~$ \tau $ {\em branch-closed} if $ \vb\in D  $ for every branching point $ \vb $ of~$ \tau $ for which there are $ \vd_1,\vd_2\in D $ with $ \vd_1\le\vb\le\dv_2 $. Furthermore~$ \tau $ is {\em chain-complete} if every non-empty\footnote{This differs from the common definition of a chain-complete poset, which usually omits the `non-empty', as it does not imply that~$ \tau $ has a smallest element.} chain $ C\subseteq\tau $ has a supremum in~$ \tau $.

Let us see an example of a chain-complete tree set:

\begin{EX}\label{ex:(non)chain-complete}
Let $ X $ be a non-empty set of positive real numbers and let
\[ \tau(X):=\braces{\bigcup_{x\in X}\menge{x,-x},\,\curlyle\,,\,*}, \]
where $ x^*:=-x $, and $ x\curlyle y $ if and only if $ x\le y $ as real numbers and $ x $ and $ y $ have the same sign. Then $ \tau(X) $ is a tree set. Moreover, $ \tau(X) $ is chain-complete if and only if $ X $ is compact as a subset of $ \mathbb{R} $.
\end{EX}

We claim that the two conditions that $ \tau $ is chain-complete and $ D $ branch-closed are enough to ensure that $ \le $ on $ \tau/D $ is transitive and hence a partial order. Before we prove this claim we need to establish some basic properties of chain-complete tree sets, beginning with the fact that every chain has not only a supremum but an infimum too:

\begin{LEM}
Let~$ \tau $ be a chain-complete tree set and $ C $ a chain. Then $ C $ has an infimum in~$ \tau $.
\end{LEM}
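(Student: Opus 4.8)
The plan is to exploit the order-reversing involution~$*$ of the tree set in order to convert the existence of suprema into the existence of infima. Since $\tau$ is a separation system, its involution satisfies $\vr\le\vs$ if and only if $\sv\le\rv$; in particular $*$ is order-reversing, so it interchanges upper and lower bounds and turns least elements into greatest elements. The whole statement should then follow from chain-completeness by a short dualisation, with no genuine combinatorial input beyond the separation-system axioms.

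Concretely, I would first pass from the given (non-empty) chain~$C$ to its \emph{reverse} $\menge{\sv\mid\vs\in C}$. Because $*$ reverses the ordering, it sends comparable separations to comparable separations, so this reverse set is again a chain, and it is non-empty since~$C$ is. By chain-completeness of~$\tau$ it therefore has a supremum, which I shall call~$\vso$.

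Next I would verify that $\svo$ is the infimum of~$C$. Applying~$*$ to the inequalities $\sv\le\vso$, which hold for all $\vs\in C$ because $\vso$ is an upper bound of the reverse chain, yields $\svo\le\vs$ for every $\vs\in C$, so $\svo$ is a lower bound of~$C$. For minimality, let $\vl$ be any lower bound of~$C$; then $\vl\le\vs$ for all $\vs\in C$, hence $\sv\le\lv$ for all such~$\vs$, so that $\lv$ is an upper bound of the reverse chain. As $\vso$ is the \emph{least} such upper bound we get $\vso\le\lv$, and applying~$*$ once more gives $\vl\le\svo$. Thus $\svo$ is the greatest lower bound of~$C$, i.e.\ $\svo=\inf C$.

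The argument is essentially the standard order-theoretic duality between suprema and infima, and the only point requiring any care is the repeated, correct application of the fact that~$*$ reverses the order; there is no real obstacle here, and nestedness is not needed beyond what is already built into the axioms of a separation system.
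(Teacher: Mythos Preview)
Your proof is correct and follows exactly the same approach as the paper: pass to the reversed chain $C'=\{\sv\mid\vs\in C\}$, take its supremum, and use that the involution is order-reversing to conclude that the inverse of this supremum is the infimum of~$C$. The paper's proof is terser (it simply asserts the final step without the explicit verification you provide), but the idea is identical.
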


\begin{proof}
Consider the chain
\[ C':=\menge{\tv\mid \vt\in C} \]
and let $ \sv $ be its supremum in~$ \tau $. Then $ \vs $ is the infimum of $ C $.
\end{proof}

Recall that an orientation~$ O $ of~$ \tau $ is splitting if every element of $ O $ lies below some maximal element of~$ O $. 

The usual way to find a branching star in a tree set~$ \tau $ is to define a consistent orientation with three or more maximal elements and then show that it is splitting. The first part can be done with the Extension Lemma (Lemma~\ref{lem:extension}). For the latter part the following lemma provides a sufficient condition for a consistent orientation to be splitting. It turns out that having {\em two} maximal elements is already enough, if~$ \tau $ is chain-complete:

\begin{LEM}\label{lem:twoclosedgeneral}
Let $\tau$ be a chain-complete tree set and $O$ a consistent orientation of $\tau$ with two or more maximal elements. Then $ O $ is splitting.
\end{LEM}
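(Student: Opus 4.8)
The plan is to prove the slightly stronger statement that \emph{every} element of $O$ lies below a maximal element of~$O$, using Zorn's Lemma with chain-completeness supplying the upper bounds. Fix $\vs\in O$ and set
\[ U:=\menge{\vt\in O\mid \vt\ge\vs}. \]
Then $\vs\in U$, so $U\ne\es$, and any element maximal in $U$ is also maximal in~$O$: any $\vu\in O$ with $\vm\lne\vu$ would satisfy $\vu\ge\vm\ge\vs$ and hence lie in~$U$. Thus it suffices to produce a maximal element of~$U$, and by Zorn's Lemma this reduces to checking that every non-empty chain of~$U$ has an upper bound in~$U$.

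The crucial step, which I expect to be the only real obstacle, is that the supremum of a chain contained in $O$ lies again in~$O$. Let $C\sube U$ be a non-empty chain. By chain-completeness $\vc:=\sup C$ exists in~$\tau$, and $\vc\ge\vt\ge\vs$ for any $\vt\in C$, so $\vc$ bounds $C$ from above and lies in~$U$ once we know $\vc\in O$. Suppose for contradiction that $\cv\in O$. For every $\vt\in C$ we then have $\vt\le\vc=(\cv)^*$, so $\vt$ and $\cv$ are two elements of~$O$ with $\vt\le(\cv)^*$; since $O$ is consistent this is impossible unless $t=c$. As $\vc\notin O$ this forces $\vt=\cv$, and since $\vt\in C$ was arbitrary we obtain $C=\menge{\cv}$ and therefore $\vc=\sup C=\cv$ -- contradicting that the tree set~$\tau$ has no degenerate element. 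Hence $\cv\notin O$, i.e.\ $\vc\in O$, and $\vc$ is the required upper bound of $C$ in~$U$.

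Zorn's Lemma now yields a maximal element $\vm$ of~$U$; by the observation above $\vm$ is maximal in~$O$ and satisfies $\vm\ge\vs$. Thus the arbitrary element $\vs\in O$ lies below a maximal element of~$O$, so $O$ is splitting. Both hypotheses on~$\tau$ enter precisely in the supremum step: chain-completeness to produce $\vc$, and the tree-set axioms (consistency of~$O$ together with the absence of degenerate elements) to place $\vc$ in~$O$. The assumption that $O$ has two or more maximal elements does not seem to be needed for this argument; it reflects the intended application, where one wants the splitting star of~$O$ -- its set of maximal elements -- to be large enough to serve as a branching star.
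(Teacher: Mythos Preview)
Your consistency step has the inequality the wrong way round. Consistency of~$O$ forbids pairs $\va,\vb\in O$ with $a\ne b$ and $\av\le\vb$: two elements of~$O$ pointing \emph{away} from each other. Having $\vt,\cv\in O$ with $\vt\le(\cv)^*=\vc$ is the opposite configuration --- $\vt$ and $\cv$ point \emph{towards} each other, as in any star --- and is perfectly consistent. So from $\cv\in O$ and $\vt\le\vc$ you cannot deduce $t=c$, and the supremum of your chain~$C$ may well lie outside~$O$.

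In fact the stronger statement you aim for is false, so no local repair can avoid the two-maximal-elements hypothesis. Take $\tau=\tau(X)$ as in Example~\ref{ex:(non)chain-complete} with $X=\{1-\tfrac1n:n\ge 2\}\cup\{1\}$; this is chain-complete. The orientation $O=\{\,1-\tfrac1n:n\ge 2\,\}\cup\{-1\}$ is consistent, but the chain of positive elements has supremum $1\notin O$, and none of them lies below the unique maximal element~$-1$ of~$O$; hence $O$ is not splitting. The paper's proof uses the two given maximal elements $\vr,\vs\in O$ exactly at this point: if $\vt\in O$ lies below neither of them, then (by nestedness, consistency, and the maximality of $\vr,\vs$) the up-closure of~$\vt$ in~$O$ is a chain bounded above by both $\rv$ and~$\sv$, so its supremum~$\vm$ satisfies $\mv\ge\vr,\vs$. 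Since $\vr$ and $\vs$ are distinct and maximal in~$O$, this forces $\mv\notin O$, whence $\vm\in O$ is the required maximal element above~$\vt$.
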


\begin{proof}
Let $\vr,\vs$ be two maximal elements of $O$ and let $\vt\in O$ be any separation. If $\vt$ lies below $\vr$ or $\vs$ there is nothing to show. If not consider the up-closure $C:=\ucl(\vt)\sub O$ of $\vt$ in~$O$. As $O$ is consistent $C$ is a chain, which has a supremum $\vm\in\tau$ by assumption. $\rv$ and $\sv$ are upper bounds for $C$, so~$\mv\ge\vr,\vs$. But $ \vr $ and $ \vs $ are maximal in $O$, implying $\mv\notin O$ and hence~$\vm\in O$.
\end{proof}

With Lemma~\ref{lem:twoclosedgeneral} we can now show that if we have a three-star in a chain-complete tree set we can find a branching star `in the same location':

\begin{PROP}\label{star-push}
Let~$ \tau $ be a chain-complete tree set and $ \sigma $ a star with exactly three elements. Then there is a unique branching star $ \sigma' $ of~$ \tau $ such that every element of $ \sigma $ lies below a different element of~$ \sigma' $.
\end{PROP}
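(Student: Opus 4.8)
The plan is to prove existence by orienting $\tau$ so that $\sigma$'s three separations come to lie below three distinct maximal elements, and to prove uniqueness by a short counting argument. Write $\sigma=\menge{\vr_1,\vr_2,\vr_3}$, so $\vr_i\le\rv_j$ for $i\ne j$. For existence I would, for each $i$, let $M_i$ be the set of $\vt\in\tau$ with $\vr_i\le\vt$ and $\vt\le\rv_j$ for both $j\ne i$. This set is non-empty, since $\vr_i\in M_i$ by the star inequalities, and it is a chain: if two elements $\vt,\vt'$ were incomparable, nestedness would give say $\vt\le\tv'$, whence $\vr_i\le\vt\le\tv'$ and $\vr_i\le\vt'$ exhibit $\vr_i$ as trivial. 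Hence by chain-completeness $\vs_i:=\sup M_i$ exists, and as each $\rv_j$ is an upper bound of $M_i$ while $\vr_i$ is a lower bound we get $\vs_i\in M_i$, that is $\vr_i\le\vs_i\le\rv_j$ for $j\ne i$. I would then show that $\menge{\vs_1,\vs_2,\vs_3}$ is again a star, i.e. $\vs_i\le\sv_j$ for $i\ne j$. Granting this, $\dcl\menge{\vs_1,\vs_2,\vs_3}$ is a consistent partial orientation, which by the Extension Lemma extends to a consistent orientation $O$ of $\tau$ with $\vs_1$ maximal (a tree set has no co-trivial separations). A case analysis—using the inequalities $\vs_i\le\sv_j$, the maximality of $\vs_1$, that $\vs_i=\sup M_i$, and that no separation lies below both orientations of another—then shows $\vs_2$ and $\vs_3$ are maximal in $O$ as well. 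Thus $O$ has at least two maximal elements, so by Lemma~\ref{lem:twoclosedgeneral} it is splitting, and its set $\sigma'$ of maximal elements is a splitting star containing the distinct separations $\vs_1,\vs_2,\vs_3$. Since $\abs{\sigma'}\ge3$ it is a branching star, and $\vr_i\le\vs_i\in\sigma'$ exhibits each $\vr_i$ below a different element of $\sigma'$.

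\textbf{The main obstacle} is the claim that the pushed-up separations $\vs_i$ form a star, together with the verification that they remain maximal in $O$. The difficulty is genuine: a consistent orientation of a chain-complete tree set need not possess any maximal element at all (as $\tau(X)$ of Example~\ref{ex:(non)chain-complete} shows for non-compact $X$), and, worse, the naive attempt of extending $\dcl(\sigma)$ while keeping some original $\vr_i$ maximal can fail to produce the required star: the separations of $\sigma$ may lie strictly below the true branching star, so keeping $\vr_i$ maximal pins $O$ to a ``wrong'' orientation in which two of the $\vr_j$ end up below a single maximal element. It is precisely to locate the correct orientation that one must first push each $\vr_i$ up to $\vs_i=\sup M_i$; verifying that these suprema interact correctly—combining nestedness, chain-completeness and the absence of trivial separations—is the technical heart of the existence half.

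\textbf{Uniqueness.} Suppose $\sigma'$ and $\sigma''$ are branching stars each having every element of $\sigma$ below a different one of its members, and let $O':=\dcl(\sigma')$ and $O'':=\dcl(\sigma'')$ be the associated splitting orientations; both contain $\dcl(\sigma)$, hence all three $\vr_i$. If $\sigma'\ne\sigma''$ then $O'\ne O''$, so some separation $u$ has $\vu\in O'$ and $\uv\in O''$. Because $O'$ is consistent and contains $\vr_i$ and $\vu$, while $O''$ is consistent and contains $\vr_i$ and $\uv$, nestedness of $\tau$ forces, for each $i$, that $\vr_i\le\vu$ or $\vr_i\le\uv$. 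By the pigeonhole principle two indices, say $1$ and $2$, lie on the same side. If $\vr_1,\vr_2\le\vu$, choose $\vs\in\sigma'$ with $\vu\le\vs$; then $\vr_1,\vr_2\le\vs$. But no separation lies below two distinct members of a star: from $\vr_1\le\vs$ and $\vr_1\le\vt$ for distinct $\vs,\vt\in\sigma'$ the star inequality $\vs\le\tv$ gives $\vr_1\le\vt$ and $\vr_1\le\tv$, making $\vr_1$ trivial. Hence each of $\vr_1,\vr_2$ lies below a unique member of $\sigma'$, which must be $\vs$ for both—contradicting distinctness. The case $\vr_1,\vr_2\le\uv$ is symmetric, with a member of $\sigma''$ above $\uv$ in place of $\vs$. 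Therefore $O'=O''$, and since a splitting star is the set of maximal elements of its splitting orientation, $\sigma'=\sigma''$.
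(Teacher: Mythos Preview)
Your existence argument is the paper's: push each $\vr_i$ up to $\vs_i:=\sup M_i$, extend $\{\vs_1,\vs_2,\vs_3\}$ via the Extension Lemma to a consistent orientation $O$ with $\vs_1$ maximal, show $\vs_2,\vs_3$ are maximal too, and invoke Lemma~\ref{lem:twoclosedgeneral}. You have correctly located the crux but you do not actually carry it out; the paper does, and quite compactly. For the star property the paper deduces it from the pairwise disjointness of the $M_i$: if, say, $\sv_2\le\vs_1$, then from $\vs_1\le\rv_3$ (as $\vs_1\in M_1$) we get $\vr_3\le\vs_2$, while $\vs_2\in M_2$ gives $\vr_3\le\sv_2$, so $\vr_3$ is trivial; the case $\vs_1\le\vs_2$ similarly forces some $\vr_i$ to be trivial. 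For maximality the paper does not run a separate case analysis per index but proves one observation covering all three at once: no $\vx$ lies strictly between $\vs_i$ and $\sv_j$ for $i\ne j$, because any such $\vx$ is nested with the third separation $r_k$, and the four possible relations force either $\vx\in M_i$ or $\xv\in M_j$ (or a triviality), whence $\vx=\vs_i$ or $\vx=\sv_j$. This single fact immediately makes all three $\vs_i$ maximal in $O$.

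Your uniqueness argument differs from the paper's but is correct. The paper invokes the standard fact that for two distinct splitting stars $\sigma_1,\sigma_2$ some $\vs\in\sigma_2$ is an upper bound for all of $\sigma_1$ save one element, so two of the $\vr_i$ end up below this single $\vs$. Your route---pick a $u$ oriented oppositely by the two splitting orientations and pigeonhole the $\vr_i$ by their relation to $u$---reaches the same contradiction. Both arguments ultimately rest on the fact that in a tree set no element can lie below two distinct members of a splitting star without being trivial.
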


\begin{proof}
Let $ \sigma=\{\vr,\vs,\vt\} $ and
\[R=\menge{\vx\in\tau\mid \vr\le\vx\le\sv\tn{ and }\vr\le\vx\le\tv}.\]
Then $R$ is a chain and by assumption $\vrdash=\sup R$ exists. As $\sv$ and $\tv$ are lower bounds for $R$ we have~$\vrdash\in R$. Similarly define
\[S=\menge{\vx\in\tau\mid \vs\le\vx\le\rv\tn{ and }\vs\le\vx\le\tv}\]
and
\[T=\menge{\vx\in\tau\mid \vt\le\vx\le\rv\tn{ and }\vt\le\vx\le\sv}\]
as well as $\vsdash=\sup S$ and~$\vtdash=\sup T$. Since $R,S$ and $T$ are disjoint $\{\vrdash,\vsdash,\vtdash\}$ forms a three-star. Observe that there is no $\vx\in\tau$ that lies strictly between $\vrdash$ and $\svdash$: for $ \vx\in\tau $ with $ \vrdash\le\vx\le\svdash $ then neither $ \vx\le\vt $ nor $ \vx\ge\vt $ since that would make $ r $ or $ s $ trivial, respectively. Thus either $ \vx\le\tv $ or $ \vx\ge\vt $, giving $ \vx\in R $ or $ \xv\in S $, respectively, and thus equality with $ \vrdash $ or $ \svdash $ by their maximality.

Similarly, there are no separations strictly between $\vsdash$ and $\tvdash$, or between $\vtdash$ and $\rvdash$.

Applying Lemma~\ref{lem:extension} to $ \{\vrdash,\vsdash,\vtdash\} $ yields a consistent orientation~$O$ in which $\vrdash$ is maximal. By the above observation $\vsdash$ and $\vtdash$ are maximal in $O$ too. It follows from Lemma~\ref{lem:twoclosedgeneral} that $ O $ is splitting, so the set $ \sigma' $ of its maximal elements is the desired branching star.

The uniqueness follows from the fact that if $ \sigma_1 $ and $ \sigma_2 $ are two distinct splitting stars, there is a $ \vs\in\sigma_2 $ which is an upper bound for all elements of~$ \sigma_1 $ but one. Hence if three separations lie below different elements of $ \sigma_1 $, at least two of them will lie below the same element of $ \sigma_2 $.
\end{proof}

This proposition is useful as it often allows us to work with splitting stars without loss of generality in the context of selections. We will also use it in the next section, especially the uniqueness part which is not important in this section.

We now show in three steps that $ \le $ on $ \tau/D $ is transitive for branch-closed $ D $ and chain-complete~$ \tau $. First we show that if a counterexample to the transitivity exists it must be a three-star with one element equivalent to the inverse of the second, as in Example~\ref{ex:non-trans}. Then we apply Proposition~\ref{star-push} to this three-star to obtain a branching star, of which we show that it is still a counterexample to the transitivity. Finally we derive a contradiction to the assumption that $ D $ is branch-closed.

\begin{LEM}\label{non-trans1}
Let~$ \tau $ be a tree set and $ D $ a selection. If $ \le $ on $ \tau/D $ is not transitive then there is a three-star $ \{\vr,\vs_1,\sv_2\} $ such that $ \vs_1\sim_D\vs_2 $ but neither $ \vr\sim_D\vs_1 $ nor~$ \vr\sim_D\sv_1 $.
\end{LEM}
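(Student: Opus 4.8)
The plan is to make the failure of transitivity explicit and then show that the four separations witnessing it must occupy exactly the three-star position of Example~\ref{ex:non-trans}. Unfolding the definition of $\le$ on $\tau/D$, non-transitivity yields $\vx,\vy,\vt,\vz\in\tau$ with $\vx\le\vy$, $\vy\sim_D\vt$, $\vt\le\vz$, and $\class{\vx}\not\le\class{\vz}$. The entire argument is then a hunt for a contradiction to $\class{\vx}\not\le\class{\vz}$: it suffices to produce any one of $\vx\le\vz$ (which gives $\class{\vx}\le\class{\vz}$ outright), $\vx\sim_D\vt$ (giving $\class{\vx}=\class{\vt}=\class{\vy}\le\class{\vz}$), or $\vz\sim_D\vt$ (giving $\class{\vx}\le\class{\vy}=\class{\vt}=\class{\vz}$). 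My two recurring tools for manufacturing these are Lemma~\ref{dist_list}(ii), which collapses a separation trapped between two $\sim_D$-equivalent ones, and Lemma~\ref{dist_list}(iii), which converts the emptiness of a $\dplus{}$ or $\dminus{}$ into a $\sim_D$-equivalence.

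First I would determine how $\vy$ and $\vt$ are nested. The aligned case $\vy\le\vt$ is immediate, since $\vx\le\vy\le\vt\le\vz$ gives $\vx\le\vz$. The two `apart' positions are the crux. If $\vy\le\tv$, then $\dplus{\vy}\sub\dplus{\tv}=\dminus{\vt}=\dminus{\vy}$, so $\dplus{\vy}=\es$ by disjointness of $\dplus{\vy}$ and $\dminus{\vy}$; Lemma~\ref{dist_list}(iii) applied along $\vx\le\vy$ then gives $\vx\sim_D\vy$, hence $\vx\sim_D\vt$. Symmetrically, $\vy\ge\tv$ forces $\dminus{\vt}=\es$, and Lemma~\ref{dist_list}(iii) applied along $\zv\le\tv$ (which follows from $\vt\le\vz$) gives $\zv\sim_D\tv$, i.e. $\vz\sim_D\vt$. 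Each outcome contradicts non-transitivity, so we are left with $\vt\le\vy$.

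Next I would locate $\vx$ relative to $\vt$. As before, $\vx\le\vt$ gives $\vx\le\vz$, and $\vt\le\vx$ gives $\vt\le\vx\le\vy$, whence $\vx\sim_D\vt$ by Lemma~\ref{dist_list}(ii); so $\vx$ and $\vt$ are nested apart. Here I would invoke a symmetry: the substitution $(\vx,\vy,\vt,\vz)\mapsto(\zv,\tv,\yv,\xv)$ again satisfies all the hypotheses, still lies in the case $\vt\le\vy$, and interchanges the two apart positions of $\vx$ and $\vt$, so I may assume $\vx\le\tv$. Then $\{\vx,\yv,\vt\}$ is a star, its three defining inequalities being exactly $\vx\le\vy$, $\vx\le\tv$ and $\vt\le\vy$. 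Setting $\vr:=\vx$, $\vs_1:=\yv$ and $\vs_2:=\tv$ exhibits it as $\{\vr,\vs_1,\sv_2\}$; Lemma~\ref{dist_list}(i) gives $\vs_1=\yv\sim_D\tv=\vs_2$, and $\vr\not\sim_D\sv_1$ is literally $\vx\not\sim_D\vy$, already known.

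The step I expect to be the main obstacle is the last required non-equivalence, $\vr\not\sim_D\vs_1$, i.e. $\vx\not\sim_D\yv$, which is also what forces the star's members to be genuinely distinct. I would dispatch it by the same emptiness mechanism: if $\vx\sim_D\yv$ then $\dplus{\vx}=\dplus{\yv}=\dminus{\vy}$, while $\vx\le\vy$ gives $\dplus{\vx}\sub\dplus{\vy}$; hence $\dminus{\vy}=\es$, so $\dminus{\vt}=\es$, and Lemma~\ref{dist_list}(iii) along $\zv\le\tv$ once more yields $\vz\sim_D\vt$, the same contradiction. Thus the real content of the proof is the repeated use of this emptiness-of-$\dplus{}$/$\dminus{}$ mechanism to eliminate every configuration except the wanted three-star; the surrounding nestedness case distinctions are routine, and care is chiefly needed only to set up the reversal symmetry cleanly so that just the position $\vx\le\tv$ has to be treated by hand.
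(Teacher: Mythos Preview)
Your approach is essentially the paper's: unfold the failure of transitivity into four separations, run a nestedness case analysis on the two $\sim_D$-equivalent ones using Lemma~\ref{dist_list} to force $\vt\le\vy$, then locate the remaining separation relative to these to read off the three-star. Your uses of Lemma~\ref{dist_list}(ii) and~(iii) are all correct, and so is your final verification of $\vx\not\sim_D\yv$.

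There is one genuine gap, in the symmetry reduction to $\vx\le\tv$. You claim that the substitution $(\vx,\vy,\vt,\vz)\mapsto(\zv,\tv,\yv,\xv)$ ``interchanges the two apart positions of $\vx$ and $\vt$'', but after the substitution the pair to be compared is $(\vx',\vt')=(\zv,\yv)$, not $(\vx,\vt)$: the case distinction in the image is between $\zv\le\vy$ and $\vy\le\zv$, which is a priori unrelated to the original distinction $\vx\le\tv$ versus $\tv\le\vx$. Nothing you have said rules out that \emph{both} the original instance and its image sit in their respective ``bad'' cases. The repair is short but not automatic: if $\tv\le\vx$ and (for the image) $\vy\le\zv$ both hold, then $\vt\le\vz\le\yv$, and together with the already established $\vt\lne\vy$ this would make $\vt$ trivial in the tree set~$\tau$, a contradiction. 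Once this observation is added the reduction is valid.

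The paper sidesteps this with a different, earlier symmetry: it observes that at most one of $\vx\sim_D\yv$ and $\vz\sim_D\yv$ can hold (otherwise $\vx\sim_D\vz$ and hence $\class{\vx}\le\class{\vz}$), and so assumes $\vx\not\sim_D\yv$ from the outset. This is exactly the fact you prove by hand at the very end; having it in hand up front, the paper can dispose of the position $\vy\ge\tv$ --- and with it the case $\tv\le\vx$, since $\tv\le\vx\le\vy$ would give $\vy\ge\tv$ --- directly via Lemma~\ref{dist_list}(iv)/(v), and never needs to split on the two apart positions of $\vx$ and $\vt$ at all.
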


\begin{proof}
Suppose there are $ \class{\vx},\class{\vy},\class{\vz}\in\tau/D $ such that $ \class{\vx}\le\class{\vy} $ and $ \class{\vy}\le\class{\vz} $ but $ \class{\vx}\not\le\class{\vz} $. Pick $ \vr\in\class{\vx},\vs_1,\vs_2\in\class{\vy} $ and $ \vt\in\class{\vz} $ with $ \vr\le\vs_1 $ and $ \vs_2\le\vt $.

At most one of $ \vr $ and $ \vt $ can be $ D $-equivalent to $ \sv_1 $ by assumption. Suppose that $ \vr\not\sim_D\sv_1 $ (the case $ \vt\not\sim_D\sv_1 $ is symmetrical).

Because~$ \tau $ is a tree set $ s_1 $ and $ s_2 $ have comparable orientations. If $ \vs_1\le\vs_2 $ then $ \vr\le\vt $, contradicting $ \class{\vx}\not\le\class{\vz} $. By Lemma~\ref{dist_list}(iv) and (v) $ \vs_1\not\le\sv_2 $ and $ \vs_1\not\ge\sv_2 $ as $ \vr\not\sim_D\vs_1,\sv_1 $. Hence $ \vs_1\ge\vs_2 $. Furthermore $ \vr\not\le\vs_2 $ as $ \vr\not\le\vt $, and $ \vr\not\ge\vs_2 $ by Lemma~\ref{dist_list}(i), so $ \{\vr,\sv_1,\vs_2\} $ must be a three-star.
\end{proof}

This completes the first of the three steps. In the next step we show that if a counterexample to the transitivity of $ \le $ exists there is a counterexample which is a branching star.

\begin{LEM}\label{non-trans2}
Let~$ \tau $ be a chain-complete tree set and $ D $ a selection. If $ \le $ on $ \tau/D $ is not transitive then there is a three-star $ \{\vr,\vs_1,\sv_2\} $ of branching points such that $ \vs_1\sim_D\vs_2 $ but neither $ \vr\sim_D\vs_1 $ nor~$\vr\sim_D\sv_1 $.
\end{LEM}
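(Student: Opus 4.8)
The plan is to start from the three-star $\{\vr,\vs_1,\sv_2\}$ provided by Lemma~\ref{non-trans1}, apply Proposition~\ref{star-push} to obtain a genuine branching star $\{\vrdash,\vsdash,\tvdash\}$ of $\tau$ whose elements lie above $\vr,\sv_1,\vs_2$ respectively, and then verify that this branching star is \emph{still} a counterexample to transitivity. The whole point is to ``push'' the abstract three-star to an honest branching star so that branch-closedness of $D$ can eventually be invoked (that final contradiction is the job of the next lemma, not this one). So here I only need to establish that the pushed-up separations retain the three crucial $D$-equivalence properties: that the two elements lying above $\sv_1$ and $\vs_2$ are mutually inverse (i.e.\ the new star again has the shape $\{\vrdash,\vsdash,\svdash\}$ with $\vsdash\sim_D\vs_2$-images matching up), and that the element above $\vr$ is $D$-equivalent to neither.

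First I would set up notation matching Proposition~\ref{star-push}: write the given three-star as $\{\vr,\sv_1,\vs_2\}$ (renaming so it literally has the form $\{\vr,\vs,\vt\}$ with $\vs=\sv_1$ and $\vt=\vs_2$), and let $\{\vrdash,\vsdash,\vtdash\}$ be the branching star with $\vr\le\vrdash$, $\sv_1\le\vsdash$, $\vs_2\le\vtdash$, each pair lying below a \emph{different} element of the branching star. The key structural fact I would extract from the proof of Proposition~\ref{star-push} is that there are no separations strictly between $\vrdash$ and $\svdash$, between $\vsdash$ and $\tvdash$, and between $\vtdash$ and $\rvdash$; equivalently, the relevant chains $R,S,T$ have the pushed-up separations as suprema and these sit immediately adjacent in the star.

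The main content is then a sequence of applications of Lemma~\ref{dist_list}. The branching point above $\sv_1$ and the one above $\vs_2$ must be checked to be mutual inverses: since $\vs_1\sim_D\vs_2$ and these lie in adjacent positions of the star, the relevant $D$-classes are forced to match, so that after renaming the branching star does have the form $\{\vrdash,\vsdash,\svdash\}$ with the two ``$s$''-branches being inverse. Concretely I would argue that $\vs_1\le\vsdash$ together with $\vs_1\sim_D\vs_2\le\vtdash$ and Lemma~\ref{dist_list}(ii),(iv),(v) pins down the $D$-classes of $\vsdash$ and $\vtdash$. For the non-equivalences I would use that $\vr\le\vrdash$, so any $D$-equivalence $\vrdash\sim_D\vsdash$ would pull back to force $\vr\sim_D\vs_1$ (via Lemma~\ref{dist_list}(ii) applied along $\vr\le\vrdash$ and along $\vsdash\ge\vs_1$, using the absence of separations strictly between $\vrdash$ and $\svdash$); since $\vr\not\sim_D\vs_1$ and $\vr\not\sim_D\sv_1$ by hypothesis, the pushed-up star inherits exactly the same non-equivalences.

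The step I expect to be the main obstacle is precisely this transfer of the $D$-equivalence data from the original three-star to the pushed-up branching star. The subtlety is that Proposition~\ref{star-push} only guarantees $\vr\le\vrdash$ etc.\ and the adjacency (no separation strictly between neighbouring star elements), but \emph{not} that $\vr\sim_D\vrdash$; indeed $D$-separations could sit between $\vr$ and $\vrdash$. I would therefore lean on Lemma~\ref{dist_list}(iv) and (v), which say that once an equivalence holds at the top of a chain it propagates downward, to show that no $\vd\in D$ can distinguish the original separations from their pushed-up counterparts in a way that destroys the three required properties; the adjacency statements from Proposition~\ref{star-push} are what rule out intervening distinguishing separations that would otherwise break the argument. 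Getting this bookkeeping exactly right, and in particular confirming that the ``bad'' pair really comes out as mutual inverses so that $\{\vr,\vs_1,\sv_2\}$ becomes $\{\vrdash,\vsdash,\svdash\}$ with branching points, is where the care is needed.
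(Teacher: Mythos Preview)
Your overall strategy---start from Lemma~\ref{non-trans1}, apply Proposition~\ref{star-push}, then use Lemma~\ref{dist_list} to transfer the $D$-equivalence data---is exactly the paper's approach. But two points in your plan are off.

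First, a misreading of the target: in the conclusion $\{\vr,\vs_1,\sv_2\}$ the separations $s_1$ and $s_2$ are \emph{distinct}, and the requirement is $\vs_1\sim_D\vs_2$. You want two of the pushed-up star elements to be $D$-equivalent after inverting one of them, \emph{not} to be literal inverses of each other. A proper star cannot contain both $\vsdash$ and $\svdash$, so the form $\{\vrdash,\vsdash,\svdash\}$ you aim for is impossible.

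Second, the adjacency information from Proposition~\ref{star-push} is a red herring; you do not need it, and it would not help with $D$-elements sitting between $\vr$ and its pushed-up counterpart anyway. The paper's argument is cleaner. With the original three-star written as $\{\vx,\vy_1,\yv_2\}$ and the pushed-up star as $\{\vr,\vs_1,\sv_2\}$ (so $\vx\le\vr$, $\vy_1\le\vs_1$, $\yv_2\le\sv_2$), the star relation $\vs_1\le\vs_2$ yields the chain $\vy_1\le\vs_1\le\vs_2\le\vy_2$, and Lemma~\ref{dist_list}(ii) alone gives $\vs_1\sim_D\vs_2$ in one stroke. For the non-equivalences one argues contrapositively: if $\vr\sim_D\vs_1$, then since $\vx\le\vr\le\sv_1$ (star relation), Lemma~\ref{dist_list}(iv) forces $\vx\sim_D\vr$, whence $\vx\sim_D\vr\sim_D\vs_1\sim_D\vy_1$, contradicting the hypothesis; the case $\vr\sim_D\sv_1$ is handled the same way using $\vr\le\vs_2$ and $\sv_1\sim_D\sv_2$. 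No adjacency, and no worry about what lies between $\vx$ and $\vr$.
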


\begin{proof}
By Lemma~\ref{non-trans1} there is a three-star $ \{\vx,\vy_1,\yv_2\} $ with $ \vy_1\sim_D\vy_2 $ and $ \vx\not\sim_D\vy_1,\yv_1 $. An application of Proposition~\ref{star-push} yields a branching star $ \sigma $ with a three-star $ \{\vr,\vs_1,\sv_2\}\sube\sigma $ for which $ \vx\le\vr $ and~$ \vy_1\le\vs_1\le\vs_2\le\vy_2 $. From Lemma~\ref{dist_list}(ii) it follows that~$ \vy_1\sim_D\vs_1\sim_D\vs_2\sim_D\vy_2 $. Furthermore Lemma~\ref{dist_list}(i) and (iv) imply that~$ \vr\not\sim_D\vs_1 $ and $ \vr\not\sim_D\sv_1 $, as otherwise $ \vx\sim_D~\vy_1 $ or $ \vx\sim_D\yv_1 $ contrary to assumption. Thus $ \{\vr,\vs_1,\sv_2\} $ is the desired three-star.
\end{proof}

For the third step we need to show that there are elements of $ D $ that allow us to apply the branch-closedness of~$ D $ to derive a contradiction.

\begin{LEM}\label{non-trans3}
Let~$ \tau $ be a chain-complete tree set and $ D $ a selection. Let $ \{\vr,\vs_1,\sv_2\} $ be a three-star in~$ \tau $ with $ \vs_1\sim_D\vs_2 $ but neither $ \vr\sim_D\vs_1 $ nor $ \vr\sim_D\sv_1 $. Then there are $ \vd_1,\vd_2\in D  $ with $ \vd_1\lne\vs_1 $ and $ \dv_2\lne\sv_2 $.
\end{LEM}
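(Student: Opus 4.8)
The plan is to obtain both of the required separations of $D$ from a single tool, the contrapositive of Lemma~\ref{dist_list}(iii), which reads: if $\vr\le\vt$ but $\vr\not\sim_D\vt$, then $\dplus{\vt}\ne\es$. First I would record what the star hypothesis supplies. Since $\{\vr,\vs_1,\sv_2\}$ is a star we have $\vr\le(\vs_1)^*=\sv_1$ and $\vr\le(\sv_2)^*=\vs_2$; these two comparabilities are all I shall use from the star. The claim that there is $\vd_1\in D$ with $\vd_1\lne\vs_1$ amounts to $\dplus{\vs_1}\ne\es$, and the claim that there is $\vd_2\in D$ with $\vd_2\lne\sv_2$ amounts to $\dplus{\sv_2}=\dminus{\vs_2}\ne\es$.

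For $\vd_1$ I would argue through $\vs_2$. As $\sim_D$ is an equivalence relation, $\vs_1\sim_D\vs_2$ together with $\vr\not\sim_D\vs_1$ forces $\vr\not\sim_D\vs_2$. Combined with $\vr\le\vs_2$ from the star, the contrapositive of Lemma~\ref{dist_list}(iii) yields some $\vd\in D$ with $\vd\lne\vs_2$, that is $\vd\in\dplus{\vs_2}$. Since $\vs_1\sim_D\vs_2$ gives $\dplus{\vs_1}=\dplus{\vs_2}$ by the definition of $\sim_D$, this $\vd$ already lies in $\dplus{\vs_1}$, and I would set $\vd_1:=\vd$.

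For $\vd_2$ I would argue through $\sv_1$, where the required non-equivalence is handed to us directly. From $\vr\not\sim_D\sv_1$ and $\vr\le\sv_1$ the contrapositive of Lemma~\ref{dist_list}(iii) produces some $\vd\in D$ with $\vd\lne\sv_1$, i.e. $\vd\in\dplus{\sv_1}=\dminus{\vs_1}$. The definition of $\sim_D$ turns $\vs_1\sim_D\vs_2$ into $\dminus{\vs_1}=\dminus{\vs_2}$, so $\vd\in\dminus{\vs_2}=\dplus{\sv_2}$; hence $\vd\lne\sv_2$ and I would set $\vd_2:=\vd$.

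The only thing that needs care — and the reason this is not an immediate one-line application of Lemma~\ref{dist_list}(iii) — is that the star delivers $\vr\le\sv_1$ and $\vr\le\vs_2$, but neither $\vr\le\vs_1$ nor $\vr\le\sv_2$; so the lemma cannot be applied to the two pairs $\vr,\vs_1$ and $\vr,\sv_2$ whose distinguishing elements we actually want to produce. The manoeuvre that unblocks this is to replace $\vs_1$ and $\sv_2$ by the $\sim_D$-equivalent separations $\vs_2$ and $\sv_1$, to which $\vr$ is comparable in the direction Lemma~\ref{dist_list}(iii) demands, and then to carry the resulting element of $D$ back across the equivalence using the equalities $\dplus{\vs_1}=\dplus{\vs_2}$ and $\dminus{\vs_1}=\dminus{\vs_2}$. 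It is worth noting that this argument uses only that $\tau$ is a tree set and $D$ a selection; chain-completeness of $\tau$ plays no role here.
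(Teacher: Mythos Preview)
Your argument is correct, and it is genuinely different from the paper's. A small remark first: you read the second conclusion as ``$\vd_2\lne\sv_2$'', whereas the lemma as printed says ``$\dv_2\lne\sv_2$''. This appears to be a typo in the paper: its own proof (``Repeating this argument for a $\vd_2\in D$ that distinguishes $\vr$ and $\sv_2$'') yields precisely $\vd_2\in\dplus{\sv_2}$, i.e.\ $\vd_2\lne\sv_2$, and it is this version that feeds into the branch-closedness condition $\vd_1\le\vs_1\le\dv_2$ used in Lemma~\ref{dist_transitive}. So you are proving the intended statement.

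As for the method: the paper picks a $\vd\in D$ distinguishing $\vr$ and $\vs_1$ and then runs a four-case analysis on which piece of $\bigl(\dplus{\vr}\triangle\dplus{\vs_1}\bigr)\cup\bigl(\dminus{\vr}\triangle\dminus{\vs_1}\bigr)$ it lies in, eliminating three cases via the star relations and the selection axiom to force $\vd\in\dplus{\vs_1}$. You bypass this entirely by observing that the star gives $\vr\le\vs_2$ and $\vr\le\sv_1$ (not $\vr\le\vs_1$ or $\vr\le\sv_2$), applying the contrapositive of Lemma~\ref{dist_list}(iii) to those two pairs, and then carrying the resulting elements across via $\dplus{\vs_1}=\dplus{\vs_2}$ and $\dminus{\vs_1}=\dminus{\vs_2}$. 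This is shorter, avoids the case split, and --- as you note --- makes it plain that chain-completeness plays no role here; the paper's proof does not use it either, but your route makes that transparent.
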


\begin{proof}
Let $ \vd\in D $ distinguish $ \vr $ and~$ \vs_1 $; we will show that~$ \vd\lne\vs_1 $. This $ \vd $ cannot lie in $ \dplus{\vr}\setminus\dplus{\vs_1} $ as then it would also distinguish $ \vs_1 $ and~$ \vs_2 $. Furthermore $ \dminus{\vs_1}=\dminus{\vs_2}\sube\dminus{\vr} $ by the star property, so $ \vd $ cannot lie in $ \dminus{\vs_1}\setminus\dminus{\vr} $ either. If $ \vd\in\dminus{\vr}\setminus\dminus{\vs_1} $ then any $ \ve\in\sigma\cap D $ with $ \ve\ne\vd $ would distinguish $ \vs_1 $ and $ \vs_2 $, where $ \sigma $ is the splitting star containing~$ \vd $. Therefore $ \vd\in\dplus{\vs_1}\setminus\dplus{\vr} $, so in particular~$ \vd\lne\vs_1 $.

Repeating this argument for a $ \vd_2\in D $ that distinguishes $ \vr $ and $ \sv_2 $ shows $ \dv_2\lne\sv_2 $ and hence the claim.
\end{proof}

Finally we put the above lemmas together to prove that $ \le $ in $ \tau/D $ is transitive.

\begin{LEM}\label{dist_transitive}
Let~$ \tau $ be a chain-complete tree set and $ D $ a branch-closed selection. Then $ \le $ on $ \tau/D $ is transitive.
\end{LEM}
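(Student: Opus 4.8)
I would argue by contradiction, chaining the three preceding lemmas. Suppose $\le$ on $\tau/D$ is not transitive. Then Lemma~\ref{non-trans2} hands me a three-star $\{\vr,\vs_1,\sv_2\}$ consisting of branching points, with $\vs_1\sim_D\vs_2$ but $\vr\not\sim_D\vs_1$ and $\vr\not\sim_D\sv_1$. The whole plan is to force the branching point $\vs_1$ into $D$, and then to read off an immediate contradiction with $\vs_1\sim_D\vs_2$.

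To place $\vs_1$ inside $D$ I would invoke Lemma~\ref{non-trans3}, which supplies two elements of $D$, one lying below $\vs_1$ and one lying below $\sv_2$. Write $\vd_1\in D$ for the former, so $\vd_1\le\vs_1$, and let $\ve\in D$ denote the latter, so $\ve\le\sv_2$. The first of these already bounds $\vs_1$ from below by an element of $D$. For the matching upper bound I would use the star property: since $\{\vr,\vs_1,\sv_2\}$ is a star we have $\sv_2\le\sv_1$, whence $\ve\le\sv_2\le\sv_1$ and therefore $\vs_1\le\ev$. Thus $\vd_1\le\vs_1\le\ev$ with $\vd_1,\ve\in D$, which is exactly the bracketing demanded by the definition of a branch-closed selection. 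As $\vs_1$ is a branching point and $D$ is branch-closed, it follows that $\vs_1\in D$.

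Finally I would cash this in. Because $\{\vr,\vs_1,\sv_2\}$ is a genuine (proper) three-star we have $s_1\ne s_2$, and the star relation gives $\vs_1\le\vs_2$, hence $\vs_1\lne\vs_2$. Therefore $\vs_1\in\dplus{\vs_2}$, while trivially $\vs_1\notin\dplus{\vs_1}$. So the element $\vs_1\in D$ itself distinguishes $\vs_1$ from $\vs_2$, contradicting $\vs_1\sim_D\vs_2$. This contradiction shows that $\le$ on $\tau/D$ must be transitive.

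The delicate point is the orientation bookkeeping in the middle step: one must convert the bound on $\sv_2$ coming out of Lemma~\ref{non-trans3} into a bound on $\sv_1$ via the star inequality $\sv_2\le\sv_1$, so that $\vs_1$ lands in precisely the one-sided bracket $\vd_1\le\vs_1\le\ev$ that branch-closedness requires; reversing a single inverse here would break the match with the definition and collapse the argument. Everything else is a direct threading of the three prepared lemmas, with the punchline being the observation that membership $\vs_1\in D$ is self-defeating, since a separation always distinguishes itself from any unoriented separation strictly above it.
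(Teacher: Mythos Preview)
Your argument is correct and essentially identical to the paper's proof: both invoke Lemma~\ref{non-trans2} to obtain the branching three-star, Lemma~\ref{non-trans3} to bracket $\vs_1$ between two elements of $D$, then use branch-closedness to force $\vs_1\in D$ and observe $\vs_1\in\dplus{\vs_2}\setminus\dplus{\vs_1}$. The only cosmetic difference is that the paper writes the bracket directly as $\vd_1\lne\vs_1\le\vs_2\lne\vd_2$, whereas you dualize via $\sv_2\le\sv_1$ to reach the equivalent $\vd_1\le\vs_1\le\ev$.
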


\begin{proof}
Suppose $ \le $ is not transitive. Then by Lemma~\ref{non-trans2} there is a three-star $ \{\vr,\vs_1,\sv_2\} $ of branching points such that $ \vs_1\sim_D\vs_2 $ but neither $ \vr\sim_D\vs_1 $ nor~$ \vr\sim_D\sv_1 $. Applying Lemma~\ref{non-trans3} to this star yields $ \vd_1,\vd_2\in D $ with~$ \vd_1\lne\vs_1\le\vs_2\lne\vd_2 $. As $ D $ is branch-closed and $ \vs_1 $ a branching point this implies $ \vs_1\in D $; but then $ \vs_1\in\dplus{\vs_2}\setminus\dplus{\vs_1} $, contrary to the assumption that~$ \vs_1\sim_D\vs_2 $.

Hence $ \le $ is transitive as claimed.
\end{proof}

Therefore $ \le $ on $ \tau/D $ is a partial order, so $ \tau/D $ is a nested separation system for chain-complete~$ \tau $ and branch-closed~$ D $. To prove that $ \tau/D $ is a tree set it is thus left to show that it does not contain any trivial elements.

The next example shows that $ \tau/D $ may well contain a trivial element even in cases where $ \le $ is a partial order. However, this too exploits that $ D $ is not branch-closed, and we will subsequently prove that $ \tau/D $ is indeed a tree set for branch-closed~$ D $.

\begin{EX}\label{ex:trivial}
Let $ T $ be the following graph.

\begin{center}

\begin{tikzpicture}[scale=0.5]
\tikzstyle{every node}=[circle, draw, fill=black!20, inner sep=1.5pt, minimum width=0.7pt]
	\node (1) at (0,0) {1};
	\node (2) at (2,1) {2};
	\node (3) at (4,1) {3};
	\node (4) at (6,0) {4};
	\node (5) at (2,3) {5};
	\node (6) at (2,5) {6};
	\node (7) at (4,3) {7};
	\node (8) at (4,5) {8};

	\draw (1) -- (2) -- (3) -- (4);
	\draw (2) -- (5) -- (6);
	\draw (3) -- (7) -- (8);
\end{tikzpicture}

\end{center}

The edge tree set $ \tau(T) $ is regular. For the selection $ D=\menge{(2,5),(6,5),(3,7),(8,7)} $ we have $ (1,2)\sim_D(4,3) $ and thus $ \class{(1,2)}\le\class{(2,3)},\class{(3,2)} $. As $ D $ distinguishes $ (1,2) $ from $ (2,3) $ and from $ (3,2) $ this means that $ \class{(1,2)} $ is trivial in~$ \tau(T)/D $.
\end{EX}

The proof that $ \tau/D $ has no trivial elements if~$ \tau $ is chain-complete and $ D $ is branch-closed will again be carried out in multiple steps. First we show that the configuration from Example~\ref{ex:trivial} is the only possible type of counterexample. Following that we prove that if this counterexample occurs there are elements of $ D $ we can use to apply the branch-closedness of $ D $ with.

\begin{LEM}\label{non-triv1}
Let~$ \tau $ be a chain-complete tree set and $ D $ a branch-closed selection. If $ \tau/D $ contains a trivial element then there are $ \vr,\vs,\vx\in\tau $ with $ \vr\le\vx\le\sv $ and $ \vr\sim_D\vs $ but neither $ \vr\sim_D\vx $ nor~$ \vr\sim_D\xv $.
\end{LEM}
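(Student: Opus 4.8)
The plan is to unpack the triviality of the offending class into two witnessing inequalities and then \emph{bridge} the two witnesses using nestedness together with Lemma~\ref{dist_list}. Suppose $\class{\vr}$ is trivial in $\tau/D$ with witness $\class{\vt}$, so that $\class{\vr}\le\class{\vt}$ and $\class{\vr}\le\class{\tv}$ while $\class{r}\ne\class{t}$; in particular $\vr\not\sim_D\vt$ and $\vr\not\sim_D\tv$. By the definition of $\le$ on $\tau/D$ I can pick representatives $\vr_1\le\vt_1$ and $\vr_2\le\tv_2$ with $\vr_1,\vr_2\sim_D\vr$ and $\vt_1,\vt_2\sim_D\vt$. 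The aim is to find a single $\vx$ with $\vr_1\le\vx\le\rv_2$ (or the symmetric configuration): since any such $\vx$ is $\sim_D\vt$, this forces $\vr\not\sim_D\vx$ and $\vr\not\sim_D\xv$, so that $\vr_1,\vr_2,\vx$ are as required.

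First I would compare $t_1$ and $t_2$, which have comparable orientations as $\tau$ is nested. If $\vt_1\le\vt_2$, then $\vr_1\le\vt_1\le\vt_2\le\rv_2$ (the last step from $\vr_2\le\tv_2$) and $\vx:=\vt_1$ works; the move $\vx:=\vt_2$ handles the case $\vr_1\le\vt_2$. The remaining ``crossing'' configurations I would dispatch with Lemma~\ref{dist_list}(ii): for instance $\vt_2\le\vr_1\le\vt_1$ yields $\vr_1\sim_D\vt_1\sim_D\vt$, and $\vr_1\ge\tv_2$ together with $\vr_2\le\tv_2$ yields $\rv_1\le\vt_2\le\rv_2$, hence $\vt_2\sim_D\rv$; both contradict $\vr\not\sim_D\vt,\tv$. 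Running the same comparison for $\vr_2$ against $t_1$ (again using Lemma~\ref{dist_list}(ii)) either produces $\vr_2\le\tv_1$, in which case $\vx:=\vt_1$ finishes, or else leaves exactly one stubborn configuration, the one realized in Example~\ref{ex:trivial}: both representatives lie below a common separation, with $\vr_1,\vr_2\le\tv_2$, $\vr_2\le\vt_1$, and still $\vr_1\le\vt_1$.

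The hard part will be excluding this stubborn configuration. Here $\vr_1\le\vt_1$ and $\vr_1\le\tv_2$ with $\vt_1\sim_D\vt_2$ force $\dplus{\vr_1}\subseteq\dplus{\vt_1}\cap\dminus{\vt_1}=\emptyset$, so no element of $D$ lies strictly below $\vr$. I would then exploit that $D$ is a selection. Since $\vr_2\le\vt_1$ but $\vr_2\not\sim_D\vt_1$, some $\vd\in D$ distinguishes them, and because $\dplus{\vr_2}=\emptyset$ this $\vd$ must satisfy $\vd\lne\vt_1$, hence also $\vd\lne\vt_2$ as $\vt_1\sim_D\vt_2$. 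Note first that $\vr_2\notin D$: otherwise $\vr_2\le\vt_1$ and $\vr_2\le\tv_2$ would place $\vr_2$ simultaneously in $\dplus{\vt_1}$ and $\dminus{\vt_1}$, which are disjoint. Now $\vd\lne\vt_2$ together with $\vr_1\le\tv_2$ gives $\vd\le\rv_2$, so $\vr_2\le\dv$; chasing the splitting star $\sigma$ of $\vd$—whose intersection with $D$ has size $\ne1$, producing a second $\ve\in\sigma\cap D$ with $\ve\le\dv$—then forces $\ve$ strictly below $\vr$, contradicting $\dplus{\vr}=\emptyset$. Thus the stubborn configuration cannot occur and one of the earlier cases applies.

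The main obstacle is precisely this last step. The clean cases fall out immediately from Lemma~\ref{dist_list}(ii), (iv), (v), but the stubborn configuration of Example~\ref{ex:trivial} resists any direct choice of $\vx$ and must instead be ruled out; this is where the selection property (together with the disjointness $\dplus{\vs}\cap\dminus{\vs}=\emptyset$, and, if one prefers to argue through an actual branching point via Proposition~\ref{star-push}, the chain-completeness of $\tau$ and branch-closedness of $D$) does the real work. I expect the star-chasing to require the most care, since it is the only point at which the combinatorics of the selection, rather than mere nestedness, is used.
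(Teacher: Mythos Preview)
Your overall strategy---pick representatives for the two inequalities witnessing triviality and run a nestedness case analysis using Lemma~\ref{dist_list}---is exactly the paper's approach, and your treatment of the clean cases is correct and matches it.

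The gap is in your handling of the stubborn configuration. Your star-chasing argument does not go through as written. Granting the (unjustified) assumption that the distinguisher $\vd$ lies in some splitting star~$\sigma$, from $\ve\le\dv$ and $\vr_2\le\dv$ you cannot conclude $\ve\lne\vr_2$: these two inequalities only say that $\ve$ and $\vr_2$ both point towards~$\sigma$, not that one lies below the other. The second element $\ve\in\sigma\cap D$ need not be the element of~$\sigma$ that~$\vr_2$ lies below; in a large splitting star $\vr_2$ may lie below some $\vf\in\sigma$ with $\vf\notin D$, and then nothing forces an element of~$D$ strictly below~$\vr_2$. So the selection property alone is not enough here.

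The fix is precisely the alternative you mention only parenthetically. In the stubborn case one has (in your notation) $\vr_1\le\vt_1$, $\vr_1\le\tv_2$, and $\vt_2\le\vt_1$, so $\{\vr_1,\vt_2,\tv_1\}$ is a three-star with $\vt_1\sim_D\vt_2$ but $\vr_1\not\sim_D\vt_1,\tv_1$. This is exactly the forbidden configuration of Lemma~\ref{non-trans1}, which was already shown impossible (via Lemma~\ref{non-trans2}, Lemma~\ref{non-trans3}, and branch-closedness) in the proof of Lemma~\ref{dist_transitive}. The paper simply invokes that earlier work in one line; this is where chain-completeness and branch-closedness of~$D$ actually enter, through Proposition~\ref{star-push}, not through a direct appeal to the selection axiom.
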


\begin{proof}
If $ \tau/D $ contains a trivial element then there are $ \vr,\vx\in\tau $ with $ \vr\le\vx $ and $ \class{\vr}\lne\class{\vx},\class{\xv} $ in~$ \tau/D $. Then there are $ \vs\in\class{\vr},\vy\in\class{\vx} $ with $ \vs\le\yv $.
 
As neither $ \vr\sim_D\vx $ nor $ \vr\sim_D\xv $ by assumption Lemma~\ref{dist_list} (iv) and (v) imply $ \vx\not\ge\yv $ and $ \vx\not\le\yv $. Furthermore if $ \vx\le\vy $ then $ \vr\le\vx\le\vy\le\sv $, so we are done.

This leaves the case~$ \vx\ge\vy $. If $ \vr\le\yv $ then $ \{\vr,\vy,\xv\} $ is a three-star as in Lemma~\ref{non-trans2} and~\ref{non-trans3}, which we know is impossible as shown in the proof of Lemma~\ref{dist_transitive} if $ D $ is branch-closed. By Lemma~\ref{dist_list}(ii) $ \vr\ge\vy $ would imply $ \vr\sim_D\vx $, so this is also impossible. Hence the only relation $ r $ and $ y $ can have is $ \vr\le\vy $, and then $ \vr\le\vy\le\sv $ as desired.
\end{proof}

The next step is to find $ \vd_1,\vd_2\in D $ with certain relations to the separations from Lemma~\ref{non-triv1}, which we can later apply the assumption that $ D $ is branch-closed to so as to obtain a contradiction.

\begin{LEM}\label{non-triv2}
Let~$ \tau $ be a tree set and $ D $ a selection. Let $ \vr,\vs,\vx\in\tau $ with $ \vr\le\vx\le\sv $ and $ \vr\sim_D\vs $ but neither $ \vr\sim_D\vx $ nor~$ \vr\sim_D\xv $. Then there are $ \vd_1,\vd_2\in D $ with
\[ \vd_1\lne\rv,\sv,\vx,\qquad\vd_2\lne\rv,\sv,\xv. \]
\end{LEM}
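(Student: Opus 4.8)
The plan is to reduce both conclusions to a single nonemptiness statement and then settle that statement by a short nestedness case analysis. First I would record what the hypotheses say about the sets $\dplus{\cdot}$ and $\dminus{\cdot}$. From $\vr\le\vx\le\sv$ and $\vr\sim_D\vs$ we get
\[ \dplus{\vr}\sube\dplus{\vx}\sube\dplus{\sv}=\dminus{\vs}=\dminus{\vr}, \]
and since $\dplus{\vr}\cap\dminus{\vr}=\es$ this forces $\dplus{\vr}=\dplus{\vs}=\es$. The same inequalities give $\dplus{\vx}\sube\dminus{\vr}=\dminus{\vs}$ and $\dminus{\vx}\sube\dminus{\vr}$. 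Hence any $\vd_1\in\dplus{\vx}$ already satisfies $\vd_1\lne\vx$, $\vd_1\lne\sv$ and $\vd_1\lne\rv$, so producing $\vd_1$ is exactly the task of showing $\dplus{\vx}\ne\es$; similarly producing $\vd_2$ is the task of showing $\dminus{\vx}=\dplus{\xv}\ne\es$.

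Next I would note that the two tasks are symmetric. The triple $(\vs,\vr,\xv)$ satisfies the hypotheses of the lemma in place of $(\vr,\vs,\vx)$: we have $\vs\le\xv\le\rv$, we have $\vs\sim_D\vr$, and since $\sim_D$ is an equivalence relation with $\vr\sim_D\vs$ neither $\vs\sim_D\xv$ nor $\vs\sim_D\vx$ can hold. A $\vd_1$ produced for this triple is a separation with $\vd_1\lne\xv,\rv,\sv$, which is precisely the $\vd_2$ asked for. So it suffices to prove $\dplus{\vx}\ne\es$.

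The heart of the argument is the identity $\dplus{\vx}=\dminus{\vr}\sm\dminus{\vx}$. The inclusion $\sube$ is immediate from $\dplus{\vx}\sube\dminus{\vr}$ and $\dplus{\vx}\cap\dminus{\vx}=\es$. For $\supe$ I would take $\vd\in\dminus{\vr}\sm\dminus{\vx}$, so that $\vd\lne\rv$ and $\vd\lne\sv$ but $\vd\not\lne\xv$, and use that $d$ and $x$ are nested. The option $\vd\lne\xv$ is excluded. The option $\vx\le\vd$ yields $\vr\le\vx\le\vd$ together with $\vd\le\rv$, hence $\vr\le\vd$ and $\vr\le\dv$ with $r\ne d$, making $\vr$ trivial. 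The option $\xv\le\vd$ yields $\vs\le\xv\le\vd$ together with $\vd\le\sv$, hence $\vs\le\vd$ and $\vs\le\dv$ with $s\ne d$, making $\vs$ trivial. As $\tau$ is a tree set both are impossible, leaving $\vd\lne\vx$, i.e. $\vd\in\dplus{\vx}$.

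Finally, $\vr\not\sim_D\vx$ together with $\dplus{\vr}=\es$ means that $\dplus{\vx}\ne\es$ or $\dminus{\vx}\ne\dminus{\vr}$; in the latter case $\dminus{\vr}\sm\dminus{\vx}\ne\es$, which by the identity again gives $\dplus{\vx}\ne\es$. Either way $\dplus{\vx}\ne\es$, which produces $\vd_1$, and the symmetry step then delivers $\vd_2$. I expect the main obstacle to be the inclusion $\supe$ above: while the two triviality contradictions are the clean core of it, one must be careful with the bookkeeping of strict versus non-strict inequalities and with the borderline case $d=x$, verifying that the same contradictions still arise there.
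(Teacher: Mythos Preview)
Your proof is correct and follows essentially the same route as the paper: both begin by deducing $\dplus{\vr}=\dplus{\vs}=\emptyset$ from $\vr\le\sv$ and $\vr\sim_D\vs$, and then argue that any element of $D$ distinguishing $\vr$ from $\vx$ (respectively $\vs$ from $\xv$) must lie in $\dplus{\vx}\cap\dminus{\vr}$ (respectively $\dplus{\xv}\cap\dminus{\vs}$), using nestedness with $x$ and the absence of trivial elements. Your explicit identity $\dplus{\vx}=\dminus{\vr}\setminus\dminus{\vx}$ and the symmetry reduction to a single case are a somewhat more careful packaging of what the paper compresses into the line ``either $\vd\lne\rv,\sv$ or $\dv\lne\rv,\sv$ for all $\vd\in D$''.
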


\begin{proof}
By $ \vr\le\sv $ and $ \vr\sim_D\vs $ we have $ \dplus{\vr}=\dplus{\vs}\sube\dminus{\vr} $ and hence~$ \dplus{\vr}=\dplus{\vs}=\emptyset $. Therefore either $ \vd\lne\rv,\sv $ or $ \dv\lne\rv,\sv $ for all~$ \vd\in D $. Thus if $ \vd_1\in D $ distinguishes $ \vr $ and $ \vx $ then $ \vd_1\lne\rv,\vx $ is the only possibility, and if $ \vd_2\in D $ distinguishes $ \vs $ and $ \xv $ then $ \vd_2\lne\sv,\xv $ is the only possibility. The claim now follows from the assumption that $ \vr\sim_D\vs $ but neither $ \vr\sim_D\vx $ nor~$ \vs\sim_D\xv $.
\end{proof}

Finally we combine the above lemmas and use Proposition~\ref{star-push} to prove that $ \tau/D $ has no trivial elements.

\begin{LEM}\label{dist_nontriv}
Let~$ \tau $ be a chain-complete tree set and $ D $ a branch-closed selection. Then $ \tau/D $ contains no trivial element.
\end{LEM}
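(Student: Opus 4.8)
The plan is to assume that $\tau/D$ has a trivial element and derive a contradiction by combining Lemmas~\ref{non-triv1} and~\ref{non-triv2} with Proposition~\ref{star-push}. So suppose $\tau/D$ contains a trivial element. By Lemma~\ref{non-triv1} there are $\vr,\vs,\vx\in\tau$ with $\vr\le\vx\le\sv$ and $\vr\sim_D\vs$ but neither $\vr\sim_D\vx$ nor $\vr\sim_D\xv$, and by Lemma~\ref{non-triv2} there are $\vd_1,\vd_2\in D$ with $\vd_1\lne\rv,\sv,\vx$ and $\vd_2\lne\rv,\sv,\xv$. First I would check that $\menge{\vr,\vd_1,\vd_2}$ is a three-star: the two pairs containing $\vr$ are handled by $\vd_1\le\rv$ and $\vd_2\le\rv$ together with their inverses, while for the remaining pair one uses $\vd_1\le\vx\le\dv_2$, which gives $\vd_1\le\dv_2$ and hence also $\vd_2\le\dv_1$. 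Here the three separations are genuinely distinct: any coincidence among $\vr,\vd_1,\vd_2$ or their inverses would force one of them to be trivial in~$\tau$, which a tree set forbids.

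Next I would apply Proposition~\ref{star-push} to this three-star to obtain a branching star $\sigma'$ of~$\tau$ in which $\vr,\vd_1,\vd_2$ lie below distinct elements, and write $\vc$ for the element of $\sigma'$ above $\vd_2$. By the construction in the proof of Proposition~\ref{star-push}, $\vc=\sup\menge{\vy\in\tau\mid \vd_2\le\vy\le\rv\tn{ and }\vy\le\dv_1}$ and belongs to this set, so $\vd_2\le\vc\le\rv$ and $\vc\le\dv_1$. The decisive observation is that $\xv$ itself lies in the defining set: indeed $\vd_2\le\xv$ since $\vd_2\lne\xv$, while $\xv\le\rv$ follows from $\vr\le\vx$ and $\xv\le\dv_1$ from $\vd_1\le\vx$. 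Hence $\xv\le\vc$. Now $\vd_2\le\vc\le\dv_1$ with $\vd_1,\vd_2\in D$ and $\vc$ a branching point, so branch-closedness of~$D$ forces $\vc\in D$.

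Finally I would show that $\vc$ distinguishes $\vr$ and $\vs$, contradicting $\vr\sim_D\vs$. From $\vc\le\rv$ we get $\vc\in\dminus{\vr}$ once $c\neq r$ is verified: $\vc=\rv$ would, together with the element of $\sigma'$ above $\vr$, place both orientations of $r$ into the proper star $\sigma'$, and $\vc=\vr$ would make $\vd_2\le\vr,\rv$ trivial. It then remains to see that $\vc\not\le\sv$, so that $\vc\notin\dminus{\vs}$. Suppose instead $\vc\le\sv$; then $\xv\le\vc\le\sv$, so $\xv\le\sv$ and therefore $\vs\le\vx$, while $\vx\le\sv$ is the same as $\vs\le\xv$, whence $\vs$ would be trivial with witness~$x$ (note $s\neq x$, as otherwise $\vr\sim_D\vx$ or $\vr\sim_D\xv$) — impossible in the tree set~$\tau$. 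Thus $\vc\le\rv$ but $\vc\not\le\sv$, i.e.\ $\vc\in\dminus{\vr}\sm\dminus{\vs}$, contradicting $\vr\sim_D\vs$. I expect the main obstacle to be exactly this last location argument: everything hinges on the observation $\xv\le\vc$, which pins $\vc$ to the far side of the branching star from $\vr$ and thereby turns $\vc$ into an element of $D$ separating $\vr$ from $\vs$; without it one cannot exclude the possibility that $\vr$ and $\vs$ sit below the same element of $\sigma'$.
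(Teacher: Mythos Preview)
Your proof is correct and follows the same overall strategy as the paper: invoke Lemmas~\ref{non-triv1} and~\ref{non-triv2}, feed a suitable three-star into Proposition~\ref{star-push}, and use branch-closedness to place the resulting branching point in $D$, where it then distinguishes $\vr$ from~$\vs$.

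The only difference is the choice of three-star. The paper applies Proposition~\ref{star-push} to $\{\vx,\vd_2,\vs\}$ and takes the element $\vb$ of the branching star above~$\vx$; then $\vd_1\le\vx\le\vb$ and, purely by the star property, $\vb\le\dv_2$ and $\vb\le\sv$, so branch-closedness gives $\vb\in D$ and $\vb$ visibly lies in $\dminus{\vs}\sm\dminus{\vr}$. You instead apply Proposition~\ref{star-push} to $\{\vr,\vd_1,\vd_2\}$ and take the element $\vc$ above~$\vd_2$. This makes the sandwich $\vd_2\le\vc\le\dv_1$ immediate, but to relate $\vc$ to $\vs$ you need $\xv\le\vc$, and for that you reach into the \emph{proof} of Proposition~\ref{star-push} (using that $\vc=\sup T$ and $\xv\in T$). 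That step is valid, since the proof indeed shows the three constructed maxima belong to their defining sets, but it is a dependence on the construction rather than the statement. The paper's choice of three-star avoids this: by putting $\vs$ into the star from the outset, the relation to $\vs$ comes for free from the star property alone.
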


\begin{proof}
Suppose $ \tau/D $ contains a trivial element. From Lemma~\ref{non-triv1} and~\ref{non-triv2} it follows that there are $ \vr,\vs,\vx\in\tau $ with $ \vr\le\vx\le\sv $ and $ \vr\sim_D\vs $ but neither $ \vr\sim_D\vx $ nor~$ \vr\sim_D\xv $, as well as $ \vd_1,\vd_2\in D $ with
\[ \vd_1\lne\rv,\sv,\vx,\qquad\vd_2\lne\rv,\sv,\xv. \]
Proposition~\ref{star-push} applied to the three-star $ \{\vx,\vd_2,\sv\} $ then yields a branching star $ \sigma $ and some $ \vb\in\sigma $ with~$ \vx\le\vb $. Then $ \vd_1\le\vb\le\dv_2 $ by $ \vd_1\le\vx $ and the star property and hence $ \vb\in D $ as $ D $ is branch-closed. But $ \vr\le\vb\lne\sv $ by the star property, so $ \vb $ distinguishes $ \vr $ and $ \vs $, contradicting~$ \vr\sim_D\vs $.

Therefore $ \tau/D $ cannot contain a trivial element.
\end{proof}

We have assembled all the parts necessary to show that $ \tau/D $ is a finite tree set:

\begin{PROP}\label{prop:fintreeset}
Let~$ \tau $ be a chain-complete tree set and $ D $ a branch-closed selection. Then $ \tau/D $ is a finite tree set.
\end{PROP}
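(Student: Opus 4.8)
The plan is to assemble the results established throughout this section, since all of the genuine work has already been carried out in the preceding lemmas. By the definition of $\tau/D$ the relation $\le$ is reflexive, so to see that it is a partial order I would only need to verify anti-symmetry and transitivity. Anti-symmetry holds for any selection by Lemma~\ref{anti-symmetric}, and transitivity is exactly the content of Lemma~\ref{dist_transitive} under the present hypotheses that $\tau$ is chain-complete and $D$ is branch-closed. Hence $\le$ is a partial order on $\tau/D$.

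Once $\le$ is known to be a partial order, the remark following the definition of $\tau/D$ (which invokes Lemma~\ref{dist_list}(i), so that the involution $\class{\vs}\mapsto\class{\sv}$ is well defined) tells us that $\tau/D$ is a separation system, and that it is nested because $\tau$ is. Finiteness is immediate as well: $\sim_D$ was already observed to have only finitely many equivalence classes, so $\tau/D$ has finitely many elements.

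It remains only to check the two defining properties of a tree set, namely the absence of degenerate and of trivial elements. There are no degenerate elements because, as noted just before Lemma~\ref{dist_list}, we have $\vs\not\sim_D\sv$ for every $\vs\in\tau$, so no class equals its own inverse. The absence of trivial elements is precisely Lemma~\ref{dist_nontriv}, again using that $\tau$ is chain-complete and $D$ branch-closed.

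Collecting these facts---nested, finite, free of degenerate elements, and free of trivial elements---yields that $\tau/D$ is a finite tree set. I do not expect any real obstacle at this stage: the two substantive difficulties, transitivity of $\le$ and non-triviality of the classes, have already been overcome in Lemmas~\ref{dist_transitive} and~\ref{dist_nontriv}, and what remains is purely a matter of bookkeeping.
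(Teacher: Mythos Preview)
Your proposal is correct and follows essentially the same route as the paper's own proof: assemble reflexivity, Lemma~\ref{anti-symmetric}, and Lemma~\ref{dist_transitive} to get a partial order, invoke Lemma~\ref{dist_list}(i) and the nestedness of~$\tau$ to obtain a nested separation system, observe finiteness from the finiteness of~$D$, and apply Lemma~\ref{dist_nontriv} for non-triviality. You even make explicit the absence of degenerate elements via $\vs\not\sim_D\sv$, a point the paper's proof passes over in silence.
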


\begin{proof}
As $ D $ is finite there are only finitely many subsets of $ D $ and hence only finitely many equivalence classes of~$ \sim_D $, so $ \tau/D $ is finite. The relation $ \le $ on $ \tau/D $ is reflexive by definition, anti-symmetric by Lemma~\ref{anti-symmetric} and transitive by Lemma~\ref{dist_transitive} and thus a partial order. The involution $ \braces{\class{\vs}}^*=\class{\sv} $ is order-reversing: if $ \class{\vs}\le\class{\vr} $ with $ \vs\le\vr $ then $ \sv\ge\rv $ and thus~$ \class{\sv}\ge\class{\rv} $. Therefore $ \tau/D $ is a separation system. Any two unoriented separations $ \{\class{\vs},\class{\sv}\},\{\class{\vr},\class{\rv}\} $ in $ \tau/D $ have comparable orientations, because their representatives $ s $ and $ r $ are nested. Finally Lemma~\ref{dist_nontriv} shows that $ \tau/D $ has no trivial elements and is thus a finite tree set.
\end{proof}

With this we have accomplished the main goal of this section. In the next two sections we will define a suitable directed set $ \D $ of selections of $ \tau $ and show~$ \tau\cong{\invlim(\tau/D\mid D\in\D)} $. To help with this in the remainder of this section we establish a few independent facts about the behaviour of $ \tau/D $ for later use. We show that the relation of $ \vr $ and $ \vs $ in~$ \tau $ can sometimes be recovered from the relation of $ \class{\vr} $ and $ \class{\vs} $ in $ \tau/D $, and that the equivalence classes of~$ \sim_D $ in~$ \tau $ are chain-complete. The latter will be crucial in the surjectivity proof in the next sections.\\ 
\\
If $ \tau/D $ is a tree set this implies that $ r $ and $ s $ in $ \tau $ have to have the same relation as $ \class{\vr} $ and $ \class{\vs} $ in $ \tau/D $, at least if those are different classes:

\begin{LEM}\label{lem:orderpres}
Let~$ \tau $ be a tree set, $ \vr,\vs\in\tau $ and $ D $ a selection for which $ \tau/D $ is a tree set. If $ \class{\vr}\lne\class{\vs} $ then $ \vr\lne\vs $.
\end{LEM}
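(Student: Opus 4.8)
The plan is to transfer the inequality $\class{\vr}\le\class{\vs}$ back down to $\tau$ by combining the nestedness of $\tau$ with the hypothesis that $\tau/D$ is a tree set, ruling out every configuration of $\vr$ and $\vs$ in $\tau$ except $\vr\le\vs$. First I would unpack the hypothesis $\class{\vr}\lne\class{\vs}$: by definition it gives $\class{\vr}\le\class{\vs}$ together with $\menge{\class{\vr},\class{\rv}}\ne\menge{\class{\vs},\class{\sv}}$. The latter immediately yields $r\ne s$ in $\tau$, since $r=s$ would force $\menge{\vr,\rv}=\menge{\vs,\sv}$ and hence equal classes. Thus it only remains to prove $\vr\le\vs$, and then $\vr\lne\vs$ follows.

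Since $\tau$ is nested, $r$ and $s$ have comparable orientations, so one of $\vr\le\vs$, $\vr\le\sv$, $\rv\le\vs$, $\rv\le\sv$ holds; I would dispose of the three unwanted cases one by one. The first alternative is exactly the desired conclusion. If $\rv\le\sv$, i.e.\ $\vs\le\vr$, then $\class{\vs}\le\class{\vr}$, and antisymmetry of the partial order of the separation system $\tau/D$ forces $\class{\vr}=\class{\vs}$, contradicting $\class{\vr}\lne\class{\vs}$. If $\vr\le\sv$, then passing to classes gives $\class{\vr}\le\class{\sv}$; together with $\class{\vr}\le\class{\vs}$ and $\menge{\class{\vr},\class{\rv}}\ne\menge{\class{\vs},\class{\sv}}$ this makes $\class{\vr}$ trivial in $\tau/D$, witnessed by the unoriented separation underlying $\class{\vs}$, contradicting that $\tau/D$ contains no trivial elements. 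The remaining case $\rv\le\vs$ is the mirror image: applying the order-reversing involution of $\tau/D$ to both $\rv\le\vs$ and to the hypothesis $\class{\vr}\le\class{\vs}$ yields $\class{\sv}\le\class{\vr}$ and $\class{\sv}\le\class{\rv}$, so now $\class{\sv}$ is trivial in $\tau/D$, again a contradiction. Hence $\vr\le\vs$, which with $r\ne s$ gives $\vr\lne\vs$.

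The argument is a four-way case distinction whose favourable and fully-reversed cases are trivialities; the only real care is needed in the last case, where one must route through the involution of $\tau/D$ to expose the trivial element, while keeping straight that triviality of a class requires both witnessing inequalities to be strict in the $\lne$ sense (distinctness as unoriented separations), which is precisely what $\menge{\class{\vr},\class{\rv}}\ne\menge{\class{\vs},\class{\sv}}$ supplies. No finer property of $\sim_D$ is invoked beyond the well-definedness of the involution on $\tau/D$ recorded after Lemma~\ref{dist_list}; the whole proof rests on $\tau/D$ being a tree set and on $\tau$ being nested.
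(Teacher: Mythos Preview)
Your proof is correct and follows the same approach as the paper's own argument. The paper condenses the whole case analysis into a single sentence---``Any other relation between $r$ and $s$ implies either $\class{\vr}=\class{\vs}$ or that one of $\class{\vr}$ and $\class{\sv}$ would be trivial in~$\tau/D$''---and your write-up simply makes the four cases explicit and checks each one carefully, including the point that $\class{\vr}\lne\class{\vs}$ forces the underlying unoriented classes to differ so that the triviality witness is legitimate.
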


\begin{proof}
Any other relation between $ r $ and $ s $ implies either $ \class{\vr}=\class{\vs} $ or that one of $ \class{\vr} $ and $ \class{\sv} $ would be trivial in~$ \tau/D $.
\end{proof}

For the study of $ \tau/D $ it is essential to know the behaviour of chains of~$ \tau $ with regard to~$ \sim_D $. It turns out that the equivalence classes of~$ \tau $ are chain-complete themselves if~$ \tau $ is; we don't even need the assumption that $ D $ is branch-closed for this:

\begin{PROP}\label{prop:classeschainclosed}
Let~$ \tau $ be a chain-complete tree set, $ D $ a selection, and $ \vt\in\tau $. Then $ \class{\vt} $ is chain-complete.

In particular $ \class{\vt} $ has a maximal (and a minimal) element.
\end{PROP}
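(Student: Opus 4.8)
The plan is to show that every non-empty sub-chain $C\sube\class{\vt}$ has a supremum lying again in $\class{\vt}$; the final sentence then follows, since chain-completeness means every chain of $\class{\vt}$ has an upper bound in $\class{\vt}$, so Zorn's Lemma gives a maximal element, and applying this to the inverse class $\class{\tv}=(\class{\vt})^*$ and inverting gives a minimal one. So fix a non-empty sub-chain $C\sube\class{\vt}$ and let $\vm:=\sup C$, which exists in $\tau$ because $\tau$ is chain-complete. All $\vc\in C$ lie in one class, so $\dplus{\vc}$ and $\dminus{\vc}$ are the same set for every $\vc\in C$; write $P:=\dplus{\vc}$ and $N:=\dminus{\vc}$. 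Everything reduces to proving $\dplus{\vm}=P$ and $\dminus{\vm}=N$, i.e. $\vm\sim_D\vc$.

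From $\vc\le\vm$ and the monotonicity of $\dplus{\cdot}$ and $\dminus{\cdot}$ we get $P\sube\dplus{\vm}$ and $\dminus{\vm}\sube N$ for free. For the inclusion $\dplus{\vm}\sube P$ I would take $\vd\in D$ with $\vd\lneqq\vm=\sup C$ and, using that $\tau$ is nested, go through the ways $\vd$ can sit relative to the chain $C$. The positions $\vd\le\cv$ and $\dv\le\vc$ are excluded because they would make $\vd$ small or $\mv$ trivial, which is impossible in a tree set; and if $\vd$ were an upper bound of $C$ then $\vm\le\dv$ together with $\vd\le\vm$ would give $\vd=\vm$, contradicting $\vd\lneqq\vm$. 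The only surviving possibility is $\vd\lneqq\vc$ for some $\vc\in C$, i.e. $\vd\in P$; here finiteness of $D$ and $\vm=\sup C$ are what let me choose $\vc$ large enough to absorb the finitely many $\vd$ lying strictly below $\vm$.

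The hard part is the remaining inclusion $N\sube\dminus{\vm}$. The same case analysis shows that it can fail in exactly one way: an element $\vd\in D$ with $\vd=\mv$, occurring when $C$ has no greatest element so that $\vc\lneqq\vm$ for all $\vc\in C$. Such a $\vd$ lies in $N$ (indeed $\mv\lneqq\cv$ for every $\vc$) but not in $\dminus{\vm}$, and—unlike the cases above—it sits exactly at the supremum and so cannot be removed by enlarging $\vc$. I expect this boundary configuration to be the genuine obstacle, the one that also defeats the naive compactness argument, and it is the single point where the hypothesis that $D$ is a selection, rather than an arbitrary finite set, has to be used. To rule it out I would show that $\mv\in D$ is incompatible with $\vm=\sup C$ and $\vc\lneqq\vm$ for all $\vc$: the limit $\vm$ should be promoted to a genuine splitting star by orienting $\tau$ consistently with $\vm$ maximal (via Lemma~\ref{lem:extension} and Lemma~\ref{lem:twoclosedgeneral}, or Proposition~\ref{star-push} applied to a three-star placed at $\vm$), after which the selection condition $\abs{\sigma\cap D}\ne1$ forces a companion element of $D$ at $\vm$ that distinguishes $\vm$ from the $\vc$ and, traced back along $C$, contradicts $\vm=\sup C$. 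Making this promotion precise—extracting a splitting star that the selection must meet twice out of a purely order-theoretic supremum—is the technical heart I would concentrate on.
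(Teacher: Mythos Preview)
Your overall structure matches the paper's, and you correctly isolate the single obstruction: an element $\vd\in N$ with $\vd=\mv$. The gap is in your proposed resolution of this case. You plan to \emph{construct} a splitting star containing $\mv$ via Lemma~\ref{lem:twoclosedgeneral} or Proposition~\ref{star-push}, but neither tool is available here: Lemma~\ref{lem:twoclosedgeneral} needs a second maximal element besides~$\vm$, and Proposition~\ref{star-push} needs a three-star, and a bare chain supremum supplies neither. In fact no such splitting star exists, so the construction you set out to perform is impossible; this is precisely why your sketch stalls at ``making this promotion precise''.

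The paper's resolution is short and runs in the opposite direction. Before touching $D$ at all, one observes that if $\vm\notin C$ then $\mv$ cannot lie in \emph{any} splitting star~$\sigma$: for if $\mv\in\sigma$, then any other $\vsdash\in\sigma$ satisfies $\vsdash\lne\vm$ and is an upper bound of~$C$, contradicting $\vm=\sup C$. The selection hypothesis is then invoked not to find a companion element in $\sigma$, but simply through the fact (used throughout Section~\ref{sec:distinguishing}) that every $\vd\in D$ lies in a splitting star; hence $\vd\ne\mv$, and your hard case does not arise. So the splitting star you wanted to build comes for free with membership in~$D$, and its mere existence already contradicts $\vm=\sup C$. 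The ``technical heart'' you anticipated collapses to a one-line observation once you argue by contraposition rather than construction.
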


\begin{proof}
Let $ C $ be a chain in the equivalence class $ \class{\vt} $ with supremum $ \vs $ in~$ \tau $. The claim is trivial if $ \vs\in C $. Thus we may assume that $ \vs\notin C $. Then $ \sv $ cannot lie in a splitting star $ \sigma $ of~$ \tau $, as in that case some other element $ \vsdash $ of $ \sigma $ would be an upper bound of $ C $ with $ \vsdash\lne\vs $.

Pick some $ \vr\in C $; we will verify that $ \vr\sim_D\vs $. As $ \vr\le\vs $ we have $ \dplus{\vr}\sube\dplus{\vs} $ and $ \dminus{\vs}\sube\dminus{\vr} $.

Consider $ \vd\in\dminus{\vr} $. As all elements of $ C $ are $ D $-equivalent $ \dv $ is an upper bound for $ C $ and hence $ \vs\le\dv $. On the one hand $ \vd\ne\sv $ as $ \sv $ does not lie in a splitting star, on the other hand $ \vd\ne\vs $ as then $ \vr $ would be trivial. Therefore $ \vs\lne\dv $ and thus $ \dminus{\vr}\sube\dminus{\vs} $.

Now consider $ \vd\in\dplus{\vs} $. This $ \vd $ cannot be an upper bound for $ C $, hence either $ \vd\in\dplus{\vr} $ or $ \vd\le\rv $. In the latter case $ \dv $ is an upper bound for $ C $ implying $ \vd\lne\vs\le\dv $ and thus that $ \vd $ would be trivial. Therefore $ \dplus{\vs}\sube\dplus{\vr} $ and hence $ \vr\sim_D\vs $.
\end{proof}

For a subset $ B\sub\tau $ and a selection $ D $ write $ \class{B}:=\{\class{\vb}\mid \vb\in B\}\sub\tau/D $. A direct consequence of Lemma~\ref{dist_list}(ii) and Proposition~\ref{prop:classeschainclosed} is that for a chain $ C\sube\tau $ the supremum of $ \class{C} $ is the class of the supremum of $ C $ in~$ \tau $:

\begin{COR}\label{cor:chainsup}
Let~$ \tau $ be a chain-complete tree set, $ D $ a branch-closed selection,~$ C $ a chain and $ \vs $ the supremum of $ C $ in~$ \tau $. Then $ \class{\vs}=\max\,\,\class{C} $ in~$ \tau/D $.
\end{COR}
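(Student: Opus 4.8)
The plan is to prove that $\class{\vs}=\max\class{C}$ by separately establishing that $\class{\vs}$ is an upper bound for $\class{C}$ and that $\class{\vs}$ itself lies in $\class{C}$. The first part is immediate: for every $\vc\in C$ we have $\vc\le\vs$, hence $\class{\vc}\le\class{\vs}$ by the definition of $\le$ on $\tau/D$, so $\class{\vs}$ is an upper bound for the set $\class{C}=\{\class{\vc}\mid\vc\in C\}$. The crux is therefore to show that $\class{\vs}\in\class{C}$, i.e.\ that $\vs$ is $D$-equivalent to some element of $C$, for then $\class{\vs}$ is both an upper bound and a member of $\class{C}$ and hence its maximum.

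The key observation is that Proposition~\ref{prop:classeschainclosed} already does most of the work. First I would distinguish two cases according to whether $C$ is eventually constant up to $\sim_D$ or not. If some element $\vc\in C$ satisfies $\vc\sim_D\vs$, we are immediately done. So suppose not. The essential point is that the elements of $C$ need not all lie in a single class, but the supremum $\vs$ is $D$-equivalent to all sufficiently large elements of $C$: since $\dplus{\vc}\subseteq\dplus{\vs}$ and $D$ is finite, the increasing finite sets $\dplus{\vc}$ must stabilise along the chain, and dually the decreasing sets $\dminus{\vc}$ stabilise as well. Concretely, pick $\vc_0\in C$ large enough that $\dplus{\vc_0}$ is maximal among $\{\dplus{\vc}\mid\vc\in C\}$ and $\dminus{\vc_0}$ is minimal; I would then argue that the tail $C':=\{\vc\in C\mid\vc\ge\vc_0\}$ is a chain with the same supremum $\vs$, all of whose elements are mutually $D$-equivalent by Lemma~\ref{dist_list}(ii), and that this common class is exactly $\class{\vs}$.

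To pin down that last equality I would invoke Proposition~\ref{prop:classeschainclosed} applied to the class $\class{\vc_0}$: since $\class{\vc_0}$ is itself chain-complete, the chain $C'\subseteq\class{\vc_0}$ has a supremum \emph{inside} $\class{\vc_0}$, and because suprema in $\tau$ are unique this supremum must coincide with $\vs$. Hence $\vs\in\class{\vc_0}$, giving $\class{\vs}=\class{\vc_0}\in\class{C}$ as required. This is the cleanest route: rather than re-deriving $\vr\sim_D\vs$ by hand as in the proof of Proposition~\ref{prop:classeschainclosed}, I would cite that proposition directly, using that the equivalence classes are themselves chain-complete and that the supremum of a chain contained in a single class stays in that class.

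The main obstacle I anticipate is the bookkeeping needed to show that $\dplus{\vc}$ and $\dminus{\vc}$ stabilise along $C$ and that the stabilising class is reached by a single $\vc_0$ rather than only in a limit; one must be careful that $D$ being \emph{finite} is exactly what forces the monotone families of subsets of $D$ to be eventually constant, and that directedness of the chain lets a single $\vc_0$ dominate the finitely many jumps. Once this is in place, the identification of $\vs$'s class with $\class{\vc_0}$ via the chain-completeness of $\class{\vc_0}$ is routine. A subtlety to handle is the degenerate possibility $\vs\in C$, where the statement is trivial, so I would dispose of that at the outset.
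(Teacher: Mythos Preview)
Your proposal is correct and follows essentially the same route as the paper: both arguments find a final segment of $C$ lying in a single $\sim_D$-class and then invoke Proposition~\ref{prop:classeschainclosed} to place $\vs$ in that class, which is then the maximum of $\class{C}$. The only cosmetic difference is that the paper obtains the final segment by combining the finiteness of the set of classes with the convexity from Lemma~\ref{dist_list}(ii), whereas you track the monotone stabilisation of $\dplus{\vc}$ and $\dminus{\vc}$ directly; these are equivalent justifications of the same fact.
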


\begin{proof}
The relation $ \sim_D $ has finitely many equivalence classes, so Lemma~\ref{dist_list}(ii) implies that some final segment of $ C $ is completely contained in some class $ \class{\vt} $ of~$ \tau $. The set $ \class{C} $ in $ \tau/D $ is again a chain, and as $ \class{\vt} $ contains a final segment of $ C $ it is the maximum of $ \class{C} $ in~$ \tau/D $. Proposition~\ref{prop:classeschainclosed} now implies~$ \vs\in\class{\vt}=\max\class{C} $.
\end{proof}

Infinite splitting stars of $ \tau $ play an important role in the upcoming Section~\ref{sec:proof}. We now analyze their behaviour with regard to~$ \sim_D $. This turns out to be quite simple: if a splitting star $ \sigma $ meets $ D $, then all elements of $ \sigma\cap D $ are pairwise non-equivalent, and all elements of $ \sigma\setminus D $ get identified:

\begin{LEM}\label{lem:infiniteclass}
Let~$ \tau $ be a chain-complete tree set, $ D $ a branch-closed selection and $ \sigma $ a splitting star that meets~$ D $. Then $ \vr\sim_D\vs $ for distinct $ \vr,\vs\in\sigma $ if and only if~$ \vr,\vs\notin D  $.

In particular if $ \sigma $ is infinite there is exactly one equivalence class of~$ \sim_D $ containing infinitely many elements of $ \sigma $, and every other equivalence class contains at most one element of~$ \sigma $.
\end{LEM}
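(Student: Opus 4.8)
The plan is to prove the two implications of the biconditional separately, and then read off the ``in particular'' clause by a counting argument.

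For the \emph{only-if} direction I would argue contrapositively: assume, say, $\vr\in D$, and show $\vr\not\sim_D\vs$. Since splitting subsets are proper stars (Lemma~\ref{Remark8}), distinct elements of $\sigma$ are distinct as unoriented separations and obey the star inequality, so $\vr\lne\sv$. Hence $\vr\in\dminus{\vs}$, while clearly $\vr\notin\dminus{\vr}$; thus $\vr$ itself distinguishes $\vr$ and $\vs$, giving $\vr\not\sim_D\vs$.

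For the \emph{if} direction, suppose $\vr,\vs\in\sigma$ are distinct with $\vr,\vs\notin D$. The crucial preliminary observation is that $\sigma$ meets $D$ in at least two separations (as $D$ is a selection, $|\sigma\cap D|\ne1$), so the existence of an element of $\sigma\setminus D$ forces $|\sigma|\ge3$; hence $\sigma$ is a branching star and each of its elements is a branching point. I would then verify the two equalities defining $\sim_D$. First, $\dplus{\vr}=\emptyset$: if some $\vd\in D$ satisfied $\vd\lne\vr$, then picking any $\vd_0\in\sigma\cap D$ (necessarily $\ne\vr$) would give $\vd\le\vr\le\dv_0$ by the star property, so branch-closedness would force the branching point $\vr$ into $D$, a contradiction; the same applies to $\vs$, so $\dplus{\vr}=\dplus{\vs}=\emptyset$. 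Second, $\dminus{\vr}=\dminus{\vs}$: by symmetry it suffices to show $\vd\lne\rv\Rightarrow\vd\lne\sv$ for $\vd\in D$. Nestedness leaves four comparabilities of $d$ with $s$; the cases $\vd\le\vs$ and $\sv\le\vd$ are ruled out at once (the former would place $\vd$ in $\dplus{\vs}=\emptyset$, the latter would combine with $\vr\le\sv$ to make $\vr$ trivial), leaving $\vd\le\sv$ (as desired) and the problematic case $\vs\le\vd$.

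The main obstacle is excluding this last case, and here I would invoke the splitting orientation $O$ of $\tau$ whose maximal elements are exactly $\sigma$. Since $\vs$ is maximal in $O$ and $\vs\lne\vd$, we cannot have $\vd\in O$, so $\dv\in O$ and therefore $\dv\le\vt$ for some $\vt\in\sigma$. Combined with $\vr\le\dv$ (from $\vd\le\rv$) this yields $\vr\le\vt$ with $\vr,\vt\in\sigma$, which forces $\vt=\vr$ and then $\vd=\rv$, contradicting $\vd\lne\rv$. This establishes $\vr\sim_D\vs$. Finally, for the ``in particular'' clause: when $\sigma$ is infinite, $\sigma\cap D$ is finite, so $\sigma\setminus D$ is infinite and, by the biconditional, entirely contained in a single class; every other class meets $\sigma$ only in separations of $\sigma\cap D$, which are pairwise non-equivalent, hence in at most one point.
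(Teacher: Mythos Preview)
Your proof is correct and follows essentially the same route as the paper. The forward direction is identical. For the backward direction, both you and the paper first use branch-closedness (together with $\sigma\cap D\ne\emptyset$) to obtain $\dplus{\vr}=\dplus{\vs}=\emptyset$, and then use the splitting-star structure of~$\sigma$ to conclude $\dminus{\vr}=\dminus{\vs}$. The only difference is packaging: the paper compresses the second step into the identity $\dminus{\vr}\setminus\{\vs\}=\dminus{\vs}\cup\dplus{\vs}$ (valid because $\vr,\vs$ lie in a splitting star), whereas you unfold this into an explicit four-case nestedness analysis and handle the case $\vs\le\vd$ via the splitting orientation~$O$. Your observation that $|\sigma|\ge 3$ (so that $\vr,\vs$ are genuinely branching points) is needed for the branch-closedness argument and is implicit in the paper's phrasing. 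Incidentally, your treatment of the case $\vs\lne\vd$ can be shortened: once you note $\vd\notin O$ by maximality of~$\vs$, consistency of~$O$ together with $\vs\in O$ and $\vs\lne\vd$ also forces $\vd\in O$, an immediate contradiction; your longer chain through $\dv\le\vt\in\sigma$ is valid but unnecessary.
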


\begin{proof}
Let $ \vr,\vs\in\sigma $ be two distinct separations. For the forward direction suppose that $ \vr\in D  $. Then $ \vr\in\dminus{\vs} $ but $ \vr\notin\dminus{\vr} $, so $ \vr\not\sim_D\vs $.

For the backward direction assume that $ \vr,\vs\notin D  $. Then $ \dplus{\vs}=\emptyset $ as otherwise $ \vs\in D  $ by the assumptions that $ D $ is branch-closed and $ \sigma $ meets~$ D $. Similarly $ \dplus{\vr}=\emptyset $. Moreover $ \dminus{\vr}\setminus\{\vs\}=\dminus{\vs}\cup\dplus{\vs} $ as $ \vr,\vs $ lie in a splitting star, so $ \dminus{\vr}=\dminus{\vs} $ by $ \vs\notin D  $.
\end{proof}

To apply Lemma~\ref{lem:infiniteclass} in practice it is useful to have a sufficient condition for $ \sigma $ to meet~$ D $. The following lemma accomplishes this by showing that a splitting star~$ \sigma $ of~$ \tau $ must meet $ D $ as soon as it meets at least three equivalence classes of~$ \sim_D $:

\begin{LEM}\label{lem:threeclasses}
Let~$ \tau $ be a chain-complete tree set, $ D $ a branch-closed selection, and $ \sigma $ a splitting star which meets at least three equivalence classes of~$ \sim_D $. Then $ \sigma $ meets~$ D $.
\end{LEM}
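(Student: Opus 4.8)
The plan is to reduce, via branch-closedness, to producing a single $\vb\in\sigma$ with $\dplus{\vb}\ne\emptyset\ne\dminus{\vb}$, and then to argue that failing to find such a $\vb$ confines $\sigma$ to at most two classes. First, since $\sigma$ meets at least three classes it has at least three elements, so $\sigma$ is a branching star and each of its elements is a branching point of~$\tau$. The reduction itself is immediate: given $\vb\in\sigma$ with $\dplus{\vb}\ne\emptyset$ and $\dminus{\vb}\ne\emptyset$, pick $\vd_1\in\dplus{\vb}$ and $\vd_2\in\dminus{\vb}=\dplus{\bv}$; then $\vd_1\lne\vb$ and $\vb\lne\dv_2$, so $\vd_1\le\vb\le\dv_2$, and since $\vb$ is a branching point and $D$ is branch-closed we get $\vb\in D$, i.e.\ $\sigma$ meets~$D$.

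So suppose for contradiction that no such $\vb$ exists. Then for every $\vr\in\sigma$ exactly one of $\dplus{\vr},\dminus{\vr}$ is nonempty---not both by assumption, and not neither since no separation is $\sim_D$-equivalent to its inverse. The sets $\dplus{\vr}$ $(\vr\in\sigma)$ are pairwise disjoint: for distinct $\vr,\vrdash\in\sigma$ the star property gives $\vrdash\le\rv$, hence $\dplus{\vrdash}\subseteq\dplus{\rv}=\dminus{\vr}$, which is disjoint from $\dplus{\vr}$. Consequently at most one element of $\sigma$ can have nonempty $\dplus{\,\cdot\,}$: if two distinct $\vr,\vrdash$ did, then $\dminus{\vr}\supseteq\dplus{\vrdash}\ne\emptyset$ would make both $\dplus{\vr}$ and $\dminus{\vr}$ nonempty, the case just excluded. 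Write $\vro$ for this unique element when it exists.

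It now suffices to show that all $\vr\in\sigma$ with $\dplus{\vr}=\emptyset$ fall into a single class, for together with $\vro$ this caps the number of classes met by $\sigma$ at two. I claim $\dminus{\vr}=\dplus{\vro}$ for each such $\vr$. The inclusion $\dplus{\vro}\subseteq\dminus{\vr}$ is again the star property. The reverse inclusion is the heart of the matter, and the only step using that $\sigma$ is \emph{splitting} rather than merely a star: fix a consistent orientation $O$ of $\tau$ with maximal elements $\sigma$ and $O\subseteq\dcl(\sigma)$, and take $\vd\in\dminus{\vr}$, so $\vr\lne\dv$. Were $\dv\in O$, this would contradict the maximality of $\vr\in O$; hence $\vd\in O$, so $\vd\le\vrdash$ for some $\vrdash\in\sigma$. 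Here $\vrdash\ne\vr$ (else $\vd\in\dplus{\vr}=\emptyset$) and $\vd\lne\vrdash$ (as $\vrdash\notin D$), so $\vd\in\dplus{\vrdash}$; since $\vro$ is the only element of $\sigma$ with nonempty $\dplus{\,\cdot\,}$ this forces $\vrdash=\vro$ and $\vd\in\dplus{\vro}$. In particular $\vro$ must exist, since otherwise $\dminus{\vr}=\emptyset=\dplus{\vr}$ would give $\vr\sim_D\rv$. Thus every $\vr$ with $\dplus{\vr}=\emptyset$ has the same pair $(\dplus{\vr},\dminus{\vr})=(\emptyset,\dplus{\vro})$ and so lies in one class, yielding the desired contradiction.

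The main obstacle is precisely the reverse inclusion above: locating each $\vd\in\dminus{\vr}$ below some star element genuinely needs the down-closure property $O\subseteq\dcl(\sigma)$ of a splitting orientation, so the splitting hypothesis is indispensable here (for a general star of size three transitivity already fails, cf.\ Example~\ref{ex:non-trans}). The only remaining care concerns degenerate configurations in which $\vd$ coincides with $\rvdash$ for a small $\vrdash\in\sigma$; these are routine to exclude and I would treat them separately.
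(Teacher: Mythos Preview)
Your argument is correct in substance and follows the same strategy as the paper: isolate a single distinguished element of~$\sigma$ (your~$\vro$, the paper's~$\vt$) with $\dplus{\cdot}\ne\emptyset$, and show that all remaining elements collapse into one $\sim_D$-class. There is one organizational slip worth fixing. Your Case~2 hypothesis---that no $\vb\in\sigma$ has both $\dplus{\vb}$ and $\dminus{\vb}$ nonempty---does not by itself give $\vrdash\notin D$, so the parenthetical ``(as $\vrdash\notin D$)'' is unjustified as written; if $\vrdash\in D$ you would simply be done, but that breaks your contradiction framing. The clean remedy, which the paper adopts, is to suppose $\sigma\cap D=\emptyset$ from the outset: your Case~1 reasoning then shows that the Case~2 hypothesis follows automatically, and $\vrdash\notin D$ is immediate for every $\vrdash\in\sigma$.

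On the technical side the two proofs diverge slightly. Where the paper invokes the ready-made identity $\dminus{\vr}\setminus\{\vs\}=\dminus{\vs}\cup\dplus{\vs}$ for elements $\vr,\vs$ of a splitting star (the same identity used in the proof of Lemma~\ref{lem:infiniteclass}), you instead locate each $\vd\in\dminus{\vr}$ beneath a star element by explicitly appealing to the splitting orientation~$O$ and its down-closure property. Your route makes the role of the splitting hypothesis more transparent, at the cost of having to handle the edge case $\vd=\rvdash$ (with $\rvdash$ small, hence $\vrdash$ co-small---your remark has the direction reversed) that you flag at the end; the paper's identity absorbs that case without comment.
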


\begin{proof}
Suppose that $ \sigma\cap D =\emptyset $. Then there is $ \vt\in\sigma $ with $ \dplus{\vt}\ne\emptyset $. Consider $ \vr,\vs\in\sigma $ with $ \vr\not\sim_D\vt $ and~$ \vs\not\sim_D\vt $. We will show that $ \vr\sim_D\vs $, contradicting the assumption that $ \sigma $ meets three equivalence classes. First note that $ \dplus{\vr}=\dplus{\vs}=\emptyset $ by the assumptions that $ D $ is branch-closed and~$ \vt\notin~D $.  Moreover $ \dminus{\vr}\setminus\{\vs\}=\dminus{\vs}\cup\dplus{\vs} $ as $ \vr,\vs $ lie in a splitting star, so $ \dminus{\vr}=\dminus{\vs} $ by $ \vs\notin D $. Hence $ \vr\sim_D\vs $.
\end{proof}

\subsection{Obtaining profinite tree sets}\label{sec:proof}

In this section we prove the second assertion of Theorem~\ref{thm:treesets}:

\ThmTreesets*

For this we find a set of properties such that we can obtain every tree set with these properties as an inverse limit of finite tree sets, making use of the `quotients' defined in the previous section. Following that we show that every profinite tree set has these properties. In doing this we also obtain a characterization of the profinite tree sets in purely combinatorial terms. 

The general strategy is as follows. For a tree set $ \tau $ we define a suitable directed set $ \D $ of selections such that $ \tau/D $ is a finite tree set for each $ D\in\D $. We will use Proposition~\ref{prop:fintreeset} for this, so $ \tau $ needs to be chain-complete and the selections in $ \D $ must be branch-closed. For $ \vs\in\tau $ and selections $ D\sube D' $ we then have $ [\vs]_{D'}\sube\class{\vs} $, so by taking these inclusions as the bonding maps $ {(\tau/D\mid D\in\D)} $ is an inverse system of finite tree sets. It then remains to prove that $ \tau $ and $ \tau':={\invlim(\tau/D\mid D\in\D)} $ are isomorphic; for this we define the map $ \ph\colon\tau\to\tau' $ as
\[ \ph(\vs)=(\class{\vs}\mid D\in\D). \]
By the observation above $ \ph(\vs) $ is indeed always a compatible choice (and thus an element of $ \tau' $), and $ \ph $ is a homomorphism of separation systems by Lemma~\ref{dist_list}(i) and the definition of~$ \le $ in~$ \tau/D $. We shall verify the assumptions of Lemma~\ref{lem:isononreg}: that $ \ph $ is a bijection and that pre-images of small separations are small. The latter will be done by simple case-checking.

The map $ \ph $ is injective if and only if for all distinct $ \vr,\vs\in\tau $ the set $ \D $ contains a selection $ D $ with~$ \class{\vr}\ne\class{\vs} $. An almost sufficient condition for this is that there is a splitting star of $ \tau $ between any two given separations. To make this formal we say that a tree set $ \tau $ is {\em splittable} if for every $ \vr,\vs\in\tau $ with $ \vr\lne\vs $ there is a splitting star $ \sigma $ of~$ \tau $ with $ \vrdash,\svdash\in\sigma $ such that~$ \vr\le\vrdash\lne\vsdash\le\vs $. Then $ \ph $ is injective if $ \tau $ is splittable \textit{and} the set $ \bigcup\D $ contains all elements of $ \tau $ that lie in non-singleton splitting stars\footnote{In fact we will use a slightly smaller but still sufficient set $ \D $ of selections: $ \bigcup\D $ will miss only a single separation of every infinite splitting star, which we will deal with separately.}.

For the surjectivity of $ \ph $ we will rely on Corollary~\ref{cor:chainsup}. This is the most technical part of the proof. Essentially, if $ \tau' $ contains some separation $ \vtstar $ which is not in $ \ph(\tau) $, we will try to `sandwich' $ \vtstar $ by taking maximal chains $ C $ and $ C' $ in $ \tau $ whose images under $ \ph $ lie below and above $ \vtstar $ respectively. Then $ \vtstar $ will lie between the images under $ \ph $ of the supremum $ \vs $ of $ C $ and the infimum $ \vsdash $ of $ C' $. However, no element of $ \tau $ will lie inbetween $ \vs $ and $ \vsdash $, enabling us to show that for sufficiently large selections $ D\in\D $ there can be no compatible choice of elements of $ \tau/D $ inbetween $ \class{\vs} $ and $ \class{\vsdash} $ apart from $ \ph(\vs) $ and $ \ph(\vsdash) $ themselves. This approach works as long as $ \vtstar $ is not maximal or minimal in $ \tau' $; in those cases we need to be more careful, and we will adress this below.

What remains is the choice of the selections in $ \D $. Proposition~\ref{prop:fintreeset} demands that all $ D\in\D $ are branch-closed. For the injectivity of $ \ph $ we want $ \bigcup\D $ to contain all non-singleton splitting stars of $ \tau $. Lastly $ \D $ needs to be a directed set. However if $ \tau $ contains two non-singleton splitting stars with infinitely many branching points inbetween them then clearly we cannot achieve all three of these simultaneously. We therefore need to assume that $ \tau $ contains only finitely many branching points inbetween any two non-singleton splitting stars.

To make this formal, for unoriented separations $ s,s'\in\tau $ let $ C(s,s') $ denote the set of all branching points $ \vb $ of $ \tau $ for which there are orientations $ \vs,\vsdash $ with $ \vs\le\vd\le\vsdash $ or $ \vsdash\le\vd\le\vs $. Then $ C(s,s') $ is always the disjoint union of two chains, and if $ C(s,s') $ meets a splitting star $ \sigma $ in an element other than $ s $ or $ s' $ then it meets $ \sigma $ in exactly two elements. Thus for $ \vs\le\vsdash $, if there is a $ \vd\in C(s,s') $ with $ \vs\le\vd\lne\vsdash $, then $ C(s,s') $ contains a $ \dvdash $ with $ \vs\le\vd\lne\vddash\le\vsdash $. Furthermore a selection $ D $ of $ \tau $ is branch-closed if and only if $ C(s,s')\sube D $ for all~$ \vs,\vsdash\in D $.

If we assume that $ C(s,s') $ is finite for all regular $ s,s' $ in $ \tau $ then no two non-singleton splitting stars can have infinitely many branching points between them as each of these stars must contain a regular separation. Under this assumption the set of all branch-closed selections is a directed set.

However the set of all branch-closed selections is a bit too large to ensure the surjectivity of $ \ph $; consider for example an infinite splitting star $ \sigma $ of $ \tau $. As seen in Lemma~\ref{lem:infiniteclass} for every branch-closed selection $ D $ that meets $ \sigma $ the set $ \sigma\setminus D $ lies inside one equivalence class of $ \sim_D $. The family $ \vtstar:= (\class{\sigma\setminus D}\mid D\cap\sigma\ne\emptyset) $ is a compatible choice which unfortunately is not in the image of $ \ph $: for any $ \vs\in\sigma $ there is some selection $ D $ that contains $ \vs $ and hence shows that $ \ph(\vs)\ne\vtstar $. Therefore we need to `reserve' some $ \vs\in\sigma $ as the $ \vs\in\tau $ with $ \ph(\vs)=\vtdash $, which we can achieve by putting only those branch-closed selections $ D $ of $ \tau $ into $ \D $ that do not contain $ \vs $. Since $ \vtstar $ is small we need to pick a small separation $ \vs\in\sigma $ for this. The assumption on $ \tau $ needed for this to work is thus that every infinite splitting star contains a small separation. The fact that $ \bigcup\D $ now misses this selected $ \vs $ does not interfere with the injectivity of $ \ph $: since $ \vs $ is small and hence minimal in $ \tau $ it is not needed to distinguish any two separations of $ \tau $ and is also never required to be in a selection to make it branch-closed.

We will now show that under the assumptions outlined above we do indeed get an isomorphism between $ \tau $ and $ \tau' $.

\begin{PROP}\label{prop:nonregisom}
Let $ \tau $ be a chain-complete splittable tree set with no infinite regular splitting star, in which $ C(s,s') $ is finite for all regular~$ s,s' $ in~$ \tau $. For every infinite splitting star $ \sigma $ let $ \nu(\sigma) $ be a small separation in~$ \sigma $, and let $ \D $ be the set of all branch-closed selections $ D $ of $ \tau $ with $ \nu(\sigma)\notin D $ for every infinite splitting star $ \sigma $ of~$ \tau $.

Then $ {(\tau/D\mid D\in\D)} $ is an inverse system of finite tree sets and the map~$ \ph\colon\tau\to{\invlim(\tau/D\mid D\in\D)} $ with
\[ \ph(\vs)=(\class{\vs}\mid D\in\D) \]
is an isomorphism of tree sets.
\end{PROP}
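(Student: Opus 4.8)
The plan is to deduce that $\ph$ is an isomorphism from Lemma~\ref{lem:isononreg}, so that I never have to check directly that $\invlim(\tau/D\mid D\in\D)$ is a tree set or is regular. First I would confirm that $(\tau/D\mid D\in\D)$ genuinely is an inverse system of finite tree sets: each $\tau/D$ is a finite tree set by Proposition~\ref{prop:fintreeset} (as $\tau$ is chain-complete and every $D\in\D$ is branch-closed), the inclusions $[\vs]_{D'}\sube\class{\vs}$ for $D\sube D'$ supply the bonding homomorphisms, and $\D$ is directed because the branch-closure of the union of two finite selections is again finite — here the hypothesis that $C(s,s')$ is finite for regular $s,s'$ is what is used, since every branching point lies between regular separations of splitting stars — and this closure never forces in any reserved $\nu(\sigma)$, as $\nu(\sigma)$ is small and hence minimal. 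The map $\ph$ is a well-defined homomorphism by Lemma~\ref{dist_list}(i) and the definition of $\le$ on $\tau/D$, and $\invlim(\tau/D\mid D\in\D)$ is a tree set by Theorem~\ref{thm:treesets}(i). Thus it remains to show that $\ph$ is a bijection and reflects smallness.

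For injectivity, given distinct $r,s$ I would orient them (by nestedness) as $\vr\lne\vs$ and apply splittability to obtain a splitting star $\sigma$ with $\vrdash,\svdash\in\sigma$ and $\vr\le\vrdash\lne\vsdash\le\vs$. Then $\vrdash\in\dplus{\vs}\sm\dplus{\vr}$, so any selection $D\in\D$ containing $\vrdash$ distinguishes $r$ and $s$; such a $D$ exists because we may take the finite branch-closure of $\{\vrdash\}$, and the member of $\sigma$ we insert can always be chosen different from the single reserved separation $\nu(\sigma)$, which is never forced into $D$ anyway. Smallness-reflection is then immediate: if $\ph(\vr)$ is small then $\class{\vr}\le\class{\rv}$ for every $D$; picking $D$ with $\class{\vr}\ne\class{\rv}$ (which exists by injectivity since $\vr\ne\rv$) gives $\class{\vr}\lne\class{\rv}$, whence $\vr\lne\rv$ by Lemma~\ref{lem:orderpres}, i.e. $\vr$ is small.

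Surjectivity is the crux. Given $\vtstar\in\invlim(\tau/D\mid D\in\D)$ not already in the image, I would take a maximal chain $C$ among those chains of $\tau$ whose $\ph$-image lies below $\vtstar$ and a maximal chain $C'$ among those whose image lies above $\vtstar$, and set $\vs:=\sup C$ and $\vsdash:=\inf C'$, both existing by chain-completeness. Corollary~\ref{cor:chainsup} yields $\ph(\vs)\le\vtstar\le\ph(\vsdash)$. Maximality of $C$ and $C'$, combined with the fact that in the tree set $\invlim(\tau/D\mid D\in\D)$ every interval between two comparable separations is a chain (which uses the absence of trivial separations), forces that no separation of $\tau$ lies strictly between $\vs$ and $\vsdash$; by Lemma~\ref{lem:orderpres} this means no class lies strictly between $\class{\vs}$ and $\class{\vsdash}$ for any $D$, so each coordinate $\vtstar_D$ is $\class{\vs}$ or $\class{\vsdash}$, and a short compatibility-and-directedness argument pins $\vtstar$ down to be globally $\ph(\vs)$ or globally $\ph(\vsdash)$.

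The main obstacle is the extremal case, when $\vtstar$ is maximal (or, symmetrically, minimal) in $\invlim(\tau/D\mid D\in\D)$, for then the chain $C'$ above $\vtstar$ is empty and the sandwich degenerates. Here I would argue that the supremum $\vs$ of the maximal chain below $\vtstar$ either satisfies $\ph(\vs)=\vtstar$ outright, or else lies just below an infinite splitting star $\sigma$; in the latter case Lemma~\ref{lem:infiniteclass} shows that $\vtstar$ must be the compatible family $(\class{\sigma\sm D}\mid D\in\D)$, and since $\nu(\sigma)\in\sigma\sm D$ for every $D\in\D$ this family equals $\ph(\nu(\sigma))$. This is exactly where the hypotheses are needed: the absence of infinite regular splitting stars guarantees that a small separation $\nu(\sigma)$ exists to be reserved, and keeping it out of every $D\in\D$ is precisely what places this otherwise troublesome maximal element into the image of $\ph$. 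Once both cases are settled, $\ph$ is a bijective homomorphism reflecting smallness from a nested separation system into a tree set, and Lemma~\ref{lem:isononreg} completes the proof.
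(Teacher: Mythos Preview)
Your overall plan matches the paper's proof almost step for step: set up the inverse system via Proposition~\ref{prop:fintreeset}, verify $\D$ is directed using the finiteness of the $C(s,s')$, check that $\ph$ is a homomorphism, and then appeal to Lemma~\ref{lem:isononreg} after establishing injectivity, smallness-reflection, and surjectivity via the $X$/$Y$ sandwich together with the infinite-splitting-star analysis for the extremal case. Your surjectivity sketch for the non-extremal case routes through the observation that intervals in a tree set are chains and then a cofinality argument, whereas the paper picks a single large $D$ and produces a concrete $\vt$ violating maximality; both are fine.

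There is, however, a genuine gap in your smallness-reflection step. You write that from $\class{\vr}\le\class{\rv}$ and $\class{\vr}\ne\class{\rv}$ you obtain $\class{\vr}\lne\class{\rv}$, and then invoke Lemma~\ref{lem:orderpres} to conclude $\vr\lne\rv$. But in the paper's convention $\va\lne\vb$ means $\va\le\vb$ \emph{and} $a\ne b$ as unoriented separations. Since $(\class{\vr})^*=\class{\rv}$, the classes $\class{\vr}$ and $\class{\rv}$ always have the same underlying unoriented separation in $\tau/D$, so the relation $\class{\vr}\lne\class{\rv}$ is \emph{never} satisfied and Lemma~\ref{lem:orderpres} is vacuous here. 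Correspondingly the desired conclusion $\vr\lne\rv$ would entail $r\ne r$, which is absurd. What you actually need is that some $D\in\D$ makes $\class{\vr}$ non-small in $\tau/D$ whenever $\vr$ is non-small in $\tau$, and this does not drop out of Lemma~\ref{lem:orderpres}. The paper establishes it instead by a direct case analysis on $\vr$: using splittability to find a suitable two-element piece of a splitting star on either side of $\vr$ when $\vr$ is interior, and separate arguments when $\vr$ is minimal (hence in or near a splitting star) but not small. That paragraph of casework is the missing ingredient in your proposal.
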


\begin{proof}
By Proposition~\ref{prop:fintreeset} each $ \tau/D $ is a finite tree set. The set $ \D $ ordered by inclusion is a directed set: to see this, let $ D,D'\in\D $ and set
\[ E:=D \cup D'\cup\bigcup_{\vs,\vsdash\in D\cup D'}C(s,s'). \]
Then $ E\in\D $ with $ D,D'\sube E $: $ E $ is finite by the assumption that $ C(s,s') $ is finite for all regular~$ s,s' $, and $ E $ is branch-closed by construction. It contains no $ \nu(\sigma) $ of any infinite splitting star $ \sigma $, as neither $ D $ nor $ D' $ does and thus $ \nu(\sigma)\notin C(s,s') $ for all $ \vs,\vsdash\in D\cup D' $. Furthermore $ E $ meets no splitting star in exactly one element: since $ D $ and $ D' $ are selections, neither of them meets any splitting star in exactly one separation; and for $ C(s,s') $ with $ \vs,\vsdash\in D\cup D' $, if $ C(s,s') $ contains a element of some splitting star, and that star does not already meet $ D $ or $ D' $, then $ C(s,s') $ contains at least two elements of that star. Thus $ E $ is a selection with $ E\in\D $, showing that $ \D $ is indeed a directed set.

Therefore $ {(\tau/D\mid D\in\D)} $ is an inverse system with the surjective bonding maps $ f_{DD'}\colon\tau/D\to\tau/D' $ defined as
\[ f(\class{\vs})=[\vs]_{D'} \]
for~$ D'\subset D $. Note that these bonding maps are well-defined by the definition of~$ \sim_D $, and are homomorphisms of tree sets by Lemma~\ref{dist_list}(i). Thus $ \tau':={\invlim(\tau/D\mid D\in\D)} $ is a tree set by Theorem~\ref{thm:treesets}(i).

The map $ \ph $ is a homomorphism of tree sets by Lemma~\ref{dist_list}(i) and the definition of $ \le $ in~$ \tau/D $: if $ \vr\le\vs $ then $ \class{\vr}\le\class{\vs} $ for all $ D\in\D $ and hence~$ \ph(\vr)\le\ph(\vs) $. The claim thus follows from Lemma~\ref{lem:isononreg} if we can show that $ \ph $ is a bijection, and that pre-images of small separations are small.

For the injectivity let $ \vr,\vs\in\tau $ be two distinct separations. Then $ \ph(\vr)\ne\ph(\vs) $ follows from the assumption that $ \tau $ is splittable, unless one of $ \vr $ and $ \vs $ is $ \nu(\sigma) $ for some infinite splitting star $ \sigma $. Suppose $ \vr=\nu(\sigma) $. If $ s $ does not meet $ \sigma $ the injectivity again follows from $ \tau $ being splittable; if on the other hand $ s $ meets $ \sigma $ then any $ D\in\D $ that meets $ s $ witnesses $ \ph(\vr)\ne\ph(\vs) $ by Proposition~\ref{lem:infiniteclass}.

To show that pre-images of small separations are small we will show that the images of non-small separations are non-small. So consider a non-small $ \vs\in\tau $; we will find a $ D\in\D $ for which $ \class{\vs} $ is non-small, witnessing that $ \ph(\vs) $ is not small. If there are $ \vr,\vt\in\tau $ with $ \vr\lne\vs\lne\vt $, then by Lemma~\ref{lem:orderpres} we can obtain a suitable selection $ D\in\D $ by applying the splittability of~$ \tau $ to the pair $ \vr,\vs $ and $ \vs,\vt $, and then taking as $ D $ those two-stars and all branching points between them. We may therefore assume that there is no $ \vr\in\tau $ with $ \vr\lne\vs $ (the other case is symmetrical). If $ \vs $ lies in a splitting star $ \sigma $ of~$ \tau $, then $ \sigma $ is not a singleton star, and any non-singleton finite subset of $ \sigma $ containing $ \vs $ but not $ \nu(\sigma) $ is a selection $ D\in\D $ for which $ \class{\vs} $ is regular. So suppose that $ \vs $ does not lie in a splitting star. Then $ \vs $ cannot be co-small, and there is a separation $ \vt\in\tau $ with $ \vs\lne\vt $ and $ t $ regular. Consider $ C(s,t) $, which is finite by assumption since $ s $ and $ t $ are both regular. If $ C(s,t) $ is empty then by applying the splittability of~$ \tau $  to $ \vs,\vt $ we obtain a two-element subset of a splitting star which is a selection $ D\in\D $ with $ \class{\vs} $ regular. So suppose that $ C(s,t) $ is non-empty. From the assumption that there is no $ \vr\in\tau $ with $ \vr\lne\vs $ we know that $ \sv $ cannot be a branching point of $ \tau $ and hence $ \sv\notin C(s,t) $. Moreover $ \vs\notin C(s,t) $ since we assumed that $ \vs $ does not lie in any splitting star of $ \tau $. Thus there exists a $ \vd\in C(s,t) $ with $ \vs\lne\vd $; pick a minimal such $ \vd $. Let $ \sigma $ be the branching star containing $ \vd $, and let $ \dvdash $ be any element of $ \sigma $ other than $ \vd $ or $ \nu(\sigma) $ (if $ \sigma $ is infinite). Note that, if $ \sigma $ is infinite, then $ \vd $ cannot be $ \nu(\sigma) $ since then $ \vs\lne\vd $ would be small. Thus $ D:=\{\vd,\dvdash\} $ is a selection in $ \D $, and the minimality of $ \vd $ implies that $ \class{\vs} $ cannot be small.

For the surjectivity of $ \ph $ let $ \vtstar=(\vtstarD\mid D\in\D)\in\tau'={\invlim(\tau/D\mid D\in\D)} $ and assume for a contradiction that there is no $ \vs\in\tau $ with $ \ph(\vs)=\vtstar $. This implies that $ \vtstarD $ is an infinite equivalence class for each $ D\in\D $: for if $ \vtstarD $ were finite for some $ D\in\D $, then the injectivity of $ \ph $ and the fact that $ \D $ is a directed set would enable us to find a $ D'\subseteq D $ in $ \D $ for which $ \vtstar_{D'} $ is a singleton, in which case $ \vtstar $ would be the image of the single element of~$ \vtstar_{D'} $.

 Set
\[ X:=\menge{\vs\in\tau\mid \ph(\vs)\le\vtstar},\qquad Y:=\menge{\vs\in\tau\mid \ph(\vs)\ge\vtstar}. \]
For every $ s\in\tau $ exactly one of $ \vs,\sv $ lies in $ X\cup Y $. Therefore at most one of $ X $ and $ Y $ is empty. We distinguish two cases.

\textbf{Case 1:} Both $ X $ and $ Y $ are non-empty.

Let $ C $ be a maximal chain in $ X $ with supremum $ \vs $ in $ \tau $ and $ C' $ a maximal chain in $ Y $ with infimum $ \vsdash $. Then $ \ph(C) $ and $ \ph(C') $ are chains too. From Corollary~\ref{cor:chainsup} it follows that $ \ph(\vs) $ and $ \ph(\vsdash) $ are respectively the supremum of~$ \ph(C) $ and the infimum of~$ \ph(C') $ in $ \tau' $, hence $ \ph(\vs)\le\vtstar\le\ph(\vsdash) $. In fact we even have $ \ph(\vs)\lne\vtstar\lne\ph(\vsdash) $ by the assumption that $ \vtdash $ does not lie in the image of $ \ph $. Pick $ D\in\D $ large enough that $ \class{\vs}\le\vtstarD\le\class{\vsdash} $ with $ \vtstarD\ne\class{\vs} $ and $ \vtstarD\ne\class{\vsdash} $. As $ \tau/D $ is a tree set this implies $ \class{\vs}\lne\vtstarD\lne\class{\vsdash} $. Now consider some $ \vt\in\tau $ with $ \class{\vt}=\vtstarD $. By Lemma~\ref{lem:orderpres} we have $ \vs\lne\vt\lne\vsdash $, so $ \tv\in X\cup Y $ would imply that one of $ \vs $ and $ \vs' $ is co-trivial. Therefore $ \vt\in X\cup Y $, which contradicts the maximality of either $ C $ or $ C' $. This concludes Case~1.

\textbf{Case 2:} One of $ X $ and $ Y $ is empty.

We may assume that $ Y $ is empty (the case that $ X $ is empty is analogous).

Let $ C $ be a maximal chain in $ X $ with supremum $ \vs $. By Corollary~\ref{cor:chainsup} $ \ph(\vs) $ is the supremum of $ C $ in $ \tau' $ and hence $ \ph(\vs)\le\vtstar $. Moreover $ \ph(\sv)\ne\vtstar $ and therefore $ \ph(\vs)\lne\vtstar $. Let $ \D' $ be the set of all $ D\in\D $ with $ \class{\vs}\lne\vtstarD $. This is a cofinal set in $ \D $. For each $ D\in\D' $ let $ \vM(D) $ be the set of minimal elements of the equivalence class $ \vtstarD $, which is non-empty by Proposition~\ref{prop:classeschainclosed}. Consider a $ D\in\D' $ and~$ \vr\in\vM(D) $. Then $ \vs\lne\vr $, and by the maximality of $ C $ and the minimality of $ \vr $ there can be no $ \vt\in\tau $ with~$ \vs\lne\vt\lne\vr $. Lemma~\ref{lem:twoclosedgeneral} thus implies that $ \vs $ and $ \rv $ lie in a common splitting star $ \sigma $ of~$ \tau $. As $ \sigma $ is the unique splitting star of~$ \tau $ containing $ \vs $, and both $ D\in\D' $ and $ \vr\in\vM(D) $ were arbitrary, this shows $ \Mv(D):=\{\rv\mid \vr\in\vM(D)\}\sube\sigma $ for every~$ D\in\D' $. We will show that $ \vM(D) $ and thus $ \sigma $ is infinite for every $ D\in\D' $ and deduce~$ \tvstar=\ph(\nu(\sigma)) $.

Suppose for a contradiction that there is a $ D\in\D' $ with $ \vM(D) $ finite and let $ D'\in\D' $ with $ D'\supseteq D $ be such that $ [\vr]_{D'}\ne\vtstar_{D'} $ for each $ \vr\in\vM $. Pick a~$ \vu\in\vM(D') $. Then $ \uv\in\Mv(D')\sub\sigma $. By compatibility $ \class{\vu}=\vtstarD $ and hence $ \vr\le\vu $ with $ \vr\ne\vu $ for some $ \vr\in\vM(D) $, contradicting $ \rv,\uv\in\sigma $. Therefore $ \vM(D)\sub\sigma $ is infinite for every $ D\in\D' $.

Pick a $ \rv\in\sigma $ with $ \rv\ne\vs $ and fix some $ D\in\D' $ with $ \class{\rv}\ne\class{\vs} $ and $ \class{\rv}\ne\vtstarD $. Then Lemma~\ref{lem:infiniteclass} and Lemma~\ref{lem:threeclasses} imply that $ \tvstar_{D'}=[\nu(\sigma)]_{D'} $ for every $ D'\in\D' $ with $ D'\ge D $ as $ \nu(\sigma)\notin D' $ by the definition of $ \D $. As the set of all $ D'\in\D' $ with $ D'\ge D $ is cofinal in $ \D $ this shows~$ \tvstar=\ph(\nu(\sigma)) $. This concludes Case~2.

We have shown that $ \ph $ is a bijection. The claim now follows from Lemma~\ref{lem:isononreg}.
\end{proof}

In order to establish Theorem~\ref{thm:treesets} we now show that all profinite tree sets meet the assumptions of Proposition~\ref{prop:nonregisom}, i.e. that every profinite tree set is chain-complete, splittable, contains no infinite regular splitting star, and has finite $ C(s,s') $ for all regular separations $ s $ and $ s' $.

The following lemma from~\cite{ProfiniteASS} implies that every profinite separation system is chain-complete:

\begin{LEM}[\cite{ProfiniteASS}]\label{lem:chains}
Let $ \S=(\vSp\mid p\in P) $ be an inverse system of finite separation systems and $ \vS=\invlim\S $. If $ C\sub\vS $ is a non-empty chain, then $ C $ has a supremum and an infimum in~$ \vS $. Both these lie in the closure of $ C $ in~$ \vS $.\qed
\end{LEM}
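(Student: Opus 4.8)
The plan is to build the supremum coordinatewise. Recall that $\vS=\invlim\S$ sits inside the product $\prod_{p\in P}\vSp$ as the set of compatible families, that both the partial order and the involution on $\vS$ are inherited componentwise from the $\vSp$, and that the bonding maps $f_{pq}\colon\vSp\to\vSq$ (for $p\ge q$) are homomorphisms of separation systems and hence order-preserving. The topology on $\vS$ is the subspace topology induced from the product of the discrete finite spaces $\vSp$, so a neighbourhood basis at a point $\vm\in\vS$ is given by the single-coordinate cylinders $\menge{\vx\in\vS\mid \vx_p=\vm_p}$ for $p\in P$; any finitely many coordinates can be subsumed into one, since $P$ is directed and the $f_{pq}$ determine smaller coordinates from larger ones.

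First I would project the chain: for each $p\in P$ the projection $\vs\mapsto\vs_p$ is order-preserving, so $C_p:=\menge{\vs_p\mid \vs\in C}$ is a non-empty chain in the finite poset $\vSp$; let $\vm_p:=\max C_p$, which exists and is attained by some element of~$C$. The crucial step, and the one I expect to be the main obstacle, is to verify that $\vm:=(\vm_p\mid p\in P)$ is a compatible family, i.e. that $f_{pq}(\vm_p)=\vm_q$ whenever $p\ge q$. Here I would use that $f_{pq}$ is order-preserving, so that $f_{pq}(\vm_p)$ is an upper bound for $f_{pq}(C_p)=C_q$, together with the fact that $\vm_p$ is attained in $C$, which places $f_{pq}(\vm_p)$ itself inside $C_q$; an upper bound of a chain that belongs to that chain is its maximum, whence $f_{pq}(\vm_p)=\vm_q$. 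This shows $\vm\in\vS$.

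It then remains to identify $\vm$ as the supremum and to locate it in the closure, both of which are routine coordinate comparisons. That $\vm$ is an upper bound, and the least one, follows since $\vs_p\le\vm_p$ for every $\vs\in C$ and every $p$ gives $\vs\le\vm$, while any upper bound $\vu$ satisfies $\vm_p=\max C_p\le\vu_p$ for all $p$ and hence $\vm\le\vu$. For the closure statement, given a basic neighbourhood $\menge{\vx\in\vS\mid \vx_p=\vm_p}$ of $\vm$, the attainment of $\vm_p$ yields an $\vs\in C$ with $\vs_p=\vm_p$ lying in it, so $\vm\in\overline C$. Finally I would obtain the infimum by applying this whole construction to the chain $C^*:=\menge{\sv\mid \vs\in C}$: since the involution is an order-reversing homeomorphism of $\vS$ with $(C^*)^*=C$, the identity $\inf C=(\sup C^*)^*$ furnishes an infimum, and it lies in $(\overline{C^*})^*=\overline C$.
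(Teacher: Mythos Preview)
Your argument is correct. The paper does not actually prove this lemma: it is quoted verbatim from~\cite{ProfiniteASS} and closed with a \texttt{\textbackslash qed}, so there is no in-paper proof to compare against. That said, the coordinatewise-maximum construction you give is the standard argument for this fact and is almost certainly what~\cite{ProfiniteASS} does as well; the key step---that $f_{pq}(\max C_p)$ is both an upper bound for $C_q$ and an element of $C_q$, hence equal to $\max C_q$---is handled cleanly, and the passage to the infimum via the involution (an order-reversing homeomorphism) is the natural shortcut.
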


Let us now show that profinite tree sets are splittable, using the knowledge that they are chain-complete:

\begin{PROP}\label{prop:splittable}
	Profinite tree sets are splittable.
\end{PROP}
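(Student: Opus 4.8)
The plan is, given $ \vr\lne\vs $ in a profinite tree set $ \tau $, to produce a splitting star $ \sigma $ containing separations $ \vrdash,\svdash $ with $ \vr\le\vrdash\lne\vsdash\le\vs $. I would reduce this to finding a single consistent orientation of $ \tau $ with two suitable maximal elements, and then extract those maximal elements from one finite quotient of $ \tau $.

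\emph{Reduction.} Suppose I have a consistent orientation $ O $ of $ \tau $ with $ \vr,\sv\in O $ having two \emph{distinct} maximal elements, one above $ \vr $ and one above $ \sv $. Since $ \tau $ is chain-complete (Lemma~\ref{lem:chains}), a maximal element of $ O $ lies above any given element of $ O $: the up-closure in $ O $ of a separation is a chain by consistency, its supremum lies in $ \tau $, and this supremum must lie in $ O $ and be maximal, since otherwise its inverse would witness an inconsistency exactly as in the proof of Lemma~\ref{lem:twoclosedgeneral}. Writing $ \vrdash\ge\vr $ and $ \svdash\ge\sv $ for two such distinct maximal elements, Lemma~\ref{lem:twoclosedgeneral} shows that $ O $ is splitting, so the set $ \sigma $ of its maximal elements is a splitting star containing $ \vrdash $ and $ \svdash $. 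Distinct elements of a star have distinct underlying separations and satisfy $ \vrdash\le(\svdash)^*=\vsdash $, so $ \vrdash\lne\vsdash $; and $ \vsdash\le\vs $ follows from $ \sv\le\svdash $. Thus $ \sigma $ is exactly the splitting star demanded by splittability.

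\emph{Producing the orientation.} Here profiniteness is essential: chain-completeness alone does not suffice, as the chain-complete tree set $ \tau(X) $ of Example~\ref{ex:(non)chain-complete} for a compact real interval $ X $ has consistent orientations with only one maximal element and no splitting stars at all. Writing $ \tau=\invlim(\vSp\mid p\in P) $ with each $ \vSp $ a finite tree set, I would pick $ p $ with $ r_p\ne s_p $, so $ \vr_p\lne\vs_p $ in $ \vSp $. Finite tree sets are splittable (between two comparable separations of a finite tree set there is a splitting star), so there is a consistent orientation $ O_p $ of $ \vSp $ containing $ \vr_p $ and $ \sv_p $ in which the maximal element $ \va $ above $ \vr_p $ and the maximal element $ \vb $ above $ \sv_p $ are distinct. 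I then lift $ O_p $ to a consistent orientation $ O $ of $ \tau $ whose projection to $ \vSp $ is $ O_p $; in particular $ \vr,\sv\in O $.

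\emph{Distinctness and the main obstacle.} With such an $ O $, the two maximal elements $ \vrdash\ge\vr $ and $ \svdash\ge\sv $ have projections in $ O_p $ with $ \vr_p\le(\vrdash)_p $ and $ \sv_p\le(\svdash)_p $; since every element of $ O_p $ lies below a unique maximal element of the finite tree set $ \vSp $, these projections lie below $ \va $ and $ \vb $ respectively. Were $ \vrdash=\svdash $, their common projection would lie below both $ \va $ and $ \vb $, hence below both orientations of some separation of $ \vSp $ and so be trivial, which is impossible in the tree set $ \vSp $. Thus $ \vrdash\ne\svdash $, and the reduction completes the proof. The step I expect to be genuinely delicate is the lifting of $ O_p $ to a truly consistent orientation $ O $ of $ \tau $: a non-small separation of $ \tau $ may project to a small separation of $ \vSp $, and such a collapse is exactly what could spoil the consistency of the naive pull-back $ \{\vx\in\tau\mid \vx_p\in O_p\} $. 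I would resolve this by appealing to the correspondence between consistent orientations of an inverse limit and compatible families of finite consistent orientations established in~\cite{ProfiniteASS}, extending $ O_p $ through the levels $ q\ge p $ by a compactness argument; this is also where profiniteness, rather than mere chain-completeness, does the real work.
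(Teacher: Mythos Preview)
Your overall strategy---produce a consistent orientation of $\tau$ in which $\vr$ and $\sv$ lie below \emph{distinct} maximal elements, then invoke Lemma~\ref{lem:twoclosedgeneral}---is also the paper's strategy. The gap is in how you manufacture those two maximal elements. You write $\tau=\invlim(\vSp\mid p\in P)$ ``with each $\vSp$ a finite tree set'' and then use this twice: to get $O_p$ with distinct maximal elements via splittability of finite tree sets, and to conclude $\vrdash\ne\svdash$ from ``impossible in the tree set $\vSp$''. But at this point in the paper a profinite tree set is only known to be an inverse limit of finite (nested) \emph{separation systems}; obtaining finite tree sets is exactly Theorem~\ref{thm:treesets}(ii), for which the present proposition is an ingredient. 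So the assumption is circular. And it matters: the $\vSp$ may well contain trivial separations---this is precisely the ``finitely trivial'' phenomenon of~\cite[Example~5.7]{ProfiniteASS}---and then $\sv_p$ may be co-trivial (so no consistent $O_p$ contains it), or the up-closure of $\vr_p$ in $O_p$ may fail to be a chain, or your distinctness argument may land on a genuinely trivial projection. One can partially repair things by choosing $p$ large enough that $\vs_p$ and $\rv_p$ are non-small (this is possible since $\vs$ and $\rv$ are non-small in the tree set $\tau$), but $\vr_p$ and $\sv_p$ may still be small, and the argument does not close.

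The paper sidesteps all of this by working in $\tau$ rather than in $\vSp$. It fixes any $p$ with $\vr_p\lne\vs_p$ and uses the projection only to cut the chain $\{\vx\in\tau\mid\vr\le\vx\le\vs\}$ into two nonempty pieces: those $\vx$ with $\vx_p=\vr_p$, and the rest. Taking $\vrdash$ to be the supremum of the first piece and $\vsdash$ the infimum of the second (both exist by Lemma~\ref{lem:chains}) gives $\vr\le\vrdash\lne\vsdash\le\vs$ with nothing strictly between $\vrdash$ and $\vsdash$. The Extension Lemma then yields a consistent orientation in which $\vrdash$ is maximal, hence $\svdash$ is maximal too, and Lemma~\ref{lem:twoclosedgeneral} finishes. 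No structural hypothesis on $\vSp$ beyond finiteness is used, and no lifting of orientations is required.
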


\begin{proof}
	Let $\tau=\invlim (S_p\mid p\in P)$ be a profinite tree set and $ \vr,\vs\in\tau $ with $ \vr\lne\vs $. Fix $ p\in P $ such that $ \vr_p\lne\vs_p $ and set
	\[ X:=\menge{\vx\in\tau\mid \vr\le\vx\le\vs\tn{ and }\vx_p=\vr_p}. \]
	Then $ X $ is a chain with $ \vr\in X $ which by Lemma~\ref{lem:chains} has a supremum $ \vrdash=(\max X_q\mid q\in P) $. Now set
	\[ Y:=\menge{\vy\in\tau\mid \vr\le\vy\le\vs\tn{ with }\vr_p\le\vy_p\tn{ and }\vr_p\ne\vy_p}. \]
	This is a chain with $ \vs\in Y $ and infimum $ \vsdash=(\min Y_q\mid q\in P) $. By definition we have $ \vr\le\vrdash\lne\vsdash\le\vs $. Furthermore no $ \vt\in\tau $ lies strictly between $ \vrdash $ and $\vsdash $ as such a $ \vt $ would lie in $ X\cup Y $ and thus contradict the definition of either $ \vrdash $ or~$ \vsdash $. By Lemma~\ref{lem:extension} there is a consistent orientation~$ O $ of~$ \tau $ extending $ \{\vrdash,\svdash\} $ in which $ \vrdash $ and therefore $ \svdash $ is maximal. Lemma~\ref{lem:twoclosedgeneral} says that $ O $ is splitting, so $ \vrdash $ and $ \svdash $ lie in a common splitting star.
\end{proof}

The next assumption made by Proposition~\ref{prop:nonregisom} is that every infinite splitting star of the tree set at hand contains a small separation. This is the same as asking that the tree set contains no regular infinite splitting star. In fact we can show slightly more for profinite tree sets:

\begin{PROP}\label{prop:starfinite}
	Let $\tau$ be a profinite tree set. Then every infinite star which is maximal by inclusion contains a small separation. 
\end{PROP}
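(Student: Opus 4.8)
The plan is to combine the profinite structure of~$\tau$ with a compactness argument. Write $\tau=\invlim(\vSp\mid p\in P)$ as an inverse limit of finite separation systems, with projection maps $\pi_p\colon\tau\to\vSp$ given coordinatewise, and let $\sigma$ be an infinite star that is maximal by inclusion. Since each $\vSp$ is finite while $\sigma$ is infinite, for every~$p$ the pigeonhole principle produces an element of~$\vSp$ that is the projection of infinitely many separations in~$\sigma$. I would collect these into the sets
\[ A_p:=\menge{\va\in\vSp\mid \pi_p(\vx)=\va\ \text{for infinitely many}\ \vx\in\sigma}, \]
each of which is non-empty. Because any separation in~$\sigma$ whose $\pi_q$-image is a fixed $\va\in A_q$ also has $\pi_p$-image $f_{q\to p}(\va)$, the bonding maps restrict to maps $A_q\to A_p$ for $q\ge p$, so the $A_p$ form an inverse subsystem of non-empty finite sets. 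Its inverse limit is therefore non-empty, and I would fix an element $\vs=(\vsp\mid p\in P)\in\invlim(A_p\mid p\in P)\sube\tau$.

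Next I would show that this $\vs$ is small, i.e. that $\vsp\le\svp$ for every~$p$. By construction $\vsp\in A_p$ has at least two distinct preimages $\vx,\vy\in\sigma$; applying the star property of~$\sigma$ to these two distinct elements gives $\vx\le\yv$, which projects to $\vsp=\pi_p(\vx)\le\pi_p(\yv)=\svp$. As this holds for all~$p$, we get $\vs\le\sv$, so $\vs$ is small.

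It then remains to locate $\vs$ inside~$\sigma$, and for this I would invoke maximality by showing that $\sigma\cup\{\vs\}$ is again a star. Fix $\vy\in\sigma$ with $\vy\ne\vs$ and a~$p\in P$; since $\vsp$ has infinitely many preimages in~$\sigma$, I can pick one, $\vx$, distinct from both $\vy$ and~$\yv$. The star property gives $\vx\le\yv$, which projects to $\vsp=\pi_p(\vx)\le\pi_p(\yv)$. Letting $p$ range over~$P$ yields $\vs\le\yv$, and since $\vy\in\sigma$ was arbitrary (and the elements of~$\tau$ are non-degenerate, $\tau$ being a tree set) the set $\sigma\cup\{\vs\}$ is a star. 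By the maximality of~$\sigma$ this forces $\vs\in\sigma$, so $\sigma$ contains the small separation~$\vs$, as desired.

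The step I expect to be the crux is the passage from ``each $A_p$ is non-empty'' to ``the limit $\vs$ exists in~$\tau$'': this is precisely where finiteness of the~$\vSp$ enters, through the non-emptiness of the inverse limit of an inverse system of non-empty finite sets over a directed index set. The remaining work is just projecting star inequalities downward, which is routine; the one point to handle with care is that $\sigma$ contains no pair of mutually inverse separations — this follows from $\tau$ having no trivial separations together with $\sigma$ being infinite — so that the two chosen preimages really yield a non-vacuous star inequality in the arguments above.
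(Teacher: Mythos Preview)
Your proposal is correct and follows essentially the same approach as the paper's proof: both define the sets of coordinates hit by infinitely many elements of~$\sigma$, take an element~$\vs$ of their inverse limit, observe~$\vs$ is small by projecting the star inequality between two distinct preimages, and conclude $\vs\in\sigma$ by maximality. Your write-up is somewhat more careful about the bookkeeping (checking that the bonding maps restrict, and that the preimage used for $\vs\le\yv$ can be chosen distinct from~$\vy$), but the underlying argument is the same.
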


\begin{proof}
	Suppose $\sigma\subseteq\tau$ is an infinite maximal star, and~$\tau=\invlim (S_p\mid p\in P)$. Let $\sigma_p$ be the projection of $\sigma$ to~$S_p$. For every $p\in P$ there must be some $\vs_p\in S_p$ which is the image of infinitely many elements of~$\sigma$. As $\sigma$ is a star such a $\vs_p$ has to be small. For $p\in P$ let $\sigma_p'$ be the set of all $\vs_p\in S_p$ which are the projection of infinitely many elements of~$\sigma$. Then $(\sigma_p'\mid p\in P)$ is an inverse system of finite sets with a non-empty inverse limit, and its elements are also elements of~$\tau$. Let $\vs\in\invlim (\sigma_p'\mid p\in P)$ be such an element. As every $\vs_p$ is small so is~$\vs$. Moreover $\vs_p\le\rv_p$ for all $p\in P$ and $\vr\in\sigma$, so $\vs\in\sigma$ by maximality.
\end{proof}

As every splitting star of a tree set is also an inclusion-maximal star, Proposition~\ref{prop:starfinite} clearly implies that profinite tree sets contain no regular infinite splitting stars.

Let us call a tree set~{\em star-finite} if it contains no infinite star. Then Proposition~\ref{prop:starfinite} implies that all regular profinite tree sets are star-finite:

\begin{COR}
	Regular profinite tree sets are star-finite.
\end{COR}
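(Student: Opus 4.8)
The plan is to argue by contradiction and reduce the claim to Proposition~\ref{prop:starfinite} by way of a short maximality argument. Suppose some regular profinite tree set~$\tau$ is \emph{not} star-finite, so that~$\tau$ contains an infinite star~$\sigma_0$. Since Proposition~\ref{prop:starfinite} speaks only about infinite stars that are \emph{maximal by inclusion}, the first thing I would do is upgrade~$\sigma_0$ to such a maximal star.

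The key observation for this upgrade is that the union of any chain of stars (ordered by inclusion) is again a star: any two distinct separations~$\vr,\vs$ of such a union already lie together in a single member of the chain, where they satisfy the defining star inequality~$\vr\le\sv$, and this inequality then holds in the union as well. Consequently the poset of all stars of~$\tau$ containing~$\sigma_0$, ordered by inclusion, meets the hypotheses of Zorn's Lemma, so~$\sigma_0$ extends to an inclusion-maximal star~$\sigma\supseteq\sigma_0$. Because~$\sigma_0\sube\sigma$ is infinite, $\sigma$ is an infinite inclusion-maximal star.

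Now I would simply invoke Proposition~\ref{prop:starfinite}: as~$\sigma$ is an infinite star that is maximal by inclusion, it must contain a small separation. But~$\tau$ is regular and therefore contains no small separation at all. This contradiction shows that~$\tau$ can have no infinite star, i.e.\ that~$\tau$ is star-finite, as claimed.

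The only point demanding any care is the passage from an arbitrary infinite star to an infinite \emph{maximal} one, since it is the maximality that lets Proposition~\ref{prop:starfinite} apply; this is a routine application of Zorn's Lemma, the substance of which is the remark that stars are closed under unions of chains. Everything else is immediate from the definition of regularity as the absence of small separations.
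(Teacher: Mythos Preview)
Your proof is correct and follows the same route as the paper: deduce from Proposition~\ref{prop:starfinite} that an infinite star forces a small separation, contradicting regularity. The paper's one-line proof simply omits the passage from an arbitrary infinite star to a maximal one, which you have (rightly) made explicit via Zorn's Lemma; alternatively, inspecting the proof of Proposition~\ref{prop:starfinite} shows that the small separation is produced in~$\tau$ before maximality is ever invoked, so the Zorn step can also be bypassed.
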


\begin{proof}
	If a profinite tree set contains an infinite star by Proposition~\ref{prop:starfinite} it also contains a small separation. Hence regular profinite tree sets do not contain infinite stars.
\end{proof}

A tree set $ \tau $ that contains no infinite star clearly contains no regular infinite splitting star either. Furthermore if $ C(s,s') $ was infinite for any $ s,s'\in\tau $ then the set of all separations in $ \tau\setminus C(s,s') $ that belong to a branching star of $ \tau $ which meets $ C(s,s') $ is an infinite star in $ \tau $. Therefore~Lemma~\ref{lem:chains} together with Proposition~\ref{prop:nonregisom},~\ref{prop:splittable} and~\ref{prop:starfinite} implies the following characterization of the regular profinite tree sets:

\begin{THM}\label{thm:profinitecharregular}
	{\em A regular tree set $\tau$ is profinite if and only if it is chain-complete, splittable and star-finite.}\hfill$\Box$
\end{THM}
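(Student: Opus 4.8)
The plan is to prove Theorem~\ref{thm:profinitecharregular} as a biconditional, obtaining each direction from the machinery already assembled. For the forward direction, suppose $\tau$ is a regular profinite tree set. Then Lemma~\ref{lem:chains} shows directly that $\tau$ is chain-complete, since every non-empty chain has a supremum (and infimum) in the inverse limit. Proposition~\ref{prop:splittable} gives splittability. For star-finiteness I would invoke the Corollary immediately preceding the theorem: a regular profinite tree set is star-finite, which followed from Proposition~\ref{prop:starfinite} (an infinite star would force a small, hence non-regular, separation). Thus all three combinatorial properties hold, and this direction is essentially a matter of quoting the three preceding results.

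For the converse, suppose $\tau$ is a regular tree set that is chain-complete, splittable and star-finite; I must exhibit $\tau$ as an inverse limit of finite tree sets. Here the strategy is to verify that $\tau$ satisfies the hypotheses of Proposition~\ref{prop:nonregisom} and then apply it. Chain-completeness and splittability are assumed outright. Star-finiteness is the key input for the remaining two hypotheses: first, a star-finite tree set contains no infinite star at all, so in particular it has no infinite regular splitting star. Second, I would argue that $C(s,s')$ is finite for all regular $s,s'$; as the excerpt itself remarks just before stating the theorem, if $C(s,s')$ were infinite then the separations of $\tau\smallsetminus C(s,s')$ lying in branching stars meeting $C(s,s')$ would form an infinite star, contradicting star-finiteness. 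Since $\tau$ is regular, every infinite splitting star would need a small separation, so the function $\nu$ demanded by Proposition~\ref{prop:nonregisom} can be chosen trivially (there are no infinite splitting stars to worry about). Proposition~\ref{prop:nonregisom} then yields an isomorphism $\ph\colon\tau\to\invlim(\tau/D\mid D\in\D)$ onto an inverse limit of finite tree sets, establishing that $\tau$ is profinite.

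The step I expect to require the most care is the verification of the finiteness of $C(s,s')$ from star-finiteness, since this is the one implication not handed over verbatim by a prior result and rests on the structural observation about branching stars meeting $C(s,s')$. I would want to make precise that $C(s,s')$ being the disjoint union of two chains means an infinite $C(s,s')$ must meet infinitely many distinct branching stars, each contributing a separation to the purported infinite star; I would lean on the earlier remark that each branching star met by $C(s,s')$ in a point other than $s,s'$ is met in exactly two elements. Once this is pinned down, the application of Proposition~\ref{prop:nonregisom} is immediate, and the whole proof is short enough to present as a single paragraph citing Lemma~\ref{lem:chains} and Propositions~\ref{prop:nonregisom}, \ref{prop:splittable} and~\ref{prop:starfinite} as the theorem statement already anticipates.
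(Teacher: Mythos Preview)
Your proposal is correct and follows essentially the same approach as the paper: the forward direction cites Lemma~\ref{lem:chains}, Proposition~\ref{prop:splittable}, and the corollary of Proposition~\ref{prop:starfinite}, while the converse verifies the hypotheses of Proposition~\ref{prop:nonregisom} using star-finiteness (in particular the observation that an infinite $C(s,s')$ would yield an infinite star from the branching stars meeting it). The paper itself presents the theorem with a $\Box$ and no separate proof, deriving it from exactly the same ingredients you name.
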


To complete the proof of Theorem~\ref{thm:treesets} it remains to show that that $ C(s,s') $ is finite for all regular $ s,s' $ in a profinite tree set $ \tau $. We do this in three steps. First we show that every infinite chain has some limit element. Then we show that if $ \vm\in\tau $ is the supremum of a chain of branching points it must be co-small; and finally we infer that $ C(s,s') $ can only be infinite if one of~$ s $ and~$ s' $ is non-regular.

The first step is more about posets and chain-complete tree sets than about profinite tree sets:

\begin{LEM}\label{lem:middlelimit}
Let~$ \tau $ be a chain-complete tree set and $ C $ an infinite chain in~$ \tau $. Then there is a sub-chain $ C'\sube C $ that does not contain both its infimum and its supremum in~$ \tau $.
\end{LEM}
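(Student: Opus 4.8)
The plan is to reduce the statement to a purely order-theoretic fact about the linear order $C$, using only that the ambient poset $\tau$ is chain-complete (the tree-set structure itself plays no role). First I would record the elementary observation that, for any nonempty sub-chain $C'\sube C$, its supremum $\sup C'$ (which exists in $\tau$ by chain-completeness, and dually its infimum by the infimum lemma above) lies in $C'$ if and only if $C'$ has a greatest element, and $\inf C'$ lies in $C'$ if and only if $C'$ has a least element. Indeed, if $\inf C'\in C'$ then this element is a lower bound of $C'$ lying inside $C'$, hence the minimum of $C'$, and conversely a minimum of $C'$ is its infimum. Thus, to prove the lemma, it suffices to exhibit a nonempty sub-chain $C'\sube C$ that has no least element, or one that has no greatest element: in the first case $\inf C'\notin C'$ and in the second $\sup C'\notin C'$, so $C'$ fails to contain both its infimum and its supremum.

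Second, I would split according to whether $C$, viewed as a linear order, is well-ordered. If $C$ is not well-ordered, then by definition there is a nonempty subset $S\sube C$ with no least element; since $S$ is automatically a chain, I can take $C':=S$, and by the observation above $\inf C'\notin C'$, as desired. Dually, if $C$ is not co-well-ordered, i.e.\ some nonempty subset of $C$ has no greatest element, the same reasoning produces a $C'$ with $\sup C'\notin C'$.

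Finally, I would close the remaining case by recalling the standard fact that a linear order that is simultaneously well-ordered and co-well-ordered is finite: a well-ordered set is order-isomorphic to an ordinal, and if that ordinal is infinite it contains an initial segment of type $\omega$, which has no greatest element and so witnesses that the order is not co-well-ordered. Since $C$ is infinite by hypothesis, it cannot be both well-ordered and co-well-ordered, so at least one of the two cases above applies and yields the desired sub-chain $C'$. There is no serious obstacle here; the only points requiring care are the equivalence between ``containing the infimum (resp.\ supremum)'' and ``having a minimum (resp.\ maximum)'', and the invocation of the elementary order-theoretic lemma, both of which are routine.
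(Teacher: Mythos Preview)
Your argument is correct. The reduction to the purely order-theoretic statement is sound: since $\tau$ is chain-complete, $\sup C'$ and $\inf C'$ exist for every nonempty sub-chain $C'$, and indeed $\sup C'\in C'$ iff $C'$ has a maximum (and dually). Your trichotomy---$C$ not well-ordered, $C$ not co-well-ordered, or $C$ finite---is exhaustive, and the last case is ruled out by hypothesis. This establishes the lemma.

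Your route differs from the paper's. The paper does not invoke well-orderings or ordinals; instead it builds an explicit candidate sub-chain: it sets $L:=\{\vs\in C : C_{\le\vs}\text{ is finite}\}$ and argues that either $\sup L\notin L$ (so $L$ works), or else $L$ is finite, in which case $R:=C\setminus L$ is infinite and cannot contain its infimum (any putative minimum $\vr$ of $R$ would have infinite $C_{\le\vr}$, yielding an element of $R$ strictly below $\vr$). So the paper isolates the `boundary' between the finite initial segments and the rest, while you appeal to the classical fact that an infinite linear order fails to be well-ordered in at least one direction and take any witnessing subset. Your argument is shorter and more conceptual; the paper's is self-contained and produces a canonical $C'$. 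Both use only chain-completeness, not the tree-set structure.
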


\begin{proof}
We may assume that $ C $ contains its infimum and supremum in~$ \tau $ as otherwise $ C':=C $ is as desired.

Let us define
\[ C_{\le\vs}:=\menge{\vr\in C\mid \vr\le\vs} \]
for $ \vs\in C $ and define
\[ L:=\menge{\vs\in C\mid C_{\le\vs}\tn{ is finite}}. \]
Then $ L $ is a non-empty sub-chain of $ C $. Let $ \vl $ be the supremum of $ L $ in~$ \tau $; if $ \vl\notin L $ then $ L $ is as desired. If on the other hand $ \vl\in L $ then $ L $ is finite, so $ R:=C\setminus L $ is infinite. Let $ \vr $ be the infimum of $ R $ in~$ \tau $. If $ \vr\in R $ then $ C_{\le\vr} $ is infinite, so there is a $ \vt\in C_{\le\vr}\setminus (L\cup\{\vr\}) $. But this contradicts the fact that $ \vr $ is the infimum of $ R $. Therefore $ \vr\notin R $ and $ R $ is the desired sub-chain.
\end{proof}

Now we prove that the supremum of a chain of branching points is co-small. The proof of this is somewhat analogous to the proof of Proposition~\ref{prop:starfinite}:

\begin{LEM}\label{lem:branchingchain}
Let $ \tau=\invlim(S_p\mid p\in P) $ be a profinite tree set, $ C $ a chain of infinitely many branching points and $ \vm $ the supremum of $ C $ in~$ \tau $. If~$ \vm\notin C $ then~$ \vm $ is co-small.
\end{LEM}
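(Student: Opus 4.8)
The statement is the $\mv$-analogue of Proposition~\ref{prop:starfinite}, and the plan is to run the same inverse-system/compactness scheme, the one difference being that the small separation must be mined from the branching stars strung along $C$ rather than from the elements of a single maximal star. The first move is to reduce to a pointwise statement. Since a separation is small exactly when it lies below its inverse, $\mv$ is small if and only if $\mv\le\vm$, and because the order on $\invlim(S_p\mid p\in P)$ is coordinatewise, it suffices to show that $\mv_p$ is small in $S_p$ for every $p\in P$; assembling these facts then gives $\mv\le\vm$, i.e. that $\vm$ is co-small.

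Next I would record the behaviour of the projections. As $C$ is a chain and each $S_p$ is finite, every $C_p:=\{\vc_p\mid\vc\in C\}$ is a finite chain. By Lemma~\ref{lem:chains} the supremum $\vm$ lies in the closure of $C$, so for each $p$ there is a $\vc\in C$ with $\vc_p=\vm_p$; since $\vc\le\vm$ forces $\vc_p\le\vm_p$, this yields $\vm_p=\max C_p$. Fixing one such $\vc$, every $\vc'\in C$ with $\vc'\ge\vc$ satisfies $\vm_p=\vc_p\le\vc'_p\le\vm_p$, hence $\vc'_p=\vm_p$; and the up-set $\{\vc'\in C\mid\vc'\ge\vc\}$ has no largest element (its supremum is $\vm\notin C$) and is therefore infinite. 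Thus $\vc'_p=\vm_p$, and dually $\cv'_p=\mv_p$, for infinitely many branching points $\vc'$ of $C$.

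The branching hypothesis enters as follows. Fix $p$ and a branching point $\vc\in C$ with $\vc_p=\vm_p$, lying in a branching star $\sigma$, and let $O$ be the consistent orientation with $O\subseteq\dcl(\sigma)$ witnessing that $\sigma$ is splitting. Since $\vc\le\vm$ with $\vc$ maximal in $O$ and $\vm\ne\vc$ (as $\vm\notin C$), we have $\vm\notin O$, so $\mv\in O$ and hence $\mv\le\vd$ for some $\vd\in\sigma$. If $\vd=\vc$ then $\mv\le\vc\le\vm$ and we are finished at once; otherwise $\vd\ne\vc$ gives $\mv\le\vd\le\cv$ by the star property, so $\vd$ projects \emph{exactly} onto $\mv_p$. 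Running this over the infinitely many branching points of $C$ landing on $\vm_p$, I would feed the resulting ``backward'' elements, together with the third (``sideways'') elements of the stars, into the machinery of Proposition~\ref{prop:starfinite}: one checks from the star axioms and nestedness that the sideways elements form an infinite star, so that a pigeonhole collision of their images in the finite $S_p$ produces a small separation, and the inverse limit of these finite witnesses is assembled exactly as in the second half of the proof of Proposition~\ref{prop:starfinite}.

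The genuinely delicate point — and the step I expect to be the main obstacle — is to pin the small separation produced by this compactness argument onto the correct orientation $\mv$ of $m$, rather than onto some unrelated small separation hanging off $C$. The backward elements all project precisely to $\mv_p$ but form only a chain, so they carry no collision; the sideways elements form a star and hence do collide into a small image, but a priori that image sits strictly below $\mv_p$. Bridging these two families — that is, showing that the small separation obtained in $S_p$ is below $\vm_p$ on the $m$-coordinate, so that $\mv_p$ itself is small — is exactly where one must exploit both that $\vm\notin C$ (so that $C$ genuinely accumulates at $m$) and the precise way the bonding maps interact with the branching stars. This plays the role here that maximality of the star plays in Proposition~\ref{prop:starfinite}, and it is appreciably more subtle.
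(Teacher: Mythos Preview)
Your strategy is the paper's, and you have correctly located the crux, but the proof as written is incomplete: the ``bridge'' you flag as the main obstacle is in fact a one-line observation for which you already have the ingredients. Choose the sideways element not arbitrarily but as any $\vt\in\sigma$ with $\vt\ne\vc$ and $\vt\ne\vd$, where $\vd\in\sigma$ is the element you already produced satisfying $\mv\le\vd$. Since $\vt$ and $\vd$ are distinct members of the star~$\sigma$, the star property gives $\vt\le\dv$; and $\mv\le\vd$ means $\dv\le\vm$, so $\vt\le\vm$. Together with $\vt\le\cv$ (star property again, from $\vt\ne\vc$) and your observation $\cv_p=\mv_p$, this yields $\vt_p\le\vm_p$ and $\vt_p\le\mv_p$, whence $\vt_p$ is small. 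So the sideways elements already project below \emph{both} orientations of~$m_p$, not merely ``strictly below~$\mv_p$''; nothing further needs bridging.

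There is also a second, smaller gap in your plan: deducing that $\mv_p$ itself is small inside each~$S_p$ from the above would require the $S_p$ to have no trivial elements, which is not assumed. The paper instead finishes at the level of~$\tau$. After reducing to $C=(\vs^n)_{n\in\N}$ of order type~$\omega$, pick $\vt^n$ as above from the branching star of~$\vs^n$, set $T_p:=\{\vt^n_p\mid n\ge k(p)\}$ with $k(p)$ minimal such that $\vs^{k(p)}_p=\vm_p$, and take any $\vt\in\invlim(T_p\mid p\in P)$. Then $\vt$ is small with $\vt\le\vm$ and $\vt\le\mv$; since $\tau$ is a tree set this forces $t=m$, and as $\vm$ being small would render every $\vs^n$ trivial, it is $\mv$ that is small. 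Your ``backward'' elements~$\vd$ are therefore not needed as a separate family --- their only role is to single out which star element to \emph{avoid} when choosing the sideways one.
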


\begin{proof}
As co-small separations are maximal in~$ \tau $ we may assume without loss of generality that $ C $ is a chain of order type~$ \omega $.

Let $ C=\{\vs^n\mid n\in\N\} $ with $ \vs^n\lne\vs^{n+1} $ for all~$ n\in\N $. For every $ n\in\N $ pick an element $ \vt^n $ of the branching star containing $ \vs^n $ with~$ \vt^n\lne\vs^{n+1} $. Let $ \vm=(\vm_p\mid p\in P) $ be the supremum of $ C $. Then by Lemma~\ref{lem:chains} for any fixed $ p\in P $ there is an $ n\in\N $ with $ \vs^n_p=\vm_p $; let $ k(p)\in\N $ be the minimal such index and write
\[ T_p:=\menge{\vt^n_p\mid n\ge k(p)}. \]
Observe that if $ n\ge k(p) $ for some $ p\in P $ then $ \vt^n\le\vm,\sv^n $ by definition and hence $ \vt^n_p\le\vm_p $ as well as $\vt^n_p\le\sv^n_p=\mv_p $, and as a consequence also~$ \vt^n_p\le\tv^n_p $.

Moreover $ k(p)\le k(q) $ for all $ p,q\in P $ with $ p\le q $, so $ (T_p\mid p\in P) $ is an inverse system whose inverse limit is a subset of~$ \tau $. Let~$ \vt\in\invlim(T_p\mid p\in P) $. By the above observation we have $ \vt\le\vm $ as well as $ \vt\le\mv $, and thus $ t=m $, since otherwise $ m $ would witness that $ \vt $ is trivial. Moreover $ \vt $ is small by the above observation. Therefore one of $ \vm $ and $ \mv $ is small; but the first of these is impossible since then then every $ \vs^n $ would be trvial. Therefore $ \mv $ is small, that is, $ \vm $ is co-small.
\end{proof}

With these two lemmas we can now show that $ C(s,s') $ is finite for all regular~$ s,s' $ in profinite tree sets:

\begin{PROP}\label{lem:C(s,s')}
Let~$ \tau $ be a profinite tree set and $ s,s'\in\tau $ two regular unoriented separations. Then $ C(s,s') $ is finite.
\end{PROP}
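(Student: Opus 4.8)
The plan is to argue by contradiction: assuming $C(s,s')$ is infinite, I will produce a small orientation of $s$ or of $s'$, contradicting their regularity. Fix orientations with $\vs\le\vsdash$. As recorded before the statement, $C(s,s')$ is the disjoint union of two chains of branching points; write $C_+$ for those $\vb$ with $\vs\le\vb\le\vsdash$ and $C_-$ for those with $\svdash\le\vb\le\sv$. Every branching star that $C(s,s')$ meets away from $s,s'$ is met in exactly two separations, one from each chain, so $C_+$ and $C_-$ are matched up star by star. The star property then makes this matching order-reversing: if $\vb_+\le\vb_+'$ in $C_+$ then their partners satisfy $\vb_-'\le\vb_-$ in $C_-$. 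In particular the matching is a bijection, so if $C(s,s')$ is infinite then both $C_+$ and $C_-$ are infinite.

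Since $\tau$ is chain-complete by Lemma~\ref{lem:chains}, I would apply Lemma~\ref{lem:middlelimit} to the infinite chain $C_+$ to obtain an infinite sub-chain $C'\sube C_+$ that does not contain both its infimum and its supremum in $\tau$; note $\vs\le\vc\le\vsdash$ for every $\vc\in C'$. The favourable case is that $C'$ misses its supremum $\vm$. Then $\vm$ is the supremum of a chain of infinitely many branching points with $\vm\notin C'$, so Lemma~\ref{lem:branchingchain} shows that $\vm$ is co-small and hence maximal in $\tau$. But $\vm\le\vsdash$, forcing $\vm=\vsdash$; thus $\vsdash$ is co-small and $\svdash$ is small, contradicting the regularity of $s'$.

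It remains to treat the case in which $C'$ misses its infimum, and here the key idea is to pass to the companion chain rather than to dualize Lemma~\ref{lem:branchingchain}. Let $C'_-\sube C_-$ consist of the partners of the elements of $C'$. By the order-reversing correspondence, $C'_-$ is an infinite chain of branching points whose supremum is \emph{not} attained (as $C'$ has no least element), and every element of $C'_-$ satisfies $\vb\le\sv$. Applying Lemma~\ref{lem:branchingchain} to $C'_-$ shows its supremum is co-small, hence maximal, hence equal to $\sv$; so $\sv$ is co-small and $\vs$ is small, contradicting the regularity of $s$. Either way we obtain a contradiction, so $C(s,s')$ is finite. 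The step I expect to need the most care is exactly this reduction of the infimum case to the supremum case through the order-reversing pairing of $C_+$ and $C_-$: it is what lets me feed a \emph{supremum} of branching points into Lemma~\ref{lem:branchingchain} in both cases, so that the only two lemmas required are Lemma~\ref{lem:middlelimit} and Lemma~\ref{lem:branchingchain}.
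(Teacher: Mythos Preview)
Your proof is correct and follows essentially the same approach as the paper's: use Lemma~\ref{lem:middlelimit} to find a sub-chain of branching points missing its supremum or infimum, then invoke Lemma~\ref{lem:branchingchain} to force a co-small separation that must coincide with one of $\vs,\sv,\vsdash,\svdash$, contradicting regularity. The only difference is cosmetic: the paper simply declares the infimum case ``symmetrical'', while you make the symmetry explicit via the order-reversing pairing with the companion chain~$C_-$ so that Lemma~\ref{lem:branchingchain} (stated only for suprema) applies in both cases.
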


\begin{proof}
Suppose that $ C(s,s') $ is infinite. Then $ C(s,s') $ is the disjoint union of two infinite chains. Let $ C $ be one of them. By Lemma~\ref{lem:middlelimit} there is a sub-chain $ C' $ of $ C $ that does not contain both its infimum and supremum in~$ \tau $; suppose that $ C' $ does not contain its supremum (the other case is symmetrical). Let $ \vm $ be the supremum of $ C' $. Lemma~\ref{lem:branchingchain} implies that $ \vm $ is co-small. But one of $ \vs,\sv,\vsdash $ or $ \svdash $ is an upper bound for $ C' $. As co-small elements are maximal in~$ \tau $ it follows that $ m=s $ or $ m=s' $, a contradiction.
\end{proof}

Lemma~\ref{lem:chains} and Proposition~\ref{prop:nonregisom}, \ref{prop:splittable}, \ref{prop:starfinite} and~\ref{lem:C(s,s')} combine into the following theorem characterizing the profinite tree sets:

\begin{THM}\label{thm:profinitechar}
{\em A tree set~$ \tau $ is profinite if and only if it is chain-complete and splittable, contains no regular infinite splitting star, and has the property that~$ C(s,s') $ is finite for all regular~$ s,s' $.}\hfill$ \Box $
\end{THM}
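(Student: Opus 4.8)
The plan is to prove the two implications separately, assembling results already established in this section; the statement is in essence a bookkeeping corollary, since all the substantive work has been carried out in Proposition~\ref{prop:nonregisom} together with Lemma~\ref{lem:chains} and Propositions~\ref{prop:splittable}, \ref{prop:starfinite} and~\ref{lem:C(s,s')}.

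For the forward direction I would assume that $\tau=\invlim(S_p\mid p\in P)$ is a profinite tree set and verify each of the four listed properties in turn. Chain-completeness is immediate from Lemma~\ref{lem:chains}, which supplies a supremum (and infimum) for every non-empty chain of~$\tau$. Splittability is precisely Proposition~\ref{prop:splittable}. That $\tau$ contains no regular infinite splitting star follows from Proposition~\ref{prop:starfinite}: every splitting star is an inclusion-maximal star, so an infinite one must contain a small separation and hence cannot be regular. Finally, the finiteness of $C(s,s')$ for all regular $s,s'$ is exactly the content of Proposition~\ref{lem:C(s,s')}. This settles the `only if' part.

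For the backward direction I would assume that $\tau$ is chain-complete and splittable, contains no regular infinite splitting star, and has $C(s,s')$ finite for all regular $s,s'$. These are verbatim the hypotheses of Proposition~\ref{prop:nonregisom}. Since no infinite splitting star of~$\tau$ is regular, each such star contains a small separation, so for every infinite splitting star $\sigma$ we may choose a small $\nu(\sigma)\in\sigma$ and form the directed set $\D$ as in that proposition. Proposition~\ref{prop:nonregisom} then yields that $(\tau/D\mid D\in\D)$ is an inverse system of finite tree sets and that $\ph\colon\tau\to\invlim(\tau/D\mid D\in\D)$ is an isomorphism of tree sets. As $\tau$ is thereby exhibited as the inverse limit of an inverse system of finite (tree) sets, it is profinite, completing the `if' part.

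I do not expect any genuine obstacle in assembling this theorem: all of the difficulty is concentrated in the earlier propositions, most notably the surjectivity argument inside Proposition~\ref{prop:nonregisom} and the co-smallness argument of Lemma~\ref{lem:branchingchain} feeding into Proposition~\ref{lem:C(s,s')}. The only point requiring a moment's care here is the passage between the two phrasings `no regular infinite splitting star' and `every infinite splitting star contains a small separation', which are used interchangeably; these agree because a splitting star is regular precisely when none of its elements is small, so the existence of the chosen separations $\nu(\sigma)$ is guaranteed by the hypothesis on~$\tau$.
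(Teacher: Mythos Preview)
Your proposal is correct and matches the paper's own treatment exactly: the theorem is stated there as an immediate consequence of Lemma~\ref{lem:chains} and Propositions~\ref{prop:nonregisom}, \ref{prop:splittable}, \ref{prop:starfinite} and~\ref{lem:C(s,s')}, with no further argument given. Your observation that the phrasings ``no regular infinite splitting star'' and ``every infinite splitting star contains a small separation'' coincide is the only point one might pause over, and you handle it appropriately.
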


Moreover we can now prove Theorem~\ref{thm:treesets}(ii), that is, that every profinite tree set is an inverse limit of finite tree sets:\\
\\
{\em Proof of Theorem~\ref{thm:treesets}}(ii). Let $ \tau $ be a profinite tree set. From Theorem~\ref{thm:profinitechar} it follows that~$ \tau $ meets the assumptions of Proposition~\ref{prop:nonregisom}, which together with Proposition~\ref{prop:fintreeset} implies that $ \tau $ is an inverse limit of finite tree sets.\hfill$ \Box $\\
\\
Therefore the profinite tree sets are indeed precisely those tree sets that are an inverse limit of finite tree sets.\\
\\
Let us conclude this section by proving by example that the four properties in Theorem~\ref{thm:profinitechar} are independent of each other, in the sense that none of them is implied by the others:

\begin{EX}\label{ex:chain-complete}
	Let $ R $ be a one-way infinite ray. Then the edge tree set $ \tau(R) $ is a regular tree set which is splittable and star-finite but not chain-complete.\\
\end{EX}

\begin{EX}\label{ex:splittable}
	Let
	\[ \tau:=\braces{[-2,-1]\cup[1,2],\,\curlyle\,,\,*}, \]
	where $ x^*:=-x $, and $ x\curlyle y $ if and only if $ x\le y $ as real numbers and $ x $ and $ y $ have the same sign. Then~$ \tau $ is chain-complete and star-finite, but $ \tau $ is not splittable since the only splitting stars are~$ \{-1\} $ and~$ \{2\} $.\\
\end{EX}

\begin{EX}\label{ex:star-finite}
	Let $ S $ be an infinite graph-theoretical star. Then the edge tree set $ \tau(S) $ is a tree set which is chain-complete and splittable with finite $ C(s,s') $ for all $ s,s'\in\tau(S) $ but which contains a regular infinite splitting star.\\
\end{EX}

\begin{EX}\label{ex:branch-bounded}
Let $ B $ be the tree set with ground set $ \vm,\mv $ and $ \vs_n,\sv_n,\vt_n $ and $ \tv_n $ for every $ n\in\N $, with the following relations:
\begin{enumerate}
\item $ \vm\ge\vs_n,\vt_n $ and $ \mv\le\sv_n,\tv_n $ for all $ n\in\N $,
\item $ \vs_i\le\vs_j $ and $ \sv_i\ge\sv_j $ if and only if $ i\le j $,
\item $ \vt_i\le\tv_j $ if and only if $ i\ne j $,
\item $ \vs_i\le\tv_j $ and $ \sv_i\ge\vt_j $ if and only if $ i\le j $,
\item $ \vs_i\ge\vt_j $ and $ \sv_i\le\tv_j $ if and only if~$ i\ge j $ and $ i\ne j $.
\end{enumerate}
Then $ B $ is a tree set which is chain-complete and splittable with no infinite splitting star, but $ C(s_1,m) $ is infinite despite $ s_1 $ and $ m $ being regular.
\end{EX}

\section{Representing profinite tree sets}\label{sec:representations}

In~\cite{TreeSets} Diestel explored the various ways in which a tree set can be represented by a nested system of bipartitions of some groundset. Typically the groundset used to represent a tree set $ \tau $ is the set $ \vO(\tau) $ of consistent orientations of $ \tau $, or a suitable subset of $ \vO(\tau) $. The representation comes in the form of a map $ \ph\colon\tau\to\mathcal{B}(\vO(\tau)) $, where $ \mathcal{B}(X) $ denotes the set of (non-trivial) oriented bipartitions of $ X $, such that $ \ph $ is an isomorphism of tree sets between $ \tau $ and its image. As $ \mathcal{B}(X) $ is a regular separation system only regular tree sets can be represented by such bipartitions.

Diestel proved that the set of {\em directed} consistent orientations of a tree set can be used to represent that tree set, where an orientation $ O $ is {\em directed} if $ O $ is a directed set. A tree set is {\em ever-branching} if it contains no inclusion-maximal proper star of order $ 2 $.

\begin{THM}[\cite{TreeSets}]\label{thm:directed}
Let $ \tau $ be an ever-branching regular tree set, $ \tilde{\vO}=\tilde{\vO}(\tau) $ the set of all directed consistent orientations of $ \tau $, and $ \ph\colon\tau\to\mathcal{B}(\tilde{\vO}) $ the map
	\[ \ph(\vs):=\braces{\tilde{\vO}(\sv),\tilde{\vO}(\vs)}, \]
	where $ \tilde{\vO}(\vs):=\{O\in\tilde{\vO}\mid\vs\in O\} $. Then $ \ph $ is an isomorphism of tree sets between $ \tau $ and its image in $ \B(\tilde{\vO}) $.\qed
\end{THM}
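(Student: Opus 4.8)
The plan is to exhibit $\ph$ as a bijective homomorphism of separation systems onto its image and then to let Lemma~\ref{Isomorphism} do the rest. Indeed $\B(\tilde\vO)$ is a regular separation system, so its sub-system $\ph(\tau)$ is regular, while $\tau$, being a tree set, is nested; hence as soon as we know that $\ph\colon\tau\to\ph(\tau)$ is a bijective homomorphism of separation systems, Lemma~\ref{Isomorphism} yields at once that it is an isomorphism of tree sets, and we need not verify the order-preservation of $\ph^{-1}$ by hand.

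Two of the three homomorphism requirements are routine. The map $\ph$ commutes with the involutions because interchanging $\vs$ and $\sv$ interchanges the two sides of the bipartition $\ph(\vs)$. It respects the order because consistent orientations are up-closed: if $\vr\le\vs$ and $\vr\in O$ for a consistent orientation $O$, then $\vs\in O$, so $\tilde\vO(\vr)\sube\tilde\vO(\vs)$ and thus $\ph(\vr)\le\ph(\vs)$. What is left is to see that each $\ph(\vs)$ really is a proper bipartition in $\B(\tilde\vO)$, i.e.\ that both $\tilde\vO(\vs)$ and $\tilde\vO(\sv)$ are non-empty, and that $\ph$ is injective. Both reduce to a single existence statement about directed orientations.

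That statement, the crux of the proof, is the following: for every pair $\vr\lne\vs$ in $\tau$ there is a \emph{directed} consistent orientation $O$ with $\vs\in O$ but $\vr\notin O$. Granting it, injectivity follows: for distinct oriented $\vr,\vs$ with $r\ne s$ nestedness makes their orientations comparable, and the crux then produces a directed orientation witnessing $\tilde\vO(\vr)\ne\tilde\vO(\vs)$; while for $\vs=\rv$ the disjoint sets $\tilde\vO(\vr),\tilde\vO(\rv)$ are both non-empty, so again $\ph(\vr)\ne\ph(\vs)$. Non-emptiness of the two sides of $\ph(\vs)$ is the same kind of statement, using that a regular $\tau$ has no small and hence no co-small separations, so that $s$ admits separations both above and below it to which the crux can be applied.

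The crux is where the hypotheses bite, and I expect it to be the main obstacle. Starting from $\vr\lne\vs$ one first checks that $\{\vs,\rv\}$ is a consistent partial orientation none of whose elements is co-trivial (regularity forbids small, hence co-small and co-trivial, separations), so the Extension Lemma (Lemma~\ref{lem:extension}) extends it to a consistent orientation. The difficulty is to secure a \emph{directed} such orientation, one whose elements have pairwise upper bounds inside it; this is exactly where ever-branching is needed. A non-directed consistent orientation is one that ``splits'' at a maximal star of order at least two, and ever-branching, forbidding inclusion-maximal proper stars of order $2$, is what guarantees that whenever the orientation built so far would stall at such a maximal star one may route it strictly further up through a branch. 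Iterating this — formally, choosing a maximal chain above $\vs$ running through the branching stars, or taking a Zorn's-lemma maximal extension — should produce a genuinely directed orientation, a leaf or an end, still containing $\rv$ and $\vs$. The example of a double ray, whose only directed orientations are its two ends and which fails to be ever-branching, shows that this step genuinely uses ever-branching: there two comparable separations cannot be separated by directed orientations. Verifying that the routing step is always available and yields a consistent, up-directed orientation is the heart of the matter; once the crux is in hand, Lemma~\ref{Isomorphism} completes the proof.
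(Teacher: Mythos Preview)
The paper does not prove this theorem: it is quoted from~\cite{TreeSets} and stated without proof, so there is no in-paper argument to compare your proposal against. Your overall architecture---check that $\ph$ is a homomorphism, establish bijectivity, and invoke Lemma~\ref{Isomorphism}---is nonetheless sound and mirrors the paper's own proof of the closely related Theorem~\ref{thm:representclosed}.

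There is, however, a systematic directional error running through your argument. Consistent orientations are \emph{down}-closed, not up-closed: if $\vr\le\vs$ and $\vs\in O$ then $\vr\in O$, since otherwise $\rv,\vs\in O$ with $\sv\le\rv$ would point away from each other. Hence for $\vr\lne\vs$ one has $\tilde\vO(\vs)\sube\tilde\vO(\vr)$, and an orientation witnessing $\ph(\vr)\ne\ph(\vs)$ must contain $\vr$ and $\sv$, not $\vs$ and $\rv$. Indeed $\{\vs,\rv\}$ is \emph{inconsistent} (as $\sv\le\rv$), so Lemma~\ref{lem:extension} cannot even be applied to it; the correct seed is the star $\{\vr,\sv\}$, and your ``crux'' must be reformulated accordingly. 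A smaller gap: your non-emptiness argument for $\tilde\vO(\vs)$ presumes that every $s$ has separations strictly above and below it, but regularity alone does not ensure this---a maximal element of $\tau$ need not be co-small. One should instead argue directly that the singleton $\{\vs\}$ extends to a directed orientation, which is again where ever-branching enters rather than a reduction to the crux as stated.
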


In the remainder of this section we will show that every profinite tree set without a splitting two-star fulfills the assumptions of Theorem~\ref{thm:directed}, i.e. that it is ever-branching. Following that we shall use our insights from Section~\ref{sec:proof} to show that every regular profinite tree set can be represented by the bipartitions of its closed consistent orientations.

The first part of this is straightforward:

\begin{LEM}\label{lem:everbranching}
	A splittable tree set is ever-branching if and only if it has no splitting two-star.
\end{LEM}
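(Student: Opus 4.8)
The plan is to prove the two implications separately, with splittability needed only for the ``if'' direction. The key preliminary fact is that every splitting star $\sigma$ of a tree set is inclusion-maximal among proper stars: by Lemma~\ref{Remark8} it is a proper star, and if $O$ is the consistent orientation with $O\subseteq\dcl(\sigma)$ and $\sigma=\max O$, then a strictly larger proper star $\sigma\cup\{\vx\}$ (with $x$ not an unoriented separation of $\sigma$) is impossible: if $\vx\in O$ then $\vx\le\vy$ for some $\vy\in\sigma$ while the star property gives $\vx\le\yv$, making $\vx$ trivial; and if $\xv\in O$ then $\xv\le\vy$ for some $\vy\in\sigma$, which together with $\vy\le\xv$ forces $\xv=\vy$ and so $x\in\sigma$. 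Granting this, the ``only if'' direction is immediate: a splitting two-star is an inclusion-maximal proper star of order two and hence witnesses that $\tau$ is not ever-branching; note that splittability is not used here.

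For the ``if'' direction I would argue the contrapositive. Assume $\tau$ is splittable but not ever-branching and fix an inclusion-maximal proper star $\{\vr,\vs\}$ of order two; the aim is to produce a splitting two-star. From $\vr\le\sv$ with $r\neq s$ we get $\vr\lne\sv$, so splittability yields a splitting star $\sigma'$ with $\va,\bv\in\sigma'$ and $\vr\le\va\lne\vb\le\sv$; in particular $\vr\le\va$ and $\vs\le\bv$. I claim $\sigma'=\{\va,\bv\}$, which is then the splitting two-star sought. If some $\vc\in\sigma'\setminus\{\va,\bv\}$ existed, the star inequalities of $\sigma'$ would give $\vr\le\va\le\cv$ and $\vc\le\vb\le\sv$, hence $\vr\le\cv$ and $\vs\le\cv$, so that $\{\vr,\vs,\vc\}$ satisfies every star inequality. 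When $c\notin\{r,s\}$ this is a proper star properly containing $\{\vr,\vs\}$, contradicting its inclusion-maximality.

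I expect the main obstacle to be the case where the third element $\vc$ coincides, as an unoriented separation, with $r$ or $s$, so that $\{\vr,\vs,\vc\}$ is not a legitimate enlargement and the contradiction must be extracted by hand. If $c=r$, comparing $\vr\le\va$ with the star inequality $\vc\le\av$ makes $\vr$ trivial or forces the degeneracy $\{\va,\av\}\subseteq\sigma'$. If $c=s$, then $\vc=\vs$ yields $\vs\le\vb$, so $\{\vr,\vs,\bv\}$ is a proper star properly extending $\{\vr,\vs\}$ once one checks $b\notin\{r,s\}$ by a short triviality/degeneracy argument, while $\vc=\sv$ forces $\vb=\sv=\vc$ and hence the degeneracy $\{\vb,\bv\}\subseteq\sigma'$. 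Each case contradicts either the inclusion-maximality of $\{\vr,\vs\}$ or that $\sigma'$ is a proper star in a tree set, so $\sigma'$ is a two-star and the proof is complete. This orientation bookkeeping is the only delicate part; Example~\ref{ex:splittable} confirms that splittability cannot be dropped, since there a non-splittable tree set has an inclusion-maximal proper two-star yet no splitting two-star.
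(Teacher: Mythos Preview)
Your proof is correct and follows essentially the same idea as the paper: apply splittability to the pair $\vr\lne\sv$ to obtain a splitting star $\sigma'$ sitting between $\vr$ and $\sv$, and use any third element of~$\sigma'$ to enlarge the two-star.

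The only real difference is packaging. The paper argues the backward direction directly: given \emph{any} proper two-star $\{\vr,\vs\}$, the splitting star $\sigma'$ is not a two-star by hypothesis, so replacing $\va,\bv$ in $\sigma'$ by $\vr,\vs$ produces a strictly larger proper star; hence no two-star is inclusion-maximal. You instead run the contrapositive: start from an inclusion-maximal two-star and conclude that $\sigma'$ must itself be a two-star. This forces you into the case analysis on $c\in\{r,s\}$, which the paper's direct formulation simply sidesteps. Your forward direction is also more explicit than the paper's, which asserts without proof that a splitting two-star is inclusion-maximal; your verification of this via the defining orientation~$O$ is a genuine addition. So your argument is slightly longer but also slightly more self-contained.
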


\begin{proof}
	For the forward direction note that every splitting two-star of a tree set witnesses that that tree set is not ever-branching.
	
	For the backward direction, let $ \tau $ be a splittable tree set with no splitting two star and $ \{\vr,\vs\} $ a two-star in $ \tau $ with $ r\ne s $. As $ \tau $ is splittable there is a splitting star $ \sigma $ of $ \tau $ that contains separations $ \vrdash $ and $ \vsdash $ with $ \vr\le\vrdash\lne\svdash\le\sv $. By assumption~$ \sigma $ is not a two-star. Hence replacing $ \vrdash $ and $ \vsdash $ in $ \sigma $ with $ \vr $ and $ \vs $ yields a proper star which includes $ \{\vr,\vs\} $ as a proper subset, showing that $ \tau $ is ever-branching.
\end{proof}

As every profinite tree set is splittable by Theorem~\ref{thm:profinitechar} this shows that every regular profinite tree set without a splitting two-star can be represented as in Theorem~\ref{thm:directed}.

In fact for profinite tree sets the directed orientations have a very simple description:

\begin{LEM}\label{lem:directedsplitting}
	In a chain-complete tree set every directed consistent orientation has a greatest element.
\end{LEM}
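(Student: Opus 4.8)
The plan is to produce the greatest element as a \emph{maximal} element of $O$ obtained from Zorn's lemma, and then to use directedness to upgrade `maximal' to `greatest'. The upgrade step is immediate: if $\vm$ is maximal in $O$ and $\vt\in O$ is arbitrary, directedness yields $\vu\in O$ with $\vm,\vt\le\vu$, whence $\vu=\vm$ by maximality and so $\vt\le\vm$; thus a maximal element of a directed poset is automatically its greatest element. The real content is therefore to guarantee that Zorn's lemma applies to $O$, i.e.\ that every chain in $O$ has an upper bound \emph{in $O$}. Since $\tau$ is chain-complete, every chain $C\sube O$ has a supremum $\vs$ in $\tau$; the key claim I would establish is that this $\vs$ in fact lies in $O$. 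Granting the claim, $O$ is itself a chain-complete poset, Zorn's lemma supplies a maximal element, and the observation above makes it the greatest element of $O$.

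To prove the claim I would fix a chain $C\sube O$ and set $\vs=\sup C$ in $\tau$. If $C$ has a maximum then $\vs\in C\sube O$ and there is nothing to do, so I may assume $C$ has no maximum; then $\vs\notin C$ and every $\vt\in C$ satisfies $\vt\lne\vs$. Suppose for contradiction that $\vs\notin O$, that is, $\sv\in O$. Fixing any $\vt_0\in C$ and applying directedness to the pair $\vt_0,\sv\in O$ yields some $\vu\in O$ with $\vt_0\le\vu$ and $\sv\le\vu$.

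The heart of the argument is to show that this single $\vu$ is an upper bound for the \emph{whole} chain $C$. For $\vt\in C$ with $\vt\ge\vt_0$, nestedness of $\tau$ gives one of the relations $\vt\le\vu$, $\vu\le\vt$, $\vt\le\uv$ or $\uv\le\vt$. The relation $\uv\le\vt$ contradicts consistency of $O$ directly, as it exhibits two elements $\vu,\vt\in O$ pointing away from each other. The relation $\vu\le\vt$ gives $\sv\le\vu\le\vt\le\vs$, so $\sv\le\vt$ and (from $\vt\le\vs$) also $\sv\le\tv$, making $\sv$ trivial with witness $t$; and $\vt\le\uv$ gives $\vt_0\le\vt\le\uv$, so together with $\vt_0\le\vu$ it makes $\vt_0$ trivial with witness $u$. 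Both are impossible in a tree set. Hence $\vt\le\vu$ in every case, so $\vu$ bounds $C$ and therefore $\vu\ge\sup C=\vs$. But then $\vu\ge\vs$ together with $\vu\ge\sv$ yields $\uv\le\vs,\sv$, so $\uv$ is trivial with witness $s$ -- again impossible. (The degenerate sub-cases, where the two separations compared happen to coincide, are dealt with by noting that they would force $\vs$ or $\vt_0$ to be small, and then $\vt\lne\vs$ for $\vt\in C$ produces a trivial separation as before.) This contradiction shows $\vs\in O$ and establishes the claim.

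The main obstacle is precisely this claim. Directedness only furnishes common upper bounds for \emph{finite} subsets of $O$, so it is not at all clear a priori that the supremum of an infinite chain remains inside $O$; indeed for orientations that are consistent but not directed this can fail. The work lies in leveraging the tree-set axioms -- nestedness together with the absence of trivial and degenerate separations -- in combination with consistency, to show that a bound obtained for a single pair $\{\vt_0,\sv\}$ automatically dominates the entire chain. Once that propagation is secured, the Zorn-plus-directedness wrapper is routine.
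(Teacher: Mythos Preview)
Your proof is correct and essentially the same approach as the paper's. Both arguments pick a chain in $O$, use directedness once to produce a common upper bound $\vu\in O$ for a fixed chain element and one ``bad'' separation, and then run the identical four-way nestedness case analysis (ruling out three cases via consistency or triviality) to propagate $\vu$ as a bound for the whole chain. The only difference is organizational: the paper takes a single \emph{maximal} chain $C$ in $O$ from the outset and derives a contradiction to its maximality, whereas you first show that $O$ is chain-complete (suprema of chains stay in $O$), then invoke Zorn, then upgrade maximal to greatest via directedness; the core propagation step and the edge-case handling are the same.
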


\begin{proof}
	Let $ \tau $ be a chain-complete tree set, $ O $ a consistent directed orientation of~$ \tau $, and $ C $ a maximal chain in $ O $. The claim follows from the chain-completeness of $ \tau $ if $ O $ is the down-closure of $ C $. Suppose there is a $ \vs\in O $ with $ \vs\not\le\vt $ for every $ \vt\in C $. As $ C $ is maximal and $ O $ is consistent this means $ \vs\lne\tv $ for all $ \vt\in C $. Fix some $ \vt\in C $. As $ O $ is directed there exists $ \vr\in O $ with $ \vs,\vt\le\vr $. Then $ \vr\notin C $ by the choice of $ \vs $. In particular $ r\ne t $. Consider some $ \vu\in C $ with $ \vt\le\vu $. If $ \vr\le\vu $ then $ \vs $ would be trivial with witness $ r $; if $ \vr\le\uv $ then $ \vt $ would be trivial with witness $ r $; and finally $ \vr\ge\uv $ would contradict the consistency of $ O $. Therefore $ u $ and $ r $ must be related as $ \vr\ge\vu $. As this holds for every $ \vu\in C $ with $ \vt\le\vu $ we know that $ \vr $ is an upper bound for $ C $, contradicting the maximality of $ C $.
\end{proof}

Clearly the converse of Lemma~\ref{lem:directedsplitting} holds as well: every orientation of a tree set that has a greatest element is directed as witnessed by that element. Therefore we have established the following theorem, which is essentially a re-formulated special case of Theorem~\ref{thm:directed}:

\begin{THM}\label{thm:leaves}
	Let $ \tau $ be a regular profinite tree set with no splitting two-star, $ \vO'=\vO'(\tau) $ the set of all consistent orientations of $ \tau $ that have a greatest element, and $ \ph\colon\tau\to\mathcal{B}(\vO') $ the map
	\[ \ph(\vs):=\braces{\vO'(\sv),\vO'(\vs)}, \]
	where $ \vO'(\vs):=\menge{O\in\vO'\mid\vs\in O} $. Then $ \ph $ is an isomorphism of tree sets between $ \tau $ and its image in $ \B(\vO') $.\qed
\end{THM}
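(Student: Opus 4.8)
The plan is to deduce this directly from Theorem~\ref{thm:directed}, since the two statements differ only in the family of orientations used to build the groundset: Theorem~\ref{thm:directed} uses the directed consistent orientations $ \tilde{\vO} $, whereas here we use the consistent orientations $ \vO' $ that possess a greatest element. First I would verify that $ \tau $ satisfies the hypotheses of Theorem~\ref{thm:directed}, and then I would show that $ \vO'=\tilde{\vO} $; the map $ \ph $ in the present statement is then literally the map of Theorem~\ref{thm:directed}, so the claim follows at once.

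For the hypotheses: being regular and profinite, $ \tau $ is chain-complete and splittable by Theorem~\ref{thm:profinitechar}. Since $ \tau $ has no splitting two-star, Lemma~\ref{lem:everbranching} makes it ever-branching. Thus $ \tau $ is an ever-branching regular tree set, so Theorem~\ref{thm:directed} applies and tells us that the map $ \vs\mapsto\braces{\tilde{\vO}(\sv),\tilde{\vO}(\vs)} $ is an isomorphism of $ \tau $ onto its image in $ \B(\tilde{\vO}) $.

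It then remains to identify $ \vO' $ with $ \tilde{\vO} $. For one inclusion, every directed consistent orientation of the chain-complete tree set $ \tau $ has a greatest element by Lemma~\ref{lem:directedsplitting}, so $ \tilde{\vO}\sube\vO' $. For the converse, any consistent orientation with a greatest element is directed, as witnessed by that element, giving $ \vO'\sube\tilde{\vO} $. Hence $ \vO'=\tilde{\vO} $, and consequently $ \vO'(\vs)=\tilde{\vO}(\vs) $ for every $ \vs\in\tau $; the map $ \ph $ in the statement therefore coincides with the map of Theorem~\ref{thm:directed} and is an isomorphism of tree sets between $ \tau $ and its image in $ \B(\vO') $.

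The argument is essentially bookkeeping, and there is no serious obstacle, as the theorem is a reformulation of Theorem~\ref{thm:directed}. The only point requiring care is the correct translation of the hypotheses: one must ensure that chain-completeness (needed to invoke Lemma~\ref{lem:directedsplitting}) and splittability (needed to pass from the absence of a splitting two-star to ever-branchingness via Lemma~\ref{lem:everbranching}) are both supplied by the profiniteness of $ \tau $ through Theorem~\ref{thm:profinitechar}.
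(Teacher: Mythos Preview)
Your proposal is correct and follows essentially the same approach as the paper: the paper also derives the result directly from Theorem~\ref{thm:directed} by using Lemma~\ref{lem:everbranching} (together with splittability from Theorem~\ref{thm:profinitechar}) to verify ever-branchingness, and Lemma~\ref{lem:directedsplitting} (together with chain-completeness and the trivial converse) to identify $\tilde{\vO}$ with $\vO'$.
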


For regular profinite tree sets that do contain splitting two-stars we can still prove a succinct representation theorem. To do this we use the set $ \vOc(\tau) $ of all \textit{splitting} consistent orientations as a groundset rather than all directed orientations. For any splitting two-star the corresponding consistent orientation is then contained in $ \vOc(\tau) $ and can be used to distinguish the two elements of that star.

\begin{THM}\label{thm:representclosed}
	Let $ \tau $ be a regular profinite tree set, $ \vOc=\vOc(\tau) $ the set of all splitting consistent orientations of $ \tau $, and $ \ph\colon\tau\to\mathcal{B}(\vOc) $ the map
	\[ \ph(\vs):=\braces{\vOc(\sv),\vOc(\vs)}, \]
	where $ \vOc(\vs):=\menge{O\in\vOc\mid\vs\in O} $. Then $ \ph $ is an isomorphism of tree sets between $ \tau $ and its image in $ \B(\vOc) $.
\end{THM}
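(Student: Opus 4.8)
The plan is to verify the hypotheses of Lemma~\ref{Isomorphism} for the map $\ph\colon\tau\to\B(\vOc)$. Since $\tau$ is a regular tree set, hence nested, and $\B(\vOc)$ is a regular separation system, it will suffice to show that $\ph$ is a well-defined bijective homomorphism of separation systems onto its image. That $\ph$ commutes with the involution is immediate, as $\ph(\sv)=\braces{\vOc(\vs),\vOc(\sv)}=\ph(\vs)^*$. That it respects the ordering follows from the fact that consistent orientations of a tree set are down-closed: if $\vr\le\vs$ then every $O\in\vOc$ containing $\vs$ also contains $\vr$, so $\vOc(\vs)\sube\vOc(\vr)$, and dually $\sv\le\rv$ gives $\vOc(\rv)\sube\vOc(\sv)$; together these yield $\ph(\vr)\le\ph(\vs)$. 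The image $\ph(\tau)$ is then a nested (Lemma~\ref{beforeIso}) and regular (as a subsystem of the regular $\B(\vOc)$) separation system, so once $\ph$ is shown to be an injection that actually lands in $\B(\vOc)$, the conclusion follows from Lemma~\ref{Isomorphism}.

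The heart of the argument is therefore to show that $\vOc$ is rich enough, and I would split this into two claims. The first is that every $\vs\in\tau$ lies in some splitting consistent orientation, which guarantees that both $\vOc(\vs)$ and $\vOc(\sv)$ are non-empty, so that $\ph(\vs)$ is a genuine (non-trivial) bipartition and $\ph$ really maps into $\B(\vOc)$. If $\vs$ is not maximal, I would pick $\vt$ with $\vs\lne\vt$ and invoke the splittability of $\tau$ (Proposition~\ref{prop:splittable}, which applies since $\tau$ is profinite, by Theorem~\ref{thm:profinitechar}) to obtain a splitting star $\sigma$ with $\vrdash,\svdash\in\sigma$ and $\vs\le\vrdash\lne\vsdash\le\vt$; the splitting orientation $O$ associated with $\sigma$ then contains $\vs$ by down-closure, since $\vs\le\vrdash\in\sigma$. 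If $\vs$ is maximal, then $\sv$ is minimal, and I would use the Extension Lemma (Lemma~\ref{lem:extension}) to extend $\dcl(\vs)$ to a consistent orientation $O$ with $\vs$ maximal: for any $\vt\in O$ with $\vt\not\le\vs$, nestedness and consistency force $\vt\lne\sv$, which is impossible as $\sv$ is minimal, so every element of $O$ lies below $\vs$ and $O$ splits at $\{\vs\}$.

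The second claim is that splitting orientations separate distinct separations. For $\vr\lne\vs$, splittability produces a splitting star $\sigma$ with $\vrdash,\svdash\in\sigma$ and $\vr\le\vrdash\lne\vsdash\le\vs$; its splitting orientation $O$ then contains $\vr$ (because $\vr\le\vrdash\in\sigma$) and contains $\sv$ (because $\sv\le\svdash\in\sigma$), so $O\in\vOc(\vr)\sm\vOc(\vs)$ and $\ph(\vr)\ne\ph(\vs)$; the case $\vs\lne\vr$ is symmetric. The remaining cases of injectivity are handled by down-closure alone: if $\vr\le\sv$ then any $O\in\vOc$ containing $\vr$ also contains $\sv$, so $\vOc(\vr)\cap\vOc(\vs)=\es$, and non-emptiness of $\vOc(\vr)$ from the first claim already gives $\ph(\vr)\ne\ph(\vs)$; finally $\ph(\vr)=\ph(\rv)$ is impossible since $\vOc(\vr)$ and $\vOc(\rv)$ partition the non-empty set $\vOc$ and hence cannot coincide. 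Together these establish that $\ph$ is an injective homomorphism into $\B(\vOc)$, and Lemma~\ref{Isomorphism} finishes the proof.

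I expect the main obstacle to be the richness of $\vOc$ at the extremal separations. Splittability directly supplies the separating splitting orientations for all strictly-ordered pairs and places every \emph{non-maximal} separation into a splitting orientation, but the maximal (equivalently, minimal) separations are not reached by splittability and must be inserted into a splitting orientation by hand; this is precisely where the Extension Lemma, together with the observation that $\vs$ maximal forces $\sv$ minimal, does the work. Some care is needed to confirm that the orientation produced for a maximal $\vs$ is genuinely \emph{splitting} rather than merely consistent, and this is the step most likely to require scrutiny, since it is the only place where one cannot simply appeal to the splitting stars guaranteed by splittability.
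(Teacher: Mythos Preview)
Your overall strategy matches the paper's: both verify the hypotheses of Lemma~\ref{Isomorphism} by checking that $\ph$ is a well-defined injective homomorphism into the regular system $\B(\vOc)$. There is, however, a genuine slip in your injectivity argument. You write that ``if $\vr\le\sv$ then any $O\in\vOc$ containing $\vr$ also contains $\sv$'', but down-closure of consistent orientations goes the other way: from $\vu\le\vr$ and $\vr\in O$ one obtains $\vu\in O$, not from $\vr\le\vu$. The argument you give is correct for the `pointing away' relation $\sv\le\vr$, not for the `pointing towards' relation $\vr\le\sv$ that you name; the latter case is therefore missing. A concrete counterexample: in the edge tree set of the two-edge star with centre $c$ and leaves $a,b$, setting $\vr=(a,c)$ and $\vs=(b,c)$ gives $\vr\le\sv$, yet the splitting orientation towards~$c$ contains both $\vr$ and $\vs$, so $\vOc(\vr)\cap\vOc(\vs)\ne\es$. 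The repair is immediate and is exactly what the paper does: for $\vr\le\sv$ one has $\vs\le\rv$, so any $O\in\vOc(\rv)$ contains $\vs$ by down-closure, giving $O\in\vOc(\vs)\setminus\vOc(\vr)$ and hence $\ph(\vr)\ne\ph(\vs)$.

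On the non-emptiness of $\vOc(\vs)$ your two-case argument (splittability for non-maximal $\vs$, the Extension Lemma for maximal $\vs$) is correct. The paper instead gives a uniform argument avoiding the case split: take a maximal chain $C\ni\vs$, extend it to a consistent orientation $O$ by Lemma~\ref{lem:extension}, and use chain-completeness (via Theorem~\ref{thm:profinitecharregular}) together with the maximality of $C$ to see that $C$ has a greatest element $\vm$, which is then forced to be greatest in all of $O$. This is shorter, though your route has the minor virtue of not invoking chain-completeness explicitly.
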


\begin{proof}
	We check the assumptions of Lemma~\ref{Isomorphism}.
	
	To see that $ \vOc(\vs) $ is non-emtpy for $ \vs\in\tau $, let $ C $ be a maximal chain in $ \tau $ containing $ \vs $. By Lemma~\ref{lem:extension} $ C $ extends to a consistent orientation~$ O $ of $ \tau $. As $ \tau $ is chain-complete by Theorem~\ref{thm:profinitecharregular} and $ C $ was chosen maximal $ C $ must have a greatest element $ \vm\in C $. This $ \vm $ is in fact even the greatest element of $ O $: for any $ \vr\in O $ either $ \vr\le\vm $ or $ \rv\ge\vm $ by consistency, but the latter case would contradict the maximality of $ C $. Therefore $ O $ is splitting and hence~$ O\in\vOc(\vs) $.
	
	Additionally $ \vOc=\vOc(\vs)\dot{\cup}\vOc(\sv) $ for each $ \vs\in\tau $, so $ \ph $ is indeed a map into~$ \mathcal{B}(\vOc) $.
	
	Clearly $ \ph $ commutes with the involution. It is also order-preserving: for $ \vr,\vs\in\tau $ with $ \vr\le\vs $ we have $ \vOc(\vs)\subseteq\vOc(\vr) $ by consistency and hence $ \ph(\vr)\le\ph(\vs) $. This shows that $ \ph $ is a homomorphism.
	
	It remains to prove that $ \ph $ is injective. For this consider $ \vr,\vs\in\tau $. If $ r=s $ then either $ \vr=\vs $, in which case there is nothing to show, or $ \vr=\sv $, in which case every orientation in $ \vOc(\vr) $ does not contain $ \vs $ and hence witnesses $ \ph(\vr)\ne\ph(\vs) $. Thus we may assume that $ r\ne s $. If $ \vr $ and $ \vs $ point away from each other then every orientation in $ \vOc(\vr) $ does not contain $ \vs $ by consistency and thus witnesses $ \ph(\vr)\ne\ph(\vs) $. If $ \vr $ and $ \vs $ point towards each other then every orientation in $ \vOc(\rv) $ contains $ \vs $ but not $ \vr $. Finally if $ \vr $ and $ \vs $ are comparable, say $ \vr\le\vs $, then by the splittability of $ \tau $ there is a splitting star $ \sigma $ of $ \tau $ such that $ \vr $ and $ \sv $ lie below different elements of $ \sigma $. The orientation of $ \tau $ induced by $ \sigma $ then contains $ \vr $ but not $ \vs $, witnessing that $ \ph(\vr)\ne\ph(\vs) $ and concluding the proof.
\end{proof}

We conclude with two remarks on Theorem~\ref{thm:representclosed}. First, by~\cite[Theorem~7.4]{ProfiniteASS} the splitting orientations of a regular tree set $ \tau $ are precisely those consistent orientations of $ \tau $ that are closed as a set in the inverse limit topology of $ \tau $; see~\cite[Section~7]{ProfiniteASS} for more. Thus in Theorem~\ref{thm:representclosed} we could equivalently have used the set of all closed consistent orientations as a groundset.

Finally, one can prove Theorem~\ref{thm:representclosed} with a slightly smaller groundset, namely without using splitting orientations that have three or more maximal elements: throughout the proof we have only used orientations with a greatest element, with the exception of the very last step. Given $ \vr,\vs\in\tau $ with $ \vr\lne\vs $ we find a splitting star $ \sigma $ with $ \vr $ and $ \sv $ below distinct elements of $ \sigma $. If $ \sigma $ is a two-star then its corresponding orientation of $ \tau $ contains $ \vr $ but not $ \vs $; and if $ \sigma $ has size three or greater it contains a separation $ \vt $ with $ \vr,\sv\le\vt $, in which case any orientation with a greatest element containing $ \tv $ also contains $ \vr $ and $ \sv $.

\bibliographystyle{plain}

\end{document}